\theoremstyle{plain}
 \def\endproof{\hspace*{\fill}\mbox{\ \rule{.1in}{.1in}}\medskip }
\newtheorem{theorem}{Theorem}[section]
\newtheorem{corollary}[theorem]{Corollary}
\newtheorem{lemma}[theorem]{Lemma}
\theoremstyle{definition}
\newtheorem{example}[theorem]{Example}
\newtheorem{remark}[theorem]{Remark}
\numberwithin{equation}{section}
\numberwithin{figure}{section}
\begin{document}

\title[Plates with incompatible prestrain]
{Plates with incompatible prestrain}
\author{Kaushik Bhattacharya, Marta Lewicka and Mathias Sch\"affner}
\address{Kaushik Bhattacharya, Division of Engineering and Applied
  Science, California Institute of Technology, Pasadena, CA 91125}
\address{Marta Lewicka, University of Pittsburgh, Department of Mathematics, 
139 University Place, Pittsburgh, PA 15260}
\address{Mathias Sch\"affner, University of W\"urzburg, Institute for
  Mathematics, Emil-Fischer-Str. 40, 97074 W\"urzburg, Germany}
\email{bhatta@caltech.edu, lewicka@pitt.edu, mathias.schaeffner@mathematik.uni-wuerzburg.de}

\date{\today}
\begin{abstract} 
We study the effective elastic behavior of incompatibly prestrained plates, 
where the prestrain is independent of thickness as well as uniform through the
thickness.  We model such plates as three-dimensional elastic  bodies 
with a prescribed pointwise stress-free state 
characterized by a Riemannian metric $G$ with the above properties,
and seek the limiting behavior as the thickness goes to zero.

We first establish that the $\Gamma$-limit is a Kirchhoff-type bending
theory when the energy per volume scales as the second power of thickness. 
We then show the somewhat surprising result that there are metrics 
which are not immersible, but have zero bending energy.  This 
implies that there are a hierarchy of plate theories for such prestrained
plates.  We characterize the non-immersible metrics that have zero bending energy
 (if and only if  the  Riemann curvatures $R^3_{112},
R^3_{221}$ and $ R_{1212}$   of $G$ do not identically vanish), 
and illustrate them with examples.  Of particular interest is an
example where $G_{2\times 2}$, the two-dimensional restriction of $G$ 
is flat, but the plate still has non-trivial energy that scales similar
to F\"oppl - von K\'arm\'an plates. Finally, we apply these results to a model of nematic glass,
including a characterization of the condition when the metric is immersible,
for $G=\mbox{Id}_3 +\gamma\vec n\otimes \vec n$ given 
in terms of the inhomogeneous unit director field 
distribution $\vec n\in\mathbb{R}^3$. 
\end{abstract}

\maketitle

\section{Introduction}

There are a number of phenomena where thin plates become prestrained
in an inhomogeneous and incompatible manner.  In such situations,
the plates become internally stressed, deform out of plane and assume
non-trivial three dimensional shapes.  Growing leaves, gels
subjected to differential swelling, electrodes in electrochemical
cells, edges of torn plastic sheets are but a few examples  
(see \cite{sharon,lemapa1} and referenced there).  It has also been
recently suggested that such incompatible prestrains may be exploited
as means of actuation of micro-mechanical devices \cite{MWB2,MWB}.

The F\"oppl - von K\'arm\'an plate theory has been widely used in the
literature to study incom\-pa\-ti\-ble pre-strained-induced bending.
It has recently been shown \cite{lemapa1} that such a theory 
arises as a Gamma-limit of the three-dimensional energy if the prestrain 
identity tends to zero together with the plate's thickness.  However,
 there are situations like those of liquid crystalline solids  where the pre-strain 
 is not small.  Therefore, a proper formulation of such problems is of interest.

A possible mathematical foundation 
relies on a model referred to as ``non-Euclidean'' theory of elasticity.
This model postulates that the three dimensional elastic body seeks to realize a configuration with 
a prescribed Riemannian metric $G$. Although there always exists a Lipschitz isometric immersion of any
$G$ \cite{gromov}, 
any such immersion is necessarily neither orientation preserving nor reversing in any neighborhood of a point
where the Riemann curvature of $G$ does not vanish (i.e. when the metric is non-Euclidean). 
Excluding such  unphysical deformations leads to 
the model potential (or elastic) energy $E$ which measures how far a
given deformation $u$ is from being an orientation preserving realization 
of $G$.   The infimum of $E$ in absence of any forces or boundary 
conditions is indeed strictly positive for any non-Euclidean $G$
\cite{lepa}, and this points to the existence of non-zero stress at
free equilibria.   Of particular interest are situations where the domain occupied by the
body is small in one dimension compared to the other two.  In such situations,
it is of interest to establish asymptotic models.

To be specific, let $\Omega\subset\mathbb{R}^2$ be an open, bounded, smooth and
simply connected set. For small $h>0$ we consider thin plates with mid-plate
$\Omega$, given by:
$$\Omega^h = \Omega\times (-\frac{h}{2}, \frac{h}{2}) = \left\{ x=
  (x', x_3);~ x'\in\Omega, ~~ |x_3|<\frac{h}{2}\right\}.$$
Let $G:\bar\Omega\rightarrow\mathbb{R}^{3\times 3}$ be  a smooth field of
symmetric positive definite matrices, so that it defines a Riemannian
metric on $\Omega^h$.
In this paper we study the situation where $G$ is
independent of the thickness as well and uniform through the
thickness.  Specifically, we assume 
\begin{equation} \label{g}
G(x', x_3)  = G(x')= [G_{ij}(x')]_{i,j=1\ldots 3} \qquad \forall (x', x_3)\in \Omega^h.
\end{equation}
We consider the following ``non-Euclidean energy'' functional:
\begin{equation}\label{energy}
E^h(u^h) = \frac{1}{h}\int_{\Omega^h} W(\nabla u^h A^{-1})~\mbox{d}x 
\qquad \forall u^h\in W^{1,2}(\Omega^h,\mathbb{R}^3),
\end{equation}
where $A$ is the positive definite symmetric square root of $G$:
$$A = \sqrt{G},$$
while $W:\mathbb{R}^{3\times 3}\longrightarrow
\bar{\mathbb{R}}_{+}$ is the elastic energy density. 
In addition to being $\mathcal{C}^2$ regular in a neighborhood of
$SO(3)$, the density $W$ is assumed to satisfy the normalization, 
frame indifference and non\-de\-ge\-ne\-racy conditions as below:
\begin{equation}\label{cond}
\begin{split}
\exists c>0\quad \forall F\in \mathbb{R}^{3\times 3} \quad
\forall R\in SO(3) \qquad
&W(R) = 0, \quad W(RF) = W(F),\\
&W(F)\geq c~ \mathrm{dist}^2(F, SO(3)).
\end{split}
\end{equation}
We are interested in understanding the limiting behavior of  $\inf E^h$ as 
$h \rightarrow 0$.

It follows from the classical work of Nash and Kuiper \cite{na1, na2},
and from Gromov's convex integration method \cite{gromov}, that any smooth metric $G_{2\times 2}$ on $\Omega$
has a $\mathcal{C}^{1,\alpha}$ (with $\alpha<1/7$) isometric immersion
in $\mathbb{R}^3$ (see also \cite{bo, del}). 
In view of the work  of Le Dret and Raoult \cite{lera}, if this immersion is smooth, we expect that 
the highest non-zero scaling of the energy $E^h$ is  $\frac{1}{h^2} E^h$ (in fact this is shown in 
Corollary 3.2 below, with the critical regularity $W^{2,2}$).  
Therefore, we focus on this  and smaller scaling of the energy.

\medskip

We explore three issues. 

First, in Part A, we establish (Theorems \ref{thm2}
and \ref{thm3}) that  the $\Gamma$-limit of $\frac{1}{h^2} E^h$
is given by a Kirchhoff bending energy functional (\ref{IG}) acting on the set of all $W^{2,2}$
realizations of $G_{2\times 2}$ in $\mathbb R^3$.   The main tool 
is the geometric rigidity theorem of Friesecke, James and M\"uller \cite{FJMhier}.
In fact, these authors used this theorem to rigorously derive the  nonlinear
Kirchhoff  bending theory as the $\Gamma$-limit of the classical Euclidean
elasticity theory ($G = \mbox{Id}_3$) under the assumption that
$E^h$ scales like $h^2$.  Lewicka and Pakzad \cite{lepa} extended their approach to 
the non-Euclidean setting
in the  particular case $G = G_{2\times 2}^* +  e_3\otimes e_3$. 
Our results extend this to the case of arbitrary $G$ that satisfy (\ref{g}).

Second, in Part B, we  show that the situation (\ref{g}) can lead to 
other non-trivial theories.  We first show (Theorems \ref{th12} and 
\ref{th23}) that the limit of the infimum of $\frac{1}{h^2}
E^h$ is non-zero if and only if the three Riemann curvatures
$R^3_{112}, R^3_{221}$ and $ R_{1212}$ of $G$ do not vanish
identically.  Therefore, there exist non-immersable metrics $G$ for 
whom the minimum of their  bending energy is zero.  
Therefore, these metrics lead to non-trivial theories at scalings 
smaller than $h^2$.   While the detailed derivation of such theories
remain open, we explore them through Examples \ref{ex1} through \ref{ex3}.
We show in Example \ref{ex1} that the infimum of $E^h$ scales as $h^4$
(Theorem \ref{optimal}).  This is reminiscent of the scaling of 
F\"oppl - von K\'arm\'an theories.  However, in this example
$G_{2\times 2}$ is flat and therefore the limiting deformation involves
no bending!    In other words, these can be regarded as a 
generalized plane stress with F\"oppl - von K\'arm\'an scaling.
Further, we show in Example \ref{ex3} that this residual energy is not 
limited to cases where $G_{2\times 2}$ is flat.  In other 
words, there are situations when one has large incompatible
pre-strain, F\"oppl - von K\'arm\'an or milder bending.
A complete characterization of these theories remain an ongoing
endeavor.

Third, in Part C, we apply our results to liquid crystal glass where 
metric $G$ of the form $\mbox{Id}_3 +\gamma \vec n\otimes
\vec n$ for unitary director field  $\vec n\in\mathbb{R}^3$ and a
constant parameter $\gamma$. 
It has recently been suggested that such metrics and the resulting
deformation be exploited as means of actuation of micro-mechanical
devices \cite{MWB2,MWB}.  We show in  
Theorem \ref{lem71} that this metric is immersible 
if and only if $\mbox{curl}^T\mbox{curl}~ (\vec n \otimes \vec n) =
0$. Further, for the general three dimensional case 
we show in Theorem \ref{th3d} that the $\Gamma$-limit energy measures the bending content of the form:
$(\mbox{Id}_2 - \tilde\gamma n\otimes n) F_{2\times 2} (\mbox{Id}_2 -
\tilde\gamma n\otimes n)$, where $n$ is the in-plane component of $\vec n$
and $\tilde \gamma$ is an explicitly given inhomogeneous parameter.

\medskip

The paper is organized as follows.  Part A consists of three sections.  
We prove the lower bound to
$\frac{1}{h^2} E^h$ in section 2 and the upper bound in section 3.
section 4 specializes the formulas of the bending energy to the isotropic case.
Part B also consists of three sections.
In section 5, we derive equivalent conditions for the 
scaling $E^h\sim h^2$ to be optimal.  We provide examples of the
non-trivial (flat or non-flat) limiting configurations in section 6.  
In section 7 we show the equivalent condition for optimality of the
scaling $E^h\sim h^4$, in a particular case pertaining to one of the examples.
The application to nematic elastomers is in section 8 which is the sole section
in Part C.

Throughout the paper, we use the following notation.
Given a matrix $F\in \mathbb{R}^{3\times 3}$ we denote its trace by:
$\mbox{tr}~ F$, its transpose by: $F^T$, its symmetric part by: $\mbox{sym}~ F =
\frac{1}{2} (F + F^T)$, and its skew part by: $\mbox{skew}~F =
F-\mbox{sym}~F$.  We shall use the matrix norm $|F|= (\mbox{tr}(F^TF))^{1/2}$, which
is induced by the inner product $\langle F_1:F_2 \rangle = \mbox{tr}(F_1^TF_2)$.
The $k\times l$ principal minor of a matrix $F\in\mathbb{R}^{3\times
  3}$ will be denoted by $F_{k\times l}$. Conversely, for a given 
$F_{k\times l}\in\mathbb{R}^{k\times l}$, the $3\times 3$ matrix with
principal minor equal $F_{k\times l}$ and all other entries equal to $0$,
will be denoted $F_{k\times l}^*$.  
All limits are taken as the thickness parameter $h$
vanishes, i.e. when  $h\to 0$.  Finally, by $C$ we denote any universal constant, independent of $h$.

\medskip

\noindent{\bf Acknowledgments.} 
M.L. was partially supported by the NSF Career grant DMS-0846996 and
by the NSF grant DMS-1406730.
K.B. was partially supported by the NSF PIRE grant OISE-0967140.
\vspace{\baselineskip}

\begin{center}
{\bf Part A: Bending limit}
\end{center}

\section{The bending energy: the lower bound} 

We begin by characterizing sequences of deformations for which $E^h \le h^2$:
\begin{theorem}\label{thm2}
For a given sequence of deformations $u^h\in W^{1,2}(\Omega^h,\mathbb{R}^3)$ satisfying:
\begin{equation}\label{en_bound} 
E^h(u^h) \leq Ch^2,
\end{equation}
where $C$ is a uniform constant, there exists a sequence of translations
$c^h\in\mathbb{R}^3$, such that the following properties hold for the
normalised deformations $y^h\in W^{1,2}(\Omega^1,\mathbb{R}^3)$:
$$y^h(x', x_3) = u^h(x', hx_3) - c^h.$$
\begin{itemize}
\item[(i)] There exists $y\in W^{2,2}(\Omega,\mathbb{R}^3)$ such that,
  up to a subsequence:
$$ y^h\to y \qquad \mbox{ strongly in } W^{1,2}(\Omega^1,\mathbb{R}^3).$$
\item[(ii)] The deformation $y$ realizes the midplate metric: 
\begin{equation}\label{cinque}
(\nabla y)^T\nabla y = G_{2\times 2}.
\end{equation}
Consequently, the unit normal $\vec N$ to the surface $y(\Omega)$ and
the Cosserat vector $\vec b$ below have the regularity $\vec N, \vec
b\in W^{1,2}\cap L^\infty (\Omega,\mathbb{R}^3)$:
\begin{equation}\label{b}
{\vec N = \frac{\partial_1 y \times\partial_2y }{|\partial_1 y
    \times\partial_2y|}} \qquad \vec b = (\nabla y) (G_{2\times
  2})^{-1}\left[\begin{array}{c}G_{13}\\G_{23}\end{array}\right]
+ \frac{\sqrt{\det G}}{\sqrt{\det G_{2\times 2}}} \vec N.
\end{equation}
\item[(iii)] Define the quadratic forms:
\begin{equation}\label{Qform}
\begin{split}
 \mathcal{Q}_3(F) = D^2 W(\mathrm{Id})(F,F),\\
 \mathcal{Q}_2(x', F_{2\times 2}) = \min\left\{ \mathcal{Q}_3(\sqrt{G(x')}^{-1}\tilde F\sqrt{G(x')}^{-1});
  ~ \tilde F\in\mathbb{R}^{3\times 3} \mbox{ with }\tilde F_{2\times
    2} = F_{2\times 2}\right\}.
\end{split}
\end{equation}
The form $\mathcal{Q}_3$ is defined for  all $F\in\mathbb{R}^{3\times 3}$, 
while $\mathcal{Q}_2(x', \cdot)$ are defined on $F_{2\times
  2}\in\mathbb{R}^{2\times 2}$. Both forms $\mathcal{Q}_3$ and all $\mathcal{Q}_2$ are nonnegative
definite and depend only on the  symmetric parts of their arguments.
We then have the lower bound:
$$\liminf_{h\to 0} \frac{1}{h^2}E^h(u^h) \geq \mathcal{I}_G(y),$$
where:
\begin{equation}\label{IG}
\mathcal{I}_G(y) = \frac{1}{24}\int_\Omega \mathcal{Q}_2
\left(x', (\nabla y)^T \nabla \vec b \right)~\mathrm{d}x'.
\end{equation}
\end{itemize}
\end{theorem}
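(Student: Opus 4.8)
The plan is to follow the now-standard compactness-and-lower-bound scheme initiated by Friesecke, James and M\"uller for the $h^2$-scaling, and adapted to the non-Euclidean setting by Lewicka and Pakzad, carefully keeping track of the general metric $G$ through the change of variables $F\mapsto\sqrt{G}^{-1}F\sqrt{G}^{-1}$. First I would rescale to the fixed domain $\Omega^1$, writing $y^h(x',x_3)=u^h(x',hx_3)-c^h$, so that the scaled gradient $\nabla_h y^h:=(\partial_1 y^h,\partial_2 y^h,\tfrac1h\partial_3 y^h)$ satisfies $\frac1{h^2}\int_{\Omega^1}W(\nabla_h y^h\, A^{-1})\le C$. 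Since $A=\sqrt{G}$ is smooth and uniformly bounded with bounded inverse, the nondegeneracy condition in (\ref{cond}) gives $\int_{\Omega^1}\mathrm{dist}^2(\nabla_h y^h, SO(3)A)\le Ch^2$, where $SO(3)A=\{RA(x'):R\in SO(3)\}$ is a compact smooth $3$-manifold of matrices. I would then invoke the geometric rigidity theorem of \cite{FJMhier} (in the quantitative form with varying target, via a partition of $\Omega^1$ into $h$-cubes or directly in the version with a smooth matrix field) to produce a rotation field $R^h\in W^{1,2}(\Omega,SO(3))$ with $\|\nabla_h y^h - R^h A\|_{L^2}\le Ch$ and $\|\nabla R^h\|_{L^2}\le C$. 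Standard arguments then give, up to subsequence, $R^h\to R$ in $L^2$ and weakly in $W^{1,2}$, $\nabla_h y^h\to RA$ in $L^2$, hence in particular $\partial_\alpha y^h$ converges and $y^h\to y$ strongly in $W^{1,2}(\Omega^1,\mathbb{R}^3)$ with $y$ independent of $x_3$ and $y\in W^{2,2}(\Omega,\mathbb{R}^3)$; since $(\nabla y)^T\nabla y=(RA)_{2\times 2}^T(RA)_{2\times 2}=A^T A|_{2\times 2}=G_{2\times2}$, this proves (i) and the first identity of (ii), and the regularity of $\vec N$ and $\vec b$ follows from $(\nabla y)^T\nabla y=G_{2\times2}$ being uniformly positive definite with $W^{2,2}$ entries. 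One also checks that $R=[\partial_1 y\,|\,\partial_2 y\,|\,\vec b]A^{-1}$, i.e. the third column of $RA$ is forced to be the Cosserat vector $\vec b$ of (\ref{b}), because $RA\in SO(3)A$ and its first two columns are $\partial_1 y,\partial_2 y$.

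Next I would set up the second-order expansion that yields the bending functional. Define $G^h=(R^h)^T\nabla_h y^h\,A^{-1}$, so that $\frac1{h^2}\int W(\nabla_h y^h A^{-1})=\frac1{h^2}\int W(R^h G^h\cdot\text{(lower order)})$; more precisely write the relative strain $G^h=\mathrm{Id}+h K^h$ with $K^h$ bounded in $L^2$ after the rigidity estimate, extract a weak limit $K^h\rightharpoonup K$, and Taylor-expand $W$ near $\mathrm{Id}$ using $\mathcal{C}^2$-regularity: $W(\mathrm{Id}+hK^h)=\tfrac{h^2}{2}D^2W(\mathrm{Id})(K^h,K^h)+o(h^2)|K^h|^2$. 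After a truncation argument (replacing $K^h$ by $K^h\mathbf 1_{\{|K^h|\le h^{-1/2}\}}$ so the remainder is controlled, as in \cite{FJMhier}), weak lower semicontinuity of the nonnegative quadratic form $\mathcal{Q}_3=D^2W(\mathrm{Id})$ gives $\liminf\frac1{h^2}E^h\ge\tfrac12\int_{\Omega^1}\mathcal{Q}_3(K)\,dx$. The decisive point, exactly as in the flat case, is to identify enough of the limiting strain $K$: from $\nabla_h y^h\to RA$ and the expansion of $\partial_3 y^h$ one shows that the $x_3$-dependent part of $K$ is affine, $K(x',x_3)=K_0(x')+x_3 K_1(x')$ for some $K_1$, and that the in-plane $2\times2$ block of $A^T R^T\partial_3(\ldots)$ relates the linear-in-$x_3$ part to the second fundamental form: concretely $\big(\mathrm{sym}\,(A^{T}K_1 A)\big)_{2\times2}= \tfrac12(\nabla y)^T\nabla\vec b$ up to the precise normalization. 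Minimizing the integrand over the unconstrained entries of $K$ (the out-of-plane row/column, and $K_0$) at each point is precisely the definition of $\mathcal{Q}_2$ in (\ref{Qform}), and integrating $\int_{-1/2}^{1/2}x_3^2\,dx_3=\tfrac1{12}$ together with the factor $\tfrac12$ produces the constant $\tfrac1{24}$, giving $\liminf\frac1{h^2}E^h\ge\frac1{24}\int_\Omega\mathcal{Q}_2(x',(\nabla y)^T\nabla\vec b)\,dx'=\mathcal{I}_G(y)$.

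The properties of the forms claimed in (iii) are easy linear algebra that I would dispatch first: $\mathcal{Q}_3\ge0$ and frame-indifference of $W$ forces $\mathcal{Q}_3$ to depend only on $\mathrm{sym}$, and since $F\mapsto\sqrt{G}^{-1}F\sqrt{G}^{-1}$ is linear and symmetric-preserving, each $\mathcal{Q}_2(x',\cdot)$ inherits nonnegativity and dependence on the symmetric part only; the minimum in the definition is attained because $\mathcal{Q}_3$ restricted to the relevant linear subspace is a nonnegative quadratic form whose kernel is handled by frame indifference. The main obstacle, and the place where the general $G$ requires genuine care beyond the flat or block-diagonal cases, is the bookkeeping in the second step: one must show that the quantity whose quadratic form is being minimized is exactly $(\nabla y)^T\nabla\vec b$ and not merely some equivalent expression, and that the conjugation by $\sqrt{G}^{-1}=A^{-1}$ in $\mathcal{Q}_2$ is the correct one given that the relative strain lives in the $A$-twisted frame $R^h A$ rather than in $SO(3)$. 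This amounts to tracking the identity $R^h=[\partial_1 y^h\,|\,\partial_2 y^h\,|\,\vec b^h]A^{-1}+o(1)$ together with the expansion of $\partial_3 y^h=\vec b^h+h x_3\,\partial_3(\text{corrector})+\ldots$ and differentiating the near-orthogonality of $R^h$; I expect the cleanest route is to mirror the computation of \cite{lepa} verbatim with $G_{2\times2}^*+e_3\otimes e_3$ replaced by the full $G$, using the Cosserat vector $\vec b$ precisely so that $[\partial_1 y\,|\,\partial_2 y\,|\,\vec b]=RA$ holds identically, which makes the general case formally identical to theirs once the right moving frame is fixed.
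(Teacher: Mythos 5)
Your proposal follows essentially the same route as the paper's proof: quantitative rigidity (the paper's Lemma~\ref{approx}, which packages the Friesecke--James--M\"uller estimate) to extract an approximating field whose projection onto $SO(3)A$ gives the rotation field $R^h$; identification of the limit $Q=[\partial_1y\,|\,\partial_2y\,|\,\vec b]$ with $QA^{-1}\in SO(3)$; passage to scaled strains $S^h=\tfrac1h((R^h)^T\nabla u^h A^{-1}-\mathrm{Id})$ with weak $L^2$ limit $\bar S$, whose $x_3$-affine structure and identity $(A\,\partial_{x_3}\bar S\,A)_{2\times 2}=(\nabla y)^T\nabla\vec b$ the paper derives via difference quotients; truncation on the good sets $\{|S^h|^2\le 1/h\}$, Taylor expansion of $W$, weak lower semicontinuity of $\mathcal Q_3$, and pointwise minimization over the unconstrained entries together with $\int_{-1/2}^{1/2}x_3^2\,dx_3=\tfrac1{12}$ to land on $\tfrac1{24}\int_\Omega\mathcal Q_2(x',(\nabla y)^T\nabla\vec b)$. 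Minor notational slips aside (e.g.\ $(RA)_{2\times2}$ should be the first two columns $(RA)_{3\times2}$, and the $\tfrac12$ you hedge about in the identification of $K_1$ is not there), this is the paper's argument.
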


\begin{remark}
Consider a particular case, when the metric $G$ in (\ref{energy}) has the
structure $G=G_{2\times 2}^*+ e_3\otimes e_3$ as in \cite{sharon,lepa}.
Then, likewise: $A=A_{2\times 2}^* + e_3\otimes e_3$, and $A^{-1}e_3 = G^{-1}e_3 = e_3.$
From the formula (\ref{b}) it follows that: $\vec b = \vec N$, and so the asymptotic expansion of approximate
minimizers of (\ref{energy}) is: $u^h(x', x_3) \approx y(x') + x_3 \vec N(x')$.
Also, directly from (\ref{Qform}) we obtain:
\begin{equation*}
\begin{split}
\mathcal{Q}_2(x', F_{2\times 2}) &= \mathcal{Q}_2^0\Big(A_{2\times 2}^{-1}(x')
F_{2\times 2} A_{2\times 2}^{-1}(x')\Big) \\ & \mbox{where }
\mathcal{Q}_2^0(F_{2\times 2})=\min\left\{\mathcal{Q}_3(\tilde F); ~ \tilde
F\in\mathbb{R}^{3\times 3} \mbox{ with } \tilde F_{2\times 2} =
F_{2\times 2}\right\}.
\end{split}
\end{equation*}
Therefore, the limiting functional has the form:
$$\mathcal{I}_G(y) = \frac{1}{24}\int_\Omega \mathcal{Q}_2^0\Big(
A_{2\times 2}^{-1}(\nabla y)^T\nabla \vec N A_{2\times
  2}^{-1}\Big)~\mbox{d}x'$$
and it depends on $y$ only through the second fundamental form
$\Pi_y = (\nabla y)^T\nabla \vec N$ of the deformed mid-plate $y(\Omega)$.
For the isotropic density $W$ (see (\ref{Qiso})), one gets:
\begin{equation}\label{zero0}
\mathcal{Q}_2^0(F_{2\times 2}) = \mathcal{Q}_{2, iso}^0(F_{2\times 2}) = 
\mu|\mbox{sym} F_{2\times 2}|^2 + \frac{\lambda\mu}{\lambda+\mu}
|\mbox{tr}F_{2\times 2}|^2.
\end{equation}
We see that we recover the results of \cite{lepa} exactly.
\end{remark}

\medskip

Before proving Theorem \ref{thm2}, 
we first state the approximation lemma from \cite{lepa}, which is just
rephrasing Theorem 10 in \cite{FJMhier} in the present non-Euclidean
elasticity context.

\begin{lemma}\label{approx}
Assume (\ref{en_bound}). There exists matrix fields $Q^h\in
W^{1,2}(\Omega, \mathbb{R}^{3\times 3})$ such that:
\begin{itemize}
\item[(i)] $\displaystyle{\frac{1}{h} \int_{\Omega^h}|\nabla u^h(x', x_3) -
  Q^h(x')|^2 ~\mathrm{d}x \leq C \left( h^2 +
    \frac{1}{h}\int_{\Omega^h}\mathrm{dist}^2(\nabla u^hA^{-1}, SO(3))~\mathrm{d}x\right)}$,
\item[(ii)] $\displaystyle{\int_{\Omega}|\nabla  Q^h(x')|^2~\mathrm{d}x' \leq C 
\left(1 +  \frac{1}{h^3}\int_{\Omega^h}\mathrm{dist}^2(\nabla u^hA^{-1}, SO(3))~\mathrm{d}x\right)}$.
\end{itemize}
\end{lemma}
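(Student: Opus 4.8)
The plan is to carry out, in the present variable-metric setting, the proof of Theorem~10 in \cite{FJMhier}, the only genuinely new point being the control of the error caused by freezing the Lipschitz metric $A=\sqrt G$ on cubes of size comparable to the thickness $h$. Let $\mathcal E:=\int_{\Omega^h}\mathrm{dist}^2(\nabla u^hA^{-1},SO(3))\,\mathrm{d}x$; by (\ref{cond}) and (\ref{en_bound}) one has $\mathcal E\le Ch^3$, although the final estimates will be kept in terms of $\mathcal E$. Cover $\bar\Omega$ by essentially disjoint squares $S_a$ of side $h$, with the standard modification near $\partial\Omega$ (squares meeting $\Omega$ only partially are treated in the same way, using that $\Omega$ is Lipschitz, exactly as in \cite{FJMhier}), set $C_a:=S_a\times(-h/2,h/2)$, and let $A_a$ be the value of $A$ at the center of $S_a$; since $A\in\mathcal C^\infty(\bar\Omega)$ is uniformly positive definite we have $|A(x')-A_a|\le Ch$ on $S_a$ and $c\,\mathrm{Id}\le A_a\le C\,\mathrm{Id}$ uniformly in $a$ and $h$.

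The first step is a cube-wise rigidity estimate. For fixed $a$, the linear change of variables $y=A_ax'$ identifies $\nabla_{x'}u^hA_a^{-1}$ with the full gradient $\nabla_y\big(u^h\circ A_a^{-1}\big)$ on the parallelepiped $A_a(C_a)$, whose diameter is comparable to $h$ and whose aspect ratio is bounded uniformly in $a,h$; the scale-invariant geometric rigidity theorem of \cite{FJMhier} therefore provides $R_a\in SO(3)$ with $\int_{C_a}|\nabla u^hA_a^{-1}-R_a|^2\,\mathrm{d}x\le C\int_{C_a}\mathrm{dist}^2(\nabla u^hA_a^{-1},SO(3))\,\mathrm{d}x$, with $C$ independent of $a,h$. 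Writing $\nabla u^hA(x')^{-1}=\hat R+\hat E$ with $\hat R\in SO(3)$ realizing the distance and using $\nabla u^hA_a^{-1}=(\nabla u^hA(x')^{-1})\big(\mathrm{Id}+(A(x')-A_a)A_a^{-1}\big)$ together with $|A(x')-A_a|\le Ch$, one gets the pointwise bound $\mathrm{dist}(\nabla u^hA_a^{-1},SO(3))\le C\big(h+\mathrm{dist}(\nabla u^hA^{-1},SO(3))\big)$ on $C_a$, hence $\int_{C_a}\mathrm{dist}^2(\nabla u^hA_a^{-1},SO(3))\le C\big(\int_{C_a}\mathrm{dist}^2(\nabla u^hA^{-1},SO(3))+h^5\big)$. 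The same manipulations on the union $C_a\cup C_b$ of two adjacent cubes yield $R_{ab}\in SO(3)$, and comparing $R_a$ and $R_b$ to $R_{ab}$ on $C_a$ and $C_b$ (passing from the reference $A_b^{-1}$ to $A_a^{-1}$ on $C_b$ at the cost of another $Ch$) gives the discrete estimate $|R_a-R_b|^2\le\frac{C}{h^3}\big(\int_{C_a\cup C_b}\mathrm{dist}^2(\nabla u^hA^{-1},SO(3))+h^5\big)$.

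The next step is to regularize: let $\bar R^h\colon\Omega\to SO(3)$ equal $R_a$ on $S_a$, let $\eta_h(z)=h^{-2}\eta(z/h)$ be a standard mollifier on $\mathbb R^2$ supported in $\{|z|\le h\}$, and define $\tilde R^h:=\bar R^h*\eta_h\in\mathcal C^\infty$ and $Q^h:=\tilde R^hA\in W^{1,2}(\Omega,\mathbb R^{3\times3})$, using the smooth field $A$. On $S_a$ the value $\tilde R^h$ is a convex combination of the $R_b$ over neighbours $b$ of $a$ (so $|\tilde R^h|\le\sqrt3$ and $|\tilde R^h-R_a|\le\max_{b\sim a}|R_b-R_a|$), and since $\int\nabla\eta_h=0$ also $|\nabla\tilde R^h|\le Ch^{-1}\max_{b\sim a}|R_b-R_a|$ there, so $\int_{S_a}|\nabla\tilde R^h|^2\le C\sum_{b\sim a}|R_b-R_a|^2$. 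Summing over $a$ (the pairs $C_a\cup C_b$ overlap a bounded number of times, and the freezing errors contribute $\sum_a\sum_{b\sim a}h^5\le Ch^3$), and using $\nabla Q^h=(\nabla\tilde R^h)A+\tilde R^h\nabla A$ with $A,\nabla A$ bounded, yields $\int_\Omega|\nabla Q^h|^2\le C+\frac{C}{h^3}\mathcal E$, which is (ii). For (i), on each $C_a$ write $\nabla u^h-Q^h=(\nabla u^hA_a^{-1}-R_a)A_a+(R_a-\tilde R^h)A_a+\tilde R^h(A_a-A)$; the three contributions over $C_a$ are bounded respectively by $C\int_{C_a}\mathrm{dist}^2(\nabla u^hA^{-1},SO(3))+Ch^5$ (cube-wise rigidity), by $Ch^3\max_{b\sim a}|R_b-R_a|^2\le C\sum_{b\sim a}\int_{C_a\cup C_b}\mathrm{dist}^2(\nabla u^hA^{-1},SO(3))+Ch^5$ (the discrete estimate), and by $Ch^2|C_a|=Ch^5$; summing over $a$ and dividing by $h$ gives $\frac1h\int_{\Omega^h}|\nabla u^h-Q^h|^2\le Ch^2+\frac{C}{h}\mathcal E$, which is (i). The only real departure from \cite{FJMhier} is the bookkeeping of these freezing errors, each of size $h^5$ per cube and $\sim h^3$ in total; they are precisely what produces the additive $h^2$ in (i) and the additive constant in (ii), and tracking them carefully — rather than any conceptual difficulty — is the main, routine, task.
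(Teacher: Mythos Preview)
Your proof is correct and follows precisely the approach the paper indicates: the paper does not give its own proof of this lemma but states it as a rephrasing of Theorem~10 in \cite{FJMhier} in the non-Euclidean context, citing \cite{lepa}, and your argument is exactly a detailed implementation of that rephrasing---cubewise rigidity after freezing $A$, discrete neighbor estimates, mollification, and bookkeeping of the $O(h^5)$-per-cube freezing errors that produce the additive $h^2$ in (i) and the additive constant in (ii). One minor notational slip: in the change of variables you write $y=A_ax'$ and $\nabla_{x'}u^h$, but since $C_a\subset\mathbb R^3$ these should be the full three-dimensional variable $x=(x',x_3)$ and gradient $\nabla_x$.
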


\medskip

\noindent {\bf Proof of Theorem \ref{thm2}.}

{\bf 1.} By Lemma \ref{approx} we see that the sequence $\{Q^h\}$ is
bounded in $L^{2}$, together with its derivatives. Therefore, up to a subsequence:
\begin{equation}\label{uno}
Q^h\rightharpoonup Q \quad \mbox{ weakly in } W^{1,2}(\Omega,
\mathbb{R}^{3\times 3}).
\end{equation}
Consider the rescaled deformations $y^h\in W^{1,2}(\Omega^1, \mathbb{R}^3)$ given by:
$$y^h(x', x_3) = u^h(x', hx_3) - \fint_{\Omega^h} u^h.  $$
Since:
$$\int_{\Omega^1} |\nabla u^h(x', hx_3) - Q(x')|^2 \leq 2\int_{\Omega^1}
|\nabla u^h(x', hx_3) - Q^h|^2 + 2\int_{\Omega} |Q^h - Q|^2,$$
it follows by Lemma \ref{approx} (i) and (\ref{uno}) that:
\begin{equation}\label{due} 
\left[\begin{array}{ccc} \partial_1y^h & \partial_2y^h &\displaystyle{\frac{\partial_3y^h}{h}}\end{array}\right]
\rightarrow Q\quad \mbox{ strongly in }
L^2(\Omega^1,\mathbb{R}^{3\times 3}).
\end{equation}
In particular, the sequence $\{\nabla y^h\}$ is bounded in $L^2$. Since $\fint y^h=0$, by the Poincar\'e
inequality, a subsequence of $\{y^h\}$ converges weakly in
$W^{1,2}(\Omega^1,\mathbb{R}^3)$ to some limiting field $y$. 
On the other hand, $\{\nabla y^h\}$ converge strongly because of
(\ref{due}):
$$\nabla_{tan}y^h\to Q_{3\times 2}  ~~~\mbox{ and } ~~~ \partial_3y^h
\to 0 \quad \mbox{ strongly in } L^2(\Omega^1).$$
Consequently, the convergence of $\{y^h\}$ is actually strong, and $y=y(x')\in
W^{2,2}(\Omega,\mathbb{R}^3)$ with:
\begin{equation}\label{tre}
\nabla y = \nabla_{tan}y = Q_{3\times 2}.
\end{equation}
We have thus proved (i) in Theorem \ref{thm2}. 

\medskip

{\bf 2.} Note that by Lemma \ref{approx} (i):
\begin{equation}\label{trea}
\begin{split}
\int_\Omega &\mbox{dist}^2(Q^hA^{-1}, SO(3))~\mbox{d}x' \\
& \leq \frac{C}{h} \left( \int_{\Omega^h}\mathrm{dist}^2(\nabla u^hA^{-1}, SO(3)) 
+ \int_{\Omega^h}|\nabla u^h(x', x_3)- Q^h(x')|^2~\mbox{d}x\right) \leq Ch^2.
\end{split}
\end{equation}
Therefore, by (\ref{uno}):
\begin{equation}\label{quattro}
QA^{-1}\in SO(3) \qquad \forall \mbox{a.e. } x'\in\Omega,
\end{equation}
so, in particular, we obtain (\ref{cinque}), and automatically:
\begin{equation}\label{reg}
\nabla y\in W^{1,2}\cap L^\infty(\Omega,\mathbb{R}^3).
\end{equation}
Further, by (\ref{cinque}) and using the formula $a\times (b\times c) = \langle a, c\rangle b
- \langle a, b\rangle c$, one gets:
\begin{equation}\label{crossp}
\begin{split}
|\partial_1y\times \partial_2y|^2 & = |Ae_1\times Ae_2|^2
= \langle Ae_1, (Ae_1\times Ae_2)\times
Ae_1\rangle \\ & = \langle Ae_2, \langle Ae_1, Ae_1\rangle
Ae_2 - \langle Ae_1, Ae_1\rangle
Ae_1\rangle \\ & =G_{11} G_{22} - G_{12}^2 = \det G_{2\times 2}.
\end{split}
\end{equation}
Hence, in view of (\ref{reg}): $\vec N\in W^{1,2}\cap L^\infty$ and,
consequently, the same holds for $\vec b$.

\medskip

{\bf 3.}  We will now prove that, assuming (\ref{tre}) and (\ref{cinque}), condition
(\ref{quattro}) is equivalent to $\vec b = Qe_3$ satisfty (\ref{b}).
Indeed, write:
$$\vec b = \alpha_1\partial_1 y + \alpha_2\partial_2y +
\alpha_3 \vec N.$$
By (\ref{crossp}), we obtain:
$$\det Q = \det \left[\begin{array}{ccc}\partial_1y & \partial_2y &
    \alpha_3\vec N \end{array}\right] =
\alpha_3|\partial_1y\times \partial_2y| = \alpha_3 \sqrt{\det G_{2\times 2}}.$$
Now, (\ref{quattro}) is equivalent to $Q^TQ=G$ and $\det Q>0$, hence
(\ref{quattro}) is further equivalent to:
\begin{equation*}
\begin{split}
G_{13} & =\langle \vec b, \partial_1y \rangle = \alpha_1 G_{11} + \alpha_2 G_{12}\\
G_{23} & =\langle\vec b, \partial_2y \rangle = \alpha_1 G_{21} + \alpha_2G_{22}\\
\sqrt{\det G} & = \det Q = \alpha_3\sqrt{\det G_{2\times 2}},
\end{split}
\end{equation*}
which yields:
$$\left[\begin{array}{c}\alpha_1\\ \alpha_2\end{array}\right]
= (G_{2\times 2})^{-1} \left[\begin{array}{c}G_{13}\\
    G_{23}\end{array}\right], \qquad 
\alpha_3 = \frac{\sqrt{\det G}}{\sqrt{\det G_{2\times 2}}},$$
exactly as claimed in (\ref{b}).

\medskip

{\bf 4.} We now modify the sequence $\{Q^h\}$ to another sequence $\tilde
Q^h\in L^2(\Omega, \mathbb{R}^{3\times 3})$ so that:
$$R^h = \tilde Q^hA^{-1}\in SO(3) \qquad \forall a.e. ~ x'\in\Omega,$$
This is done by projecting $\mathbb{P}_{SO(3)}$ onto $SO(3)$ when
possible, and  setting:
$$\tilde Q^hA^{-1} = \left\{
\begin{array}{ll} \mathbb{P}_{SO(3)}(Q^hA^{-1}) & \mbox{ if }
  Q^hA^{-1}\in \mathcal{O}_\epsilon (SO(3))\\
\mbox{Id} & \mbox{ otherwise}
\end{array}\right. $$
with a small $\epsilon > 0$. Then, by (\ref{trea}):
\begin{equation}\label{seia}
\int_\Omega |\tilde Q^h - Q^h|^2 \leq C\int_\Omega | \tilde
Q^hA^{-1} - Q^hA^{-1}|^2 \leq C \int_\Omega 
\mbox{dist}^2(Q^hA^{-1}, SO(3)) \leq Ch^2.
\end{equation}
In particular, by (\ref{uno}):
\begin{equation}\label{sei}
\tilde Q^h\to Q \qquad \mbox{ strongly in } L^2(\Omega,
\mathbb{R}^{3\times 3}).
\end{equation}

Define the scaled strains $S^h\in L^2(\Omega^1, \mathbb{R}^{3\times 3})$ by:
$$S^h(x', x_3) = \frac{1}{h} \left( (R^h)^T\nabla u^h(x', hx_3)
  A^{-1} - \mbox{Id}\right).$$
We have, in view of Lemma \ref{approx} (i) and (\ref{seia}):
\begin{equation}\label{seib}
\begin{split}
\int_{\Omega^1} |S^h|^2  & \leq \frac{C}{h^2} \int_{\Omega^1} |\nabla
u^h(x', hx_3) - \tilde Q^h|^2~\mbox{d}x \\ & \leq
\frac{C}{h^3}\int_{\Omega^h} |\nabla u^h - Q^h|^2 +
\frac{C}{h^2}\int_\Omega |Q^h - \tilde Q^h|^2 \leq C,
\end{split}
\end{equation}
and hence a subsequence of $\{S^h\}$ converges:
\begin{equation}\label{sette}
S^h\rightharpoonup \bar S \qquad \mbox{ weakly in } L^2(\Omega^1,
\mathbb{R}^{3\times 3}).
\end{equation}

\medskip

{\bf 5.} We now derive the formula on the limiting strain $\bar S$. 
Consider the difference quotients:
$$f^{s,h}(x', x_3) = \frac{1}{h} \frac{1}{s} (y^h(x', x_3+s) - y^h(x',
x_3)) \in L^2(\Omega^1, \mathbb{R}^{3\times 3}).$$
By (\ref{due}), it follows that:
$$f^{s,h}(x', x_3) = \frac{1}{h}\fint_0^s\partial_3y^h(x',
x_3+t)~\mbox{d}t \rightarrow \vec b(x') \qquad \mbox{ in }
L^2(\Omega^1, \mathbb{R}^{3}).$$
Similarly:
$$\partial_3 f^{s,h}(x', x_3) = \frac{1}{s}
\left(h^{-1}\partial_3y^h(x', x_3+s) - h^{-1}\partial_3y^h(x', x_3)
\right) \rightarrow 0 \qquad \mbox{ strongly in }
L^2(\Omega^1, \mathbb{R}^{3}),$$
while for $i=1,2$, by (\ref{sei}) and (\ref{sette}):
\begin{equation*}
\begin{split}
\partial_i f^{s,h}(x', x_3) & = \frac{1}{h}\frac{1}{s} \left(\nabla
  u^h(x', h(x_3+s)) - \nabla u^h(x', hx_3)\right)e_i \\ & = \frac{1}{s}
R^h(x') \left(S^h(x', x_3+s) - S^h(x', x_3)\right) Ae_i \\ & \qquad 
\rightharpoonup \frac{1}{s} Q A^{-1} \left(\bar S(x', x_3+s) -
  \bar S(x', x_3)\right) Ae_i \qquad \mbox{ weakly in } L^2(\Omega^1,\mathbb{R}^3). 
\end{split}
\end{equation*}
Concluding:
$$f^{s,h}\rightharpoonup \vec b 
\qquad \mbox{ weakly in } W^{1,2}(\Omega^1,\mathbb{R}^3),$$
and hence:
$$\forall i=1,2 \qquad 
\partial_i\vec b (x') = \frac{1}{s} Q A^{-1} \left(\bar S(x', x_3+s) -
  \bar S(x', x_3)\right) Ae_i.$$
By (\ref{quattro}), $QA^{-1}$ may be replaced by $Q^{T,
  -1}A$, so that:
$$\forall i=1,2 \qquad \bar S(x', x_3+s) Ae_i  = \bar S(x', x_3) Ae_i +
sA^{-1} Q^T \partial_i\vec b,$$
and in view of (\ref{tre}) we obtain:
\begin{equation}\label{otto}
\left(A~\bar S(x', x_3+s)A\right)_{2\times 2} 
= \left(A\bar S(x', x_3)A\right)_{2\times 2} + s(\nabla
y)^T\nabla \vec b.
\end{equation}

\medskip

{\bf 6.} We now compute the lower bound on the rescaled energies. 
Define the 'good' sets:
$$\Omega^1_h=\{(x', x_3)\in\Omega^1; ~ |S^h(x', x_3)|^2\leq \frac{1}{h} \}.$$
In view of (\ref{seib}), it follows the convergence of characteristic functions:
\begin{equation*}
\chi_h = \chi_{\Omega^1_h} \rightarrow 1 \qquad \mbox{ strongly in } L^1(\Omega^1).
\end{equation*}
and therefore, by (\ref{sette}):
\begin{equation}\label{nove}
\chi_h S^h \rightharpoonup \bar S \qquad \mbox{ weakly  in } L^2(\Omega^1,
\mathbb{R}^{3\times 3}).
\end{equation}
For small $h$, we may Taylor expand $W$ on the 'good' sets, using the definition of
$S^h$:
\begin{equation*}
\begin{split}
\forall (x', x_3)\in \Omega^1_h \qquad \frac{1}{h^2} W\left(\nabla u^h(x',
  hx_3)A^{-1}\right) & = \frac{1}{h^2} W(\mbox{Id} + hS^h(x',
x_3)) \\ & = \frac{1}{2} \mathcal{Q}_3(S^h(x', x_3)) + o(|S^h|^2).
\end{split}
\end{equation*}
By (\ref{nove}), we now obtain:
\begin{equation}\label{dieci}
\begin{split}
\liminf_{h\to 0} \frac{1}{h^2}E^h(u^h)  & \geq \liminf_{h\to 0}
\frac{1}{h^2}\int_{\Omega_h^1} W\left(\nabla u^h(x',
  hx_3)A^{-1}\right)~\mbox{d}x \\ & = 
\liminf_{h\to 0} \frac{1}{2} \int_{\Omega^1}\mathcal{Q}_3\left(\chi_h S^h(x',
  x_3)\right) \geq \frac{1}{2} \int_{\Omega^1}\mathcal{Q}_3\left(\bar
  S\right). 
\end{split}
\end{equation}
Since the quadratic form $\mathcal{Q}_3$ is nonnegative definite, we obtain:
\begin{equation*}
\begin{split}
\frac{1}{2} \int_{\Omega^1}\mathcal{Q}_3\left(\bar
  S\right) & \geq 
\frac{1}{2} \int_{\Omega^1}\mathcal{Q}_2\left((A ~\bar S(x',
  x_3) A)_{2\times 2}\right) \\ &
= \frac{1}{2} \int_{\Omega}\mathcal{Q}_2\left((A ~\bar S(x',0)
  A)_{2\times 2}\right)~\mbox{d}x' + \frac{1}{2}
\left(\int_{-1/2}^{1/2} s^2~\mbox{d}s\right)
\int_{\Omega}\mathcal{Q}_2\left(\nabla y)^T \nabla \vec
  b\right)~\mbox{d}x' \\ & 
\geq \frac{1}{24} \int_{\Omega}\mathcal{Q}_2\left(\nabla y)^T \nabla \vec
  b\right)~\mbox{d}x' = \mathcal{I}_G(y),
\end{split}
\end{equation*}
where we used (\ref{otto}). In view of (\ref{dieci}), the proof is complete.
\endproof

\section{The bending energy: recovery sequence and the upper bound}

In this section we prove that the lower bound in Theorem \ref{thm2} is
optimal, in the following sense:

\begin{theorem}\label{thm3}
For every isometric immersion $y\in W^{2,2}(\Omega,\mathbb{R}^3)$  of
the metric $ G_{2\times 2}$ as in (\ref{cinque}),  there exists a
sequence of 'recovery deformations' $u^h\in
W^{1,2}(\Omega^h,\mathbb{R}^3)$, such that:
\begin{itemize}
\item[(i)]  The rescaled sequence $y^h(x', x_3) = u^h(x', hx_3)$ converges in
$W^{1,2}(\Omega^1,\mathbb{R}^3)$ to $y$.
\item[(ii)] One has: $$\lim_{h\to 0} \frac{1}{h^2}E^h(u^h) = \mathcal{I}_G(y),$$
where the Cosserat vector $\vec b$ in the definition (\ref{IG}) of the
functional $\mathcal{I}_G$ is derived by (\ref{b}).
\end{itemize}
\end{theorem}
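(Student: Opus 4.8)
The plan is to reduce, by a density argument, to the case of a smooth realization $y$, and then to exhibit an explicit Kirchhoff--Cosserat recovery sequence $u^h(x',x_3)=y(x')+x_3\vec b(x')+\tfrac{x_3^2}{2}\vec d(x')$ for which (i) and (ii) follow from a uniform second‑order Taylor expansion of $W$ around $SO(3)$. For the reduction, let $y\in W^{2,2}(\Omega,\mathbb{R}^3)$ realize $G_{2\times 2}$; by the approximation theorem for $W^{2,2}$ isometric immersions of a two–dimensional metric on a simply connected domain (this is precisely where simple connectedness of $\Omega$ enters, and it is available from \cite{lepa}) there are $y_\delta\in\mathcal{C}^\infty(\bar\Omega,\mathbb{R}^3)$ with $(\nabla y_\delta)^T\nabla y_\delta=G_{2\times 2}$ and $y_\delta\to y$ in $W^{2,2}$. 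By (\ref{crossp}) each $|\partial_1y_\delta\times\partial_2y_\delta|^2=\det G_{2\times 2}$ is bounded below uniformly, so the vectors $\vec N_\delta,\vec b_\delta$ defined through (\ref{b}) converge to $\vec N,\vec b$ in $W^{1,2}\cap L^\infty$, and since the $\mathcal{Q}_2(x',\cdot)$ are nonnegative quadratic forms with $x'$–uniformly bounded coefficients this gives $\mathcal{I}_G(y_\delta)\to\mathcal{I}_G(y)$. Hence, once (i)--(ii) are established for every smooth $y_\delta$, a standard diagonal argument (Attouch) yields a selection $\delta=\delta(h)\to0$ for which $u^h:=u^h_{\delta(h)}$ does the job for $y$.

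For smooth $y$ with $(\nabla y)^T\nabla y=G_{2\times2}$ the vectors $\vec N,\vec b$ from (\ref{b}) are smooth, and $Q:=[\,\partial_1y\mid\partial_2y\mid\vec b\,]$ satisfies $R:=QA^{-1}\in SO(3)$ by Step~3 of the proof of Theorem \ref{thm2}. Choose a smooth field $\vec d$ realizing, for each $x'$, the minimum in (\ref{Qform}) defining $\mathcal{Q}_2\bigl(x',(\nabla y)^T\nabla\vec b\bigr)$, in the sense that $\tilde F:=Q^T[\,\partial_1\vec b\mid\partial_2\vec b\mid\vec d\,]$ has $\tilde F_{2\times2}=(\nabla y)^T\nabla\vec b$ and $\mathcal{Q}_3(A^{-1}\tilde FA^{-1})=\mathcal{Q}_2\bigl(x',(\nabla y)^T\nabla\vec b\bigr)$. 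Such a $\vec d$ exists: the first two columns of $\tilde F$ are prescribed, while its third column is $Q^T\vec d$, which sweeps all of $\mathbb{R}^3$ as $\vec d$ does ($Q$ being invertible); since $\mathcal{Q}_3$ depends only on symmetric parts, $\mathrm{sym}\,\tilde F$ then ranges exactly over the symmetric matrices with $2\times 2$ block $\mathrm{sym}\bigl((\nabla y)^T\nabla\vec b\bigr)$, i.e.\ over the class minimized in (\ref{Qform}); that the minimizing selection may be taken smooth is a routine argument, as in \cite{FJMhier}. Put $u^h:=y+x_3\vec b+\tfrac{x_3^2}{2}\vec d\in\mathcal{C}^\infty(\Omega^h,\mathbb{R}^3)$.

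To verify, compute $\nabla u^h A^{-1}=R+x_3M+O(x_3^2)=R\bigl(\mathrm{Id}+x_3R^TM+O(x_3^2)\bigr)$, with $M:=[\,\partial_1\vec b\mid\partial_2\vec b\mid\vec d\,]A^{-1}$ and the $O(x_3^2)$–term having $\bar\Omega$–uniformly bounded coefficient. Since $|x_3|\le h/2\to0$, the argument lies for small $h$ in the neighbourhood of $SO(3)$ on which $W$ is $\mathcal{C}^2$; using $W(R)=0$, $DW(\mathrm{Id})=0$ (as $\mathrm{Id}$ minimizes $W$), frame indifference, the expansion $W(\mathrm{Id}+F)=\tfrac12\mathcal{Q}_3(F)+o(|F|^2)$ uniformly for $|F|$ small, and $\tfrac{1}{h^3}\int_{-h/2}^{h/2}x_3^2\,\mathrm{d}x_3=\tfrac{1}{12}$,
\[
\frac{1}{h^2}E^h(u^h)=\frac{1}{h^3}\int_{\Omega^h}W(\nabla u^hA^{-1})\,\mathrm{d}x\ \longrightarrow\ \frac{1}{24}\int_\Omega\mathcal{Q}_3\bigl(R^TM\bigr)\,\mathrm{d}x'.
\]
Because $A$ is symmetric and $Q=RA$, we have $R^T=A^{-1}Q^T$, hence $R^TM=A^{-1}\tilde FA^{-1}$, so by the choice of $\vec d$, $\mathcal{Q}_3(R^TM)=\mathcal{Q}_2\bigl(x',(\nabla y)^T\nabla\vec b\bigr)$ and the right–hand side equals $\mathcal{I}_G(y)$, which is (ii) (and is consistent with the lower bound of Theorem \ref{thm2}). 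Finally $y^h(x',x_3)=u^h(x',hx_3)=y(x')+hx_3\vec b+\tfrac{h^2x_3^2}{2}\vec d\to y$ in $W^{1,2}(\Omega^1,\mathbb{R}^3)$, with $\nabla_{x'}y^h\to\nabla y$ and $\partial_3y^h\to0$ in $L^2$; this is (i). The diagonal extraction described above then completes the general case.

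The only genuinely non‑elementary ingredient is the reduction step: density of smooth realizations of the curved metric $G_{2\times 2}$ among its $W^{2,2}$ realizations, with convergence of the bending energy, on the simply connected $\Omega$ — imported from \cite{lepa}; without it the pointwise Taylor expansion above is unavailable. Everything else is the direct non‑Euclidean transcription of the classical Kirchhoff recovery construction of \cite{FJMhier}, the factor $A^{-1}$ being absorbed both into the definition of the relaxed form $\mathcal{Q}_2$ and into the choice (\ref{b}) of the Cosserat vector $\vec b$ — the latter being exactly what forces the zeroth‑order term $\nabla u^h(x',0)A^{-1}=QA^{-1}$ to land in $SO(3)$. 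A secondary, purely technical point is the smooth (or merely measurable, with an appropriate mollification) selection of the corrector $\vec d$, handled as in \cite{FJMhier}.
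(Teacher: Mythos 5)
Your overall architecture — construct a Kirchhoff--Cosserat ansatz $u^h=y+x_3\vec b+\tfrac{x_3^2}{2}\vec d$, choose $\vec d$ so that the induced strain realizes the minimizer in the definition of $\mathcal{Q}_2$, and Taylor expand $W$ — is exactly the paper's; the difference, and the problem, lies entirely in how you handle the lack of smoothness of $y$.

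You reduce to the smooth case by asserting that smooth isometric immersions of $G_{2\times 2}$ are $W^{2,2}$-dense among $W^{2,2}$ isometric immersions on a simply connected $\Omega$, attributing this to \cite{lepa}. That result is not proved in \cite{lepa}, and it is not available in general. Density of smooth isometries among $W^{2,2}$ isometries is a deep theorem even in the flat case ($G_{2\times 2}=\mathrm{Id}_2$, Pakzad and Hornung for developable maps); for a general curved metric $G_{2\times 2}$ it is, to the best of current knowledge, open. Both \cite{lepa} and the present paper deliberately avoid invoking any such density: instead they use a Lusin-type truncation (\cite{liu}, and Proposition 2 of \cite{FJMhier}) to produce Lipschitz fields $y^h\in W^{2,\infty}$, $\vec b^h\in W^{1,\infty}$ that \emph{coincide} with $y,\vec b$ on a set $\Omega_h$ with $|\Omega\setminus\Omega_h|=o(h^2)$, while the Lipschitz norms blow up slower than $1/h$. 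The approximants $y^h$ are not isometric immersions; they merely agree with $y$ on $\Omega_h$, so that there $Q^h=Q$ and $Q^hA^{-1}\in SO(3)$, and off $\Omega_h$ the contribution is killed by $|\Omega\setminus\Omega_h|/h^2\to0$ together with the Lipschitz control. Your diagonal argument would go through if the density claim were true, but since it is not, the smooth-case Taylor expansion is the only part of your proof that stands; the reduction step needs to be replaced by the truncation argument.

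A secondary issue: you also need the warping field $\vec d$ to be only $L^2$ (it involves $\nabla|\vec b|^2\in L^1\cap L^2$ when $\vec b\in W^{1,2}\cap L^\infty$), and the paper's truncation $d^h$ with $d^h\to\vec d$ in $L^2$ and $h\|d^h\|_{W^{1,\infty}}\to0$ handles that; in your scheme you implicitly rely on $\vec d$ being smooth, which again only follows after the (unavailable) smooth reduction.
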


It immediately follows that:

\begin{corollary}\label{upper}
Existence of a $W^{2,2}$ regular isometric immersion of the Riemannian
metric $G_{2\times 2}$ on $\Omega$ in $\mathbb{R}^3$ is equivalent to the upper bound on the energy
scaling at minimizers:
$$\exists C>0 \qquad \inf_{u\in W^{1,2}(\Omega^h,\mathbb{R}^3)} E^h(u)
\leq Ch^2.$$
\end{corollary}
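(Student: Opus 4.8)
The plan is to deduce Corollary \ref{upper} directly from Theorems \ref{thm2} and \ref{thm3}, treating it as the combination of the two implications, with the nontrivial direction following from the recovery construction.

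For the forward direction, suppose there is a $W^{2,2}$ isometric immersion $y$ of $G_{2\times 2}$ on $\Omega$, i.e. $(\nabla y)^T\nabla y = G_{2\times 2}$. Then Theorem \ref{thm3} furnishes a recovery sequence $u^h\in W^{1,2}(\Omega^h,\mathbb{R}^3)$ with $\frac{1}{h^2}E^h(u^h)\to \mathcal{I}_G(y)$. In particular $\frac{1}{h^2}E^h(u^h)$ is bounded, so there exists $C>0$ with $E^h(u^h)\le Ch^2$ for all small $h$, and after adjusting $C$ to absorb finitely many large-$h$ terms we get $\inf_{u\in W^{1,2}(\Omega^h,\mathbb{R}^3)}E^h(u)\le E^h(u^h)\le Ch^2$ for all $h$, as claimed. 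I would note that the existence of at least one $W^{2,2}$ immersion of a smooth metric $G_{2\times 2}$ is not in question here; the statement is an equivalence conditioned on that existence, and this direction uses only $\inf E^h \le E^h(u^h)$.

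For the converse, assume the scaling bound $\inf_{u}E^h(u)\le Ch^2$. Pick, for each $h$, an almost-minimizer $u^h$ — e.g. any $u^h$ with $E^h(u^h)\le 2Ch^2$ — so that the hypothesis \eqref{en_bound} of Theorem \ref{thm2} is met (with constant $2C$). Theorem \ref{thm2}(i)--(ii) then gives, up to translations $c^h$ and a subsequence, a limit $y\in W^{2,2}(\Omega,\mathbb{R}^3)$ with $(\nabla y)^T\nabla y = G_{2\times 2}$; that is, $y$ is exactly a $W^{2,2}$ isometric immersion of $G_{2\times 2}$. Hence such an immersion exists, closing the equivalence.

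The only point requiring a word of care — and the closest thing to an obstacle — is the bookkeeping of constants and the passage from a convergent/bounded sequence to a uniform-in-$h$ bound valid for all $h>0$ (not merely small $h$): this is routine, since $\inf E^h$ is finite for every fixed $h$ (take any smooth $u$) and one simply enlarges $C$ to dominate the finitely many excluded values, or restricts attention to $h\in(0,h_0)$ as is standard in $\Gamma$-convergence statements. I would also remark that, combined with Theorem \ref{thm2}, this shows the scaling $E^h\sim h^2$ is indeed the highest nonzero order precisely on the class of metrics admitting a $W^{2,2}$ immersion of $G_{2\times 2}$, which is the statement advertised in the introduction for Corollary~3.2.
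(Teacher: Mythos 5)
Your proof is correct and matches the paper's intent: Corollary~\ref{upper} is stated as an immediate consequence of Theorems~\ref{thm2} and~\ref{thm3}, and you supply precisely the two-directional argument the authors have in mind (recovery sequence from Theorem~\ref{thm3} for the upper bound, compactness from Theorem~\ref{thm2} to extract the $W^{2,2}$ isometric immersion for the converse). The remarks on uniformity in $h$ are correct but, as you note, routine.
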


\begin{corollary}\label{infmin}
The limiting functional $\mathcal{I}_G$ attains its minimum.
\end{corollary}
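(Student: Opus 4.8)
The statement to prove is Corollary \ref{infmin}: the limiting functional $\mathcal{I}_G$ attains its minimum.

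The plan is to use the direct method of the calculus of variations on the admissible set, combined with the $\Gamma$-convergence machinery already established. First I would fix the admissible class $\mathcal{A} = \{y \in W^{2,2}(\Omega,\mathbb{R}^3) : (\nabla y)^T\nabla y = G_{2\times 2}\}$, which is nonempty by Corollary \ref{upper} together with the Nash-Kuiper / convex integration argument quoted in the introduction — actually more directly, $\mathcal{A}\neq\emptyset$ is exactly the content needed, and the infimum of $\mathcal{I}_G$ over $\mathcal{A}$ is finite (it is bounded below by $0$ since $\mathcal{Q}_2$ is nonnegative definite, as stated in part (iii) of Theorem \ref{thm2}). Take a minimizing sequence $y^k \in \mathcal{A}$ with $\mathcal{I}_G(y^k) \to m := \inf_{\mathcal{A}} \mathcal{I}_G$.

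Next I would extract compactness. After subtracting constants $c^k \in \mathbb{R}^3$ (the functional $\mathcal{I}_G$ depends only on $\nabla y$ and $\nabla\vec b$, hence is translation invariant, so this is harmless), the constraint $(\nabla y^k)^T\nabla y^k = G_{2\times 2}$ gives a uniform $L^\infty$, hence $L^2$, bound on $\nabla y^k$; combined with the Poincaré inequality this bounds $y^k$ in $W^{1,2}$. For the second derivatives, I would relate $\nabla^2 y^k$ to $(\nabla y^k)^T\nabla\vec b^k$: since $\{\partial_1 y^k, \partial_2 y^k, \vec b^k\}$ forms (via $Q^k = [\partial_1 y^k\,|\,\partial_2 y^k\,|\,\vec b^k]$ with $Q^k (A)^{-1}\in SO(3)$, cf. step 3 of the proof of Theorem \ref{thm2}) a frame whose Gram matrix is the fixed smooth field $G$, the entries of $\nabla Q^k$ are controlled by the Christoffel-type symbols of $G$ (smooth, bounded) plus the components of $(\nabla y^k)^T\nabla\vec b^k$. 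The coercivity of $\mathcal{Q}_2$ on symmetric matrices, modulo the fact that $\mathcal{Q}_2$ only sees symmetric parts, means I must also argue that the skew part of $(\nabla y^k)^T\nabla\vec b^k$ is likewise controlled — this follows because differentiating $\langle\partial_i y, \vec b\rangle = G_{i3}$ and $\langle \vec b,\vec b\rangle = G_{33}$ expresses the relevant components algebraically in terms of $\nabla y^k$, $\vec b^k$ and $\nabla G$. Hence $\mathcal{I}_G(y^k) \le C$ forces a uniform $W^{2,2}$ bound on $y^k$ (equivalently $W^{1,2}$ bounds on $Q^k$, exactly as in \eqref{uno}), and we may pass to a subsequence $y^k \rightharpoonup y$ weakly in $W^{2,2}$, hence strongly in $W^{1,2}$ and with $\nabla y^k \to \nabla y$ a.e.

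Then I would check that the limit $y$ is admissible and that $\mathcal{I}_G$ is weakly lower semicontinuous along this sequence. Admissibility: $(\nabla y)^T\nabla y = G_{2\times 2}$ passes to the limit from the a.e. (or strong $L^2$) convergence of $\nabla y^k$. The associated Cosserat vectors converge $\vec b^k \to \vec b$ (strongly in $L^2$, weakly in $W^{1,2}$), since $\vec b^k$ is given by the explicit algebraic formula \eqref{b} in terms of $\nabla y^k$, $\det G$, and $\vec N^k$, and $\vec N^k \to \vec N$ follows from $\nabla y^k \to \nabla y$ a.e. together with $|\partial_1 y^k\times\partial_2 y^k|^2 = \det G_{2\times 2}$ bounded away from zero (cf. \eqref{crossp}). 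Consequently $(\nabla y^k)^T\nabla\vec b^k \rightharpoonup (\nabla y)^T\nabla\vec b$ weakly in $L^1$ (product of a strongly $L^2$-convergent factor with a weakly $L^2$-convergent one). Since $x' \mapsto \mathcal{Q}_2(x',\cdot)$ is a continuous family of nonnegative quadratic — hence convex — forms with measurable coefficients, the functional $F \mapsto \int_\Omega \mathcal{Q}_2(x', F(x'))\,dx'$ is weakly lower semicontinuous on $L^2$ (Tonelli-Serrin), giving $\mathcal{I}_G(y) \le \liminf_k \mathcal{I}_G(y^k) = m$. Since $y \in \mathcal{A}$, we conclude $\mathcal{I}_G(y) = m$ and the minimum is attained.

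The main obstacle is the compactness step: one must be careful that the coercivity of $\mathcal{Q}_2$ only controls the symmetric part of $(\nabla y)^T\nabla\vec b$, so establishing a genuine $W^{2,2}$ bound on the minimizing sequence requires the observation that the remaining (skew) components of $\nabla Q^k$ are automatically bounded by the smooth geometry of $G$ — this is the analogue of the fact that, for surfaces, the full second derivative is recovered from the second fundamental form plus the (fixed) first fundamental form and its derivatives. Once that bookkeeping is done, weak lower semicontinuity and passage of the nonlinear constraint to the limit are routine.
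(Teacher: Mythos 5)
Your proposal is correct, but it takes a genuinely different route from the paper. The paper's proof is a short diagonal argument built entirely on the $\Gamma$-convergence machinery already in hand: given a minimizing sequence $\{y_n\}$ for $\mathcal{I}_G$, Theorem \ref{thm3} supplies recovery sequences $u_n^h$ with $\tfrac{1}{h^2}E^h(u_n^h)\to\mathcal{I}_G(y_n)$; choosing $h(n)\to 0$ fast enough produces a single sequence $u^h=u_n^{h(n)}$ with $E^h(u^h)\le Ch^2$, to which the compactness and liminf inequality of Theorem \ref{thm2} then apply, yielding a limit $y$ with $\mathcal{I}_G(y)\le\lim_n\mathcal{I}_G(y_n)=\inf\mathcal{I}_G$. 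Your proposal instead runs the direct method on the 2d functional itself, which requires you to redo the compactness and lower semicontinuity at the level of $\mathcal{I}_G$ rather than inherit them from the 3d theory. Your identification of the crux — that $\mathcal{Q}_2$ controls only $\mathrm{sym}\,((\nabla y)^T\nabla\vec b)$, so that the $W^{2,2}$ bound must come from the algebraic fact that $(\nabla y)^T\nabla\vec b = -\alpha_3\Pi + H(x')$ with $H$ a fixed smooth field (and in particular the skew part $\partial_2 G_{13}-\partial_1 G_{23}$ is determined by $G$ alone), combined with the Gauss formula $\partial_{ij}y=\sum_m\gamma^m_{ij}\partial_m y+\Pi_{ij}\vec N$ — is exactly right, and this is indeed where the work is. You also correctly need uniform positive-definiteness of $\mathcal{Q}_2(x',\cdot)$ on symmetric $2\times2$ matrices, which follows from the nondegeneracy in (\ref{cond}) and the uniform bounds on $A$ on $\bar\Omega$; the paper uses this same fact elsewhere (end of the proof of Theorem \ref{optimal}) but sidesteps it here by delegating coercivity to the FJM rigidity inside Theorem \ref{thm2}. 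The one inaccuracy in your write-up is the passing appeal to Nash--Kuiper/convex integration for $\mathcal{A}\neq\emptyset$: those constructions give $\mathcal{C}^{1,\alpha}$ (not $W^{2,2}$) immersions, so they do not show the admissible class is nonempty — but you retract this immediately, and the paper's proof tacitly makes the same nonemptiness assumption by starting from a minimizing sequence. Net comparison: the paper's argument is shorter and inherits all hard estimates from the 3d rigidity theorem, while yours is self-contained at the 2d level and makes explicit the geometric identity that bounds $\nabla^2 y$ by the second fundamental form; both are valid.
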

\begin{proof}
Let $\{y_n\}_{n=1}^\infty$ be a minimizing sequence of
$\mathcal{I}_G$. By Theorem \ref{thm3}, there exists sequences
$u_n^h\in W^{1,2}(\Omega^h,\mathbb{R}^3)$ such that:
$\lim_{h\to 0} u^h_n(x', hx_3) = y_n$ in
$W^{1,2}(\Omega^1,\mathbb{R}^3)$ and $\lim_{h\to 0} \frac{1}{h^2}
E^h(u_n^h)=\mathcal{I}_G(y_n)$, for every $n$. Taking $u^h=u_n^{h(n)}$
for a sequence $h(n)$ converging to $0$ as $n\to\infty$ sufficiently
fast, we obtain: $E^h(u^h)\leq Ch^2$. Therefore, by Theorem
\ref{thm2} there exists a limiting deformation $y\in
W^{2,2}(\Omega,\mathbb{R}^3)$ so that:
$$\mathcal{I}_G(y) \leq \liminf_{h\to 0} \frac{1}{h^2} E^h(u^h) = \lim_{n\to\infty}
\mathcal{I}_G(y_n) = \inf \mathcal{I}_G,$$
which achieves that $y$ is a minimizer of $\mathcal{I}_G$.
\end{proof}

\bigskip

Before proving Theorem \ref{thm3},  recall that:
\begin{equation}\label{cmin}
\begin{split}
\forall F_{2\times 2}\in\mathbb{R}^{2\times 2}_{sym}\qquad 
\mathcal{Q}_2(x', F_{2\times 2}) & = \min\left\{ \mathcal{Q}_3(A^{-1}\tilde
  F A^{-1}); ~ \tilde F\in\mathbb{R}^{3\times 3}, ~ \tilde F_{2\times 2} =
  F_{2\times 2}\right\}\\ 
& = \min\left\{ \mathcal{Q}_3\big(A^{-1}(F^*_{2\times 2} +
  \mbox{sym}(c\otimes e_3)) A^{-1}\big); ~ c\in\mathbb{R}^3\right\}.
\end{split}
\end{equation}
In what follows, by:
$$c(x', F_{2\times 2})$$
we will denote the unique minimizer of the problem in (\ref{cmin}).

\medskip

\noindent {\bf Proof of Theorem \ref{thm3}.}

{\bf 1.} Let $y\in W^{2,2}(\Omega, \mathbb{R}^3)$ satisfy
(\ref{cinque}). Define the Cosserat vector field $\vec b\in
W^{1,2}\cap L^\infty(\Omega, \mathbb{R}^3)$ according to (\ref{b}) and
let: 
$$Q= \left[\begin{array}{ccc} \partial_1y &\partial_2 y & \vec
    b\end{array}\right] \in W^{1,2}\cap L^\infty(\Omega, \mathbb{R}^{3\times 3}).$$
By Step 2 in the proof of Theorem \ref{thm2}, it follows that:
\begin{equation}\label{jeden}
QA^{-1}\in SO(3) \qquad \forall \mbox{a.e.}~ x'\in\Omega.
\end{equation}
Define the limiting warping field $\vec d\in L^2(\Omega, \mathbb{R}^3)$:
\begin{equation}\label{d_warp}
\vec d(x') = Q^{T, -1}\left( c\big(x', (\nabla y)^T\nabla \vec b\big) -
  \frac{1}{2}\nabla |\vec b|^2\right).
\end{equation}
Let $\{d^h\}$ be a approximating sequence in $W^{1,\infty}(\Omega, \mathbb{R}^3)$, satisfying:
\begin{equation}\label{dh}
d^h\to \vec d \quad \mbox{ strongly in } L^2(\Omega, \mathbb{R}^3), \quad \mbox{ and}\quad
h\|d^h\|_{W^{1, \infty}} \to 0.
\end{equation}
Note that such sequence can always be derived by reparametrizing
(slowing down) a sequence of smooth approximations of $\vec d$.
Similiarly, consider the approximations $y^h\in W^{2,\infty}(\Omega,
\mathbb{R}^3)$ and $\vec b^h\in W^{1,\infty}(\Omega,\mathbb{R}^3)$,
with the following properties:
\begin{equation}\label{dwa}
\begin{split}
& y^h\to y \quad \mbox{ strongly in } W^{2,2}(\Omega,\mathbb{R}^3),
\quad \mbox{and } ~~ \vec b^h\to \vec b \quad \mbox{ strongly in }
W^{1,2}(\Omega,\mathbb{R}^3)\\
& h\left( \|y^h\|_{W^{2,\infty}} + \|\vec
  b^h\|_{W^{1,\infty}}\right)\leq \epsilon\\
&\frac{1}{h^2}|\Omega\setminus \Omega_h| \to 0, \quad \mbox{where }~~
\Omega_h =\left\{x'\in\Omega; ~ y^h(x') =  y(x') \mbox{ and } \vec b^h(x') = \vec b(x')\right\} 
\end{split}
\end{equation}
for some small $\epsilon> 0$.
Existence of approximations with the claimed properties follows by
partition of unity and truncation arguments, as a special case of the
Lusin-type result for Sobolev functions in \cite{liu} (see also
{Proposition 2 in \cite{FJMhier})}.

We now define $u^h\in W^{1,\infty}(\Omega^h, \mathbb{R}^3)$ by:
\begin{equation*}
u^h(x', x_3) = y^h(x') + x_3 \vec b^h(x') + \frac{x_3^2}{2}d^h(x').
\end{equation*}
Consequently, the rescalings $y^h\in W^{1,\infty}(\Omega^1,
\mathbb{R}^3)$ are:
$$ y^h(x', x_3) = y^h(x') + hx_3 \vec b^h(x') + \frac{h^2}{2}x_3^2 d^h(x'),$$
and therefore in view of (\ref{dh}) and (\ref{dwa}), Theorem
\ref{thm3} (i) follows.:

\medskip

{\bf 2.} Define the matrix fields:
$$Q^h(x') = \left[\begin{array}{ccc}\partial_1 y^h & \partial_2 y^h&
    \vec b^h \end{array}\right], \quad 
B^h(x') = \left[\begin{array}{ccc}\partial_1 \vec b^h & \partial_2
    \vec b^h & d^h \end{array}\right], \quad
D^h(x') = \left[\begin{array}{ccc}\partial_1 d^h &\partial_2 d^h&
   0 \end{array}\right], $$
so that:
$$ \nabla u^h(x', x_3) = Q^h(x') + x_3 B^h(x') + \frac{x_3^2}{2}
D^h(x') \qquad \forall (x', x_3)\in \Omega^h. $$
Since $Q^h=Q$ in the set $\Omega_h$, then by (\ref{jeden}) and the bound on
the Lipschitz constants of $y^h$ and $\vec b^h$ in (\ref{dwa}), we
obtain:
\begin{equation}\label{trzy}
\mbox{dist}(Q^hA^{-1}, SO(3)) \leq \frac{C}{h}
\mbox{dist}(x', \Omega_h)\leq \frac{C}{h} |\Omega\setminus
\Omega_h|^{1/2}.
\end{equation}
The last bound above can be easily obtained by noting that if
$B_r(x')\subset \Omega\setminus \Omega_h$ then $\pi r^2\leq
|\Omega\setminus \Omega_h|$, which implies $r\leq C
|\Omega\setminus \Omega_h|^{1/2}$. For $x'$ close to the boundary of
$\Omega$ one needs to slightly refine the argument using smoothness of
$\partial\Omega$. 

Consequently, by (\ref{trzy}) and (\ref{dwa}), it follows that for all $h$ sufficiently small:
\begin{equation*}
\begin{split}
\mbox{dist} \Big(\nabla u^h(x', hx_3) &A^{-1}, SO(3)\Big) \leq 
\mbox{dist}(Q^hA^{-1}, SO(3)) + h\|B^h\|_{L^{\infty}} + h^2\|D^h\|_{L^\infty}\\
& \leq \frac{C}{h} |\Omega\setminus
\Omega_h|^{1/2} + Ch(\|\nabla\vec b^h\|_{L^\infty} +
\|d^h\|_{L^\infty}) + Ch^2 \|\nabla d^h\|_{L^\infty}\leq \epsilon_0,
\end{split}
\end{equation*}
where $\epsilon_0$ is such that the energy density $W$ is bounded and 
$\mathcal{C}^2$ regular in the neighbourhood $\mathcal{O}_{\epsilon_0}(SO(3))$.
Taylor expanding $W$ at the given rotation in (\ref{jeden}), we compute:
\begin{equation*}
\begin{split}
\frac{1}{h^2}&\int_{\Omega_h\times (-1/2, 1/2)}  W\Big(\nabla u^h(x', hx_3)A^{-1}\Big)\\
& = \frac{1}{h^2}\int_{\Omega_h\times (-1/2, 1/2)} W\left(( Q(x') + hx_3 B^h(x') +
  h^2\frac{x_3^2}{2} D^h(x'))A^{-1}\right)~\mbox{d}x\\
& = \frac{1}{2} \int_{\Omega_h\times (-1/2, 1/2)} D^2W(Q(x')A^{-1}) \left((x_3 B^h(x') +
  h\frac{x_3^2}{2} D^h(x'))A^{-1}\right)^{\otimes 2} +\mathcal{O}(h)~\mbox{d}x.
\end{split}
\end{equation*}
Also, by (\ref{dwa}):
\begin{equation*}
\frac{1}{h^2}\int_{(\Omega\setminus\Omega_h)\times (-1/2, 1/2)} W\Big(\nabla u^h(x', hx_3)A^{-1}\Big)
\leq \frac{C}{h^2}|\Omega\setminus \Omega_h| \quad \to 0.
\end{equation*}
Hence:
\begin{equation}\label{lim}
\begin{split}
\lim_{h\to 0}&\frac{1}{h^2}E^h(u^h) = 
\lim_{h\to 0}\frac{1}{h^2}\int_{\Omega_h\times (-1/2, 1/2)} W\Big(\nabla u^h(x',
hx_3)A^{-1}\Big) \\ & =  \lim_{h\to 0} \frac{1}{2}\int_{\Omega_h\times (-1/2, 1/2)} 
D^2W(QA^{-1}) \left((x_3 B^h(x') +  h\frac{x_3^2}{2} D^h(x'))A^{-1}\right)^{\otimes 2}
~\mbox{d}x \\ & = \lim_{h\to 0} \frac{1}{2}\int_{-1/2}^{1/2}\int_{\Omega_h} x_3^2
D^2W(QA^{-1}) \left(B^h(x') A^{-1}\right)^{\otimes 2}
~\mbox{d}x'~\mbox{d}x_3 \\ &  = \lim_{h\to 0}\frac{1}{24}\int_{\Omega_h}
\mathcal{Q}_3\left((QA^{-1})^T B^h(x') A^{-1}\right) \\
&  = \frac{1}{24}\int_{\Omega} \mathcal{Q}_3\left(A^{-1} Q^T B A^{-1}\right),
\end{split}
\end{equation}
where we have used the last convergence in (\ref{dwa}), the frame
invariance of the density function $W$ resulting in: $D^2W(R)(F,F) = D^2W(\mbox{Id})(R^TF,
R^TF) = \mathcal{Q}_3(R^TF)$ valid for all $R\in SO(3)$, and the
following convergence:
$$B^h\to B(x') = \left[\begin{array}{ccc}\partial_1 \vec b & \partial_2
    \vec b & \vec d \end{array}\right] \qquad \mbox{strongly in }
L^2(\Omega,\mathbb{R}^{3\times 3}).$$
Now, note that by(\ref{d_warp}):
\begin{equation*}
\begin{split}
\mbox{sym} (Q^TB(x')) & = \mbox{sym}((\nabla y)^T\nabla\vec b)
+\mbox{sym}(e_3\otimes \left[\begin{array}{c} (\nabla y)^T\vec d +
    \frac{1}{2}\nabla |\vec b|^2 \\ 
\langle\vec b, \vec d\rangle \end{array}\right])\\ &
= \mbox{sym}((\nabla y)^T\nabla\vec b)
+\mbox{sym}\Big(e_3\otimes c(x', (\nabla y)^T\nabla\vec b)\Big).
\end{split}
\end{equation*}
Therefore, (\ref{lim}) becomes:
$$\lim_{h\to 0}\frac{1}{h^2}E^h(u^h) = \frac{1}{24}\int_{\Omega}
\mathcal{Q}_2\left((\nabla y)^T\nabla \vec b\right)~\mbox{d}x',$$
achieving the proof of Theorem \ref{thm3}.
\endproof

\section{The effective density $\mathcal{Q}_2$ and the case of $W$ isotropic}

In this section, we further study the 2d functional (\ref{IG}) and
the inhomogeneous effective energy measure in (\ref{cmin}).
By $L_3:\mathbb{R}^{3\times 3}\to\mathbb{R}^{3\times 3}$ we denote the
linear map with the property that:
$$\mathcal{Q}_3(F) = \langle L_3(F) : F\rangle \quad \mbox{ and }
\quad \langle L_3(F) : \tilde F\rangle = \langle L_3(\tilde F) : F\rangle
\qquad \forall F, \tilde F\in \mathbb{R}^{3\times 3}.$$
Note that by frame invariace of $W$ in (\ref{cond}) one has: $L_3(F) =
L_3(\mbox{sym}F)$ and $\mbox{skew}(L_3(F)) = 0$.

\begin{lemma}
Define the matrix field $M_A:\Omega\to\mathbb{R}^{3\times 3}$ by:
$$ \forall i:1\ldots 3 \qquad M_Ae_i = L_3(e_i\otimes A^{-1}e_3) A^{-1}e_3.$$
Then the unique minimizer $c_0 = c(x', F_{2\times 2})$ in (\ref{cmin})
is given by:
\begin{equation}\label{c0}
A^{-1}c_0 = - M_A^{-1} L_3(A^{-1}F_{2\times 2}^* A^{-1}) A^{-1}e_3.
\end{equation}
Consequently:
\begin{equation}\label{Qform2}
\mathcal{Q}_2(F_{2\times 2}) = \mathcal{Q}_3(A^{-1}F_{2\times 2}^*
A^{-1}) - \Big\langle M_A^{-1}L_3(A^{-1}F_{2\times 2}^* A^{-1})
A^{-1}e_3, L_3(A^{-1}F_{2\times 2}^* A^{-1}) A^{-1}e_3\Big\rangle,
\end{equation}
\end{lemma}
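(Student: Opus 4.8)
The statement is an exercise in solving a finite-dimensional quadratic minimization problem, and the approach is entirely linear algebra: differentiate the objective, read off the Euler–Lagrange equation, invert, and substitute back. I will carry this out in the variable $c$, exploiting the formula for $\mathcal{Q}_2$ in the second line of (\ref{cmin}).

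\textbf{Step 1: set up the objective and differentiate.}
Fix $x'\in\Omega$ and write $A=\sqrt{G(x')}$. By (\ref{cmin}) I must minimize over $c\in\mathbb{R}^3$ the function
$$\phi(c) = \mathcal{Q}_3\big(A^{-1}(F_{2\times 2}^* + \mathrm{sym}(c\otimes e_3))A^{-1}\big) = \big\langle L_3\big(A^{-1}(F_{2\times 2}^* + \mathrm{sym}(c\otimes e_3))A^{-1}\big) : A^{-1}(F_{2\times 2}^* + \mathrm{sym}(c\otimes e_3))A^{-1}\big\rangle.$$
Since $L_3$ is self-adjoint and kills skew parts, $\phi$ is a (convex) quadratic polynomial in $c$. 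Expanding, the quadratic part is $\langle L_3(A^{-1}\,\mathrm{sym}(c\otimes e_3)\,A^{-1}) : A^{-1}\,\mathrm{sym}(c\otimes e_3)\,A^{-1}\rangle$ and the linear part is $2\langle L_3(A^{-1}F_{2\times 2}^* A^{-1}) : A^{-1}\,\mathrm{sym}(c\otimes e_3)\,A^{-1}\rangle$. Differentiating in the direction $\delta c$ and using $A^{-1}\,\mathrm{sym}(\delta c\otimes e_3)\,A^{-1} = \mathrm{sym}\big((A^{-1}\delta c)\otimes(A^{-1}e_3)\big)$ together with $\langle L_3(X):\mathrm{sym}(p\otimes q)\rangle = \langle L_3(X)q, p\rangle$ (valid because $L_3(X)$ is symmetric), the first-order condition $D\phi(c_0)[\delta c]=0$ for all $\delta c$ becomes, after cancelling the transpose-symmetrization and the factor $A^{-1}$ acting on $\delta c$,
$$L_3\big(A^{-1}\,\mathrm{sym}(c_0\otimes e_3)\,A^{-1}\big)A^{-1}e_3 \;=\; -\,L_3\big(A^{-1}F_{2\times 2}^* A^{-1}\big)A^{-1}e_3.$$

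\textbf{Step 2: identify the left-hand side with $M_A(A^{-1}c_0)$.}
Write $v = A^{-1}c_0$ and $w = A^{-1}e_3$. Then $A^{-1}\,\mathrm{sym}(c_0\otimes e_3)\,A^{-1} = \mathrm{sym}(v\otimes w)$, and since $L_3$ annihilates skew parts, $L_3(\mathrm{sym}(v\otimes w)) = L_3(v\otimes w)$. Decomposing $v\otimes w = \sum_i v_i\, (e_i\otimes w)$ and using linearity of $L_3$ gives $L_3(v\otimes w)\,w = \sum_i v_i\, L_3(e_i\otimes w)\,w = M_A v$, which is exactly the definition of $M_A$ in the lemma. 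Hence the Euler–Lagrange equation reads $M_A(A^{-1}c_0) = -L_3(A^{-1}F_{2\times 2}^* A^{-1})A^{-1}e_3$, which is (\ref{c0}) once one knows $M_A$ is invertible. Uniqueness of the minimizer is then immediate from strict convexity (equivalently, positive-definiteness of $M_A$ on its relevant subspace), which also furnishes invertibility: for $v\neq 0$, $\langle M_A v, v\rangle = \mathcal{Q}_3(\mathrm{sym}(v\otimes w)) > 0$ by the nondegeneracy in (\ref{cond}) applied on a neighborhood of $SO(3)$, since $\mathrm{sym}(v\otimes w)\neq 0$ as $w = A^{-1}e_3\neq 0$; this is the one point requiring a little care, and I would state it as a short remark. (Strictly, $\mathcal{Q}_3$ is only positive definite on symmetric matrices, but $\mathrm{sym}(v\otimes w)$ is symmetric and nonzero, so the argument goes through.)

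\textbf{Step 3: substitute back to get (\ref{Qform2}).}
With $c_0$ the minimizer, $\mathcal{Q}_2(F_{2\times 2}) = \phi(c_0)$. Because $\phi$ is quadratic and $c_0$ is its critical point, $\phi(c_0) = \phi(0) + \tfrac12 D\phi(0)[c_0]$; more concretely, writing $X_0 = A^{-1}F_{2\times 2}^* A^{-1}$ and $Y_0 = \mathrm{sym}(v\otimes w)$ with $v = A^{-1}c_0$, bilinearity gives $\phi(c_0) = \langle L_3(X_0):X_0\rangle + 2\langle L_3(X_0):Y_0\rangle + \langle L_3(Y_0):Y_0\rangle$, and the Euler–Lagrange relation $\langle L_3(Y_0):Y_0\rangle = -\langle L_3(X_0):Y_0\rangle$ collapses this to $\langle L_3(X_0):X_0\rangle + \langle L_3(X_0):Y_0\rangle = \mathcal{Q}_3(X_0) + \langle L_3(X_0)w, v\rangle$. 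Finally, substituting $v = -M_A^{-1}L_3(X_0)w$ from Step 2 turns $\langle L_3(X_0)w, v\rangle$ into $-\langle M_A^{-1}L_3(X_0)w, L_3(X_0)w\rangle$, which is precisely the correction term in (\ref{Qform2}). I expect no genuine obstacle here; the only thing to be careful about is the bookkeeping of symmetrizations and the transpose in $\langle L_3(X_0):\mathrm{sym}(p\otimes q)\rangle = \langle L_3(X_0)q,p\rangle$ versus $= \langle L_3(X_0)p,q\rangle$ (they agree since $L_3(X_0)$ is symmetric), and the invertibility point in Step 2.
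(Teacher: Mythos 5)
Your proof is correct and follows essentially the same route as the paper's: differentiate the quadratic in $c$, read off the Euler--Lagrange equation, recognize the left-hand side as $M_A(A^{-1}c_0)$, invert, and substitute back using the critical-point identity to simplify $\mathcal{Q}_3(X_0+Y_0)$ to $\mathcal{Q}_3(X_0)+\langle L_3(X_0):Y_0\rangle$. The only genuine addition you make is the short argument that $M_A$ is symmetric positive definite (via $\langle M_A v,v\rangle=\mathcal{Q}_3(\mathrm{sym}(v\otimes w))>0$, using $\mathrm{sym}(v\otimes w)\neq0$ for $v,w\neq0$ and the nondegeneracy from (\ref{cond})), a point the paper uses implicitly; this is a worthwhile clarification and would strengthen the exposition.
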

\begin{proof}
For $i:1..3$ we have:
\begin{equation*}
\begin{split}
\frac{\mbox{d}}{\mbox{d}c_i}\mathcal{Q}_3(A^{-1}(F^*_{2\times 2} +
c\otimes e_3) A^{-1}) & = 2\Big\langle L_3 (A^{-1}(F^*_{2\times 2} +
c\otimes e_3) A^{-1}) : A^{-1} e_i\otimes A^{-1}e_3\Big\rangle \\ & = 
2\Big\langle A^{-1}L_3 \left( A^{-1}(F^*_{2\times 2} +
c\otimes e_3) A^{-1}\right)A^{-1} : e_i\otimes e_3\Big\rangle.
\end{split}
\end{equation*}
Therefore, at the miminizer $c_0$ we have:
\begin{equation*}
\begin{split}
\nabla_c \mathcal{Q}_3 (A^{-1}(F^*_{2\times 2} + c_0\otimes e_3)
A^{-1}) & = 2A^{-1}L_3\left(A^{-1}(F^*_{2\times 2} +
c_0\otimes e_3) A^{-1}\right)A^{-1}e_3  \\ & = 2A^{-1}L_3( A^{-1}F^*_{2\times 2}A^{-1} + A^{-1}c_0 \otimes
A^{-1 }e_3)A^{-1}e_3 = 0,
\end{split}
\end{equation*}
which is equivalent to:
$$-L_3(A^{-1}F^*_{2\times 2} A^{-1}) A^{-1}e_3 = L_3(A^{-1}c_0 \otimes
A^{-1 }e_3) A^{-1}e_3 = M_A A^{-1}c_0,$$
and consequently to (\ref{c0}). Then:
\begin{equation}
\begin{split}
\mathcal{Q}_2&(F_{2\times 2})  = \mathcal{Q}_3(A^{-1}F_{2\times 2}^*A^{-1} +
A^{-1}c_0\otimes A^{-1}e_3) \\ & = 
\Big\langle L_3(A^{-1}F_{2\times 2}^*A^{-1}) + L_3(A^{-1}c_0 \otimes A^{-1
}e_3) : A^{-1}F_{2\times 2}^*A^{-1}\Big\rangle \\ & 
= \Big\langle L_3(A^{-1}F_{2\times 2}^*A^{-1}) : A^{-1}F_{2\times
  2}^*A^{-1}  + A^{-1}c_0 \otimes A^{-1}e_3 \Big\rangle \\ & 
= \mathcal{Q}_3(A^{-1}F_{2\times 2}^*
A^{-1}) - \Big\langle L_3(A^{-1}F_{2\times 2}^* A^{-1}) :
M_A^{-1}L_3(A^{-1}F_{2\times 2}^* A^{-1}) (A^{-1}e_3\otimes A^{-1}e_3)\Big\rangle,
\end{split}
\end{equation}
which proves (\ref{Qform2}).
\end{proof}

\medskip

We now assume that the energy density $W$ is isotropic, i.e.:
$$\forall F\in\mathbb{R}^{3\times 3}\quad\forall R\in SO(3)\qquad
W(RF) = W(F).$$
It is known \cite{fried_book} (see also \cite{FJMhier}  and 
Appendix A in \cite{BLZ}) that $\mathcal{Q}_3$ is then given in
terms of the Lam\'e coefficients $\lambda, \mu$:
\begin{equation}\label{Qiso}
\mathcal{Q}_3(F) = \mu |\mathrm{sym} F|^2 + \lambda |\mathrm{tr} F|^2,
\end{equation}
and so we also have:
\begin{equation}\label{zero}
L_3(F) = \mu~\mbox{sym} F + \lambda (\mbox{tr} F)~\mbox{Id}.
\end{equation}

\begin{lemma}\label{isotropic}
Assume that $W$ is isotropic, so that (\ref{Qiso}) holds. Then:
\begin{equation}\label{MA}
M_A = \frac{\mu}{2} |A^{-1}e_3|^2\mathrm{Id} + (\lambda +
\frac{\mu}{2}) (A^{-1}e_3 \otimes A^{-1}e_3)
\end{equation}
and, denoting $D= A^{-1}F_{2\times 2}^*A^{-1}$ and $d= A^{-1}e_3$, we have:
\begin{equation}\label{iso2}
\forall F_{2\times 2}\in\mathbb{R}^{2\times 2}_{sym} \qquad
\mathcal{Q}_2(x', F_{2\times 2}) 
 = \mu\left(|D|^2 - 2\frac{|Dd|^2}{|d|^2} 
+\frac{\langle Dd, d\rangle^2}{|d|^4}\right) + 
\frac{\lambda\mu}{\lambda+\mu}\left(\mathrm{tr} D  - \frac{\langle Dd,
    d\rangle}{|d|^2}\right)^2.  
\end{equation}
\end{lemma}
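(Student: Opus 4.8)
The plan is to specialize the two general formulas \eqref{MA} from \eqref{c0}'s setup and \eqref{iso2} from \eqref{Qform2}, using the isotropic expression \eqref{zero} for $L_3$. First I would compute $M_A$. By definition, $M_A e_i = L_3(e_i\otimes A^{-1}e_3)A^{-1}e_3$; writing $d=A^{-1}e_3$ and using \eqref{zero}, we have $L_3(e_i\otimes d) = \mu\,\mathrm{sym}(e_i\otimes d) + \lambda\langle e_i,d\rangle\mathrm{Id} = \frac{\mu}{2}(e_i\otimes d + d\otimes e_i) + \lambda\langle e_i,d\rangle\mathrm{Id}$. Applying this to $d$ gives $M_A e_i = \frac{\mu}{2}(\langle d,d\rangle e_i + \langle e_i,d\rangle d) + \lambda\langle e_i,d\rangle d = \frac{\mu}{2}|d|^2 e_i + (\lambda+\frac{\mu}{2})\langle e_i,d\rangle d$, which is exactly the claimed action of $\frac{\mu}{2}|A^{-1}e_3|^2\mathrm{Id} + (\lambda+\frac{\mu}{2})(A^{-1}e_3\otimes A^{-1}e_3)$ on $e_i$. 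Since this holds for $i=1,2,3$, \eqref{MA} follows.

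Next I would invert $M_A$. Since $M_A = a\,\mathrm{Id} + b\,(d\otimes d)$ with $a=\frac{\mu}{2}|d|^2$ and $b=\lambda+\frac{\mu}{2}$, the Sherman–Morrison formula gives $M_A^{-1} = \frac{1}{a}\mathrm{Id} - \frac{b}{a(a+b|d|^2)}(d\otimes d)$. It is convenient to record that $M_A^{-1}d = \frac{1}{a+b|d|^2}d = \frac{1}{(\lambda+\mu)|d|^2}d$, and more generally $M_A^{-1}v = \frac{2}{\mu|d|^2}v - \frac{2b}{\mu|d|^2(\lambda+\mu)|d|^2}\langle v,d\rangle d$ for any $v$. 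Then I substitute into \eqref{Qform2}: with $D=A^{-1}F_{2\times 2}^*A^{-1}$ (symmetric, so $L_3(D)=\mu D + \lambda(\mathrm{tr}\,D)\mathrm{Id}$), one has $L_3(D)d = \mu Dd + \lambda(\mathrm{tr}\,D)d =: v$, and \eqref{Qform2} reads $\mathcal{Q}_2(F_{2\times 2}) = \mathcal{Q}_3(D) - \langle M_A^{-1}v, v\rangle$ with $\mathcal{Q}_3(D) = \mu|D|^2 + \lambda(\mathrm{tr}\,D)^2$ by \eqref{Qiso}.

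The remaining work is purely algebraic: expand $\langle M_A^{-1}v,v\rangle$ using the formula for $M_A^{-1}$, collect the scalar invariants $|Dd|^2$, $\langle Dd,d\rangle$, $\mathrm{tr}\,D$, $|d|^2$ (note $D$ is symmetric so $\langle Dd,d\rangle$ is the only relevant bilinear scalar), and verify that the $\lambda$-dependent cross terms reorganize into the coefficient $\frac{\lambda\mu}{\lambda+\mu}$ exactly as in \eqref{iso2} — the same combination that appears in the classical plane-stress reduction \eqref{zero0}. I expect the main obstacle to be purely bookkeeping: keeping track of the several scalar contractions and confirming that the messy $\lambda$-terms telescope to $\frac{\lambda\mu}{\lambda+\mu}(\mathrm{tr}\,D - \langle Dd,d\rangle/|d|^2)^2$ rather than something more complicated; a useful sanity check is to set $d=e_3$ (i.e. $A^{-1}e_3=e_3$, the case $G=G_{2\times 2}^*+e_3\otimes e_3$) and confirm \eqref{iso2} collapses to \eqref{zero0}.
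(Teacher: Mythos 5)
Your proposal is correct and follows essentially the same route as the paper: compute $M_A$ from \eqref{zero}, invert via the rank-one perturbation (Sherman--Morrison) formula, and substitute into \eqref{Qform2} with $L_3(D)d=\mu Dd+\lambda(\mathrm{tr}\,D)d$. The only difference is that the paper writes out the final scalar expansion explicitly (obtaining the coefficient $\frac{(2\lambda+\mu)\mu}{\lambda+\mu}$ on $\langle Dd,d\rangle^2/|d|^4$ and simplifying $\lambda-\frac{\lambda^2}{\lambda+\mu}=\frac{\lambda\mu}{\lambda+\mu}$), whereas you leave this to bookkeeping; that algebra does close exactly as you anticipate, and your $d=e_3$ sanity check is indeed the degenerate case recorded in Remark~\ref{rem4.3}.
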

\begin{proof}
By (\ref{zero}), we obtain:
$$M_Ae_i = L_3(e_i\otimes d) d
= (\lambda + \frac{\mu}{2})\langle d, e_i\rangle d +
\frac{\mu}{2}|d|^2 e_i$$
which gives (\ref{MA}).
It is easy to check directly the following general formula:
$$(\alpha~\mbox{Id} + a\otimes b)^{-1} = \frac{1}{\alpha}\mbox{Id} -
\frac{1}{\alpha(\alpha+\langle a, b\rangle)} a\otimes b.$$
Applying it to $\alpha = \frac{\mu}{2}|d|^2$ and $a=(\lambda
+\frac{\mu}{2})d$ and $b=d$, we get:
$$ M_A^{-1}= \frac{2}{\mu} \frac{1}{|d|^2} \mbox{Id} - \frac{2\lambda
  + \mu}{\mu(\lambda +\mu)} \frac{1}{|d|^4} (d\otimes d).$$
Therefore:
\begin{equation*}
\langle M_A^{-1} L_3(D)d, L_3(D)d\rangle =
\frac{\lambda^2}{\lambda+\mu}(\mbox{tr} D)^2 +
2\frac{\lambda\mu}{\lambda + \mu}(\mbox{tr} D)\frac{\langle Dd,
  d\rangle}{|d|^2}  + 2\mu \frac{|Dd|^2}{|d|^2} - \frac{(2\lambda +
  \mu)\mu}{\lambda+\mu}\frac{\langle Dd, d\rangle^2}{|d|^4}.
\end{equation*}
Concluding:
\begin{equation*}
\begin{split}
\mathcal{Q}_2(x', F_{2\times 2}) & = \frac{\lambda\mu}{\lambda+\mu}(\mbox{tr} D)^2 +
\mu|D|^2 - 2\frac{\lambda\mu}{\lambda + \mu}(\mbox{tr} D)\frac{\langle Dd,
  d\rangle}{|d|^2}  - 2\mu \frac{|Dd|^2}{|d|^2} + \frac{(2\lambda +
  \mu)\mu}{\lambda+\mu}\frac{\langle Dd, d\rangle^2}{|d|^4}
\end{split}
\end{equation*}
which yields (\ref{iso2}).
\end{proof}

\begin{theorem}\label{isobest}
Assume that $W$ is isotropic, so that (\ref{Qiso}) holds. Then:
\begin{equation}\label{iso3}
\begin{split}
\forall F_{2\times 2}\in\mathbb{R}^{2\times 2}_{sym} \qquad
& \mathcal{Q}_2(x', F_{2\times 2})
= \mathcal{Q}_{2, iso}^0\left(\sqrt{G_{2\times 2}}^{-1}F_{2\times
    2}\sqrt{G_{2\times 2}}^{-1}\right) \\ 
& = \mu\left| \sqrt{G_{2\times 2}}^{-1}F_{2\times 2}\sqrt{G_{2\times 2}}^{-1}\right|^2 
+ \frac{\lambda\mu}{\lambda+\mu}
|\mathrm{tr}\left(\sqrt{G_{2\times 2}}^{-1}F_{2\times
    2}\sqrt{G_{2\times 2}}^{-1}\right)|^2 
\end{split}
\end{equation}
\end{theorem}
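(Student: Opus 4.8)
The plan is to start from the explicit formula (\ref{iso2}) of Lemma \ref{isotropic} and recognize its right-hand side as $\mathcal{Q}^0_{2,iso}$ evaluated on the symmetric matrix obtained by ``flattening'' $D = A^{-1}F_{2\times 2}^* A^{-1}$ onto the plane orthogonal to $d = A^{-1}e_3$. Precisely, let $P_d = \mathrm{Id} - |d|^{-2}\,d\otimes d$ be the orthogonal projection onto $d^\perp$. Since $D^T=D$, the matrix $P_dDP_d$ is symmetric, and using $P_d^2=P_d$, cyclicity of the trace and the elementary identity $(d\otimes d)\,D\,(d\otimes d)=\langle Dd,d\rangle\,d\otimes d$ one computes
$$\mathrm{tr}(P_dDP_d)=\mathrm{tr}\,D-\frac{\langle Dd,d\rangle}{|d|^2},\qquad |P_dDP_d|^2=|D|^2-2\frac{|Dd|^2}{|d|^2}+\frac{\langle Dd,d\rangle^2}{|d|^4}.$$
Comparing these with the two brackets in (\ref{iso2}) and recalling (\ref{zero0}), this already gives $\mathcal{Q}_2(x',F_{2\times 2})=\mathcal{Q}^0_{2,iso}(P_dDP_d)$. (One could also bypass (\ref{iso2}) and obtain the same identity directly from (\ref{cmin}): writing a competitor as $D+(A^{-1}c)\otimes d$, splitting $A^{-1}c$ into its component along $d$ and its component in $d^\perp$, and minimizing the resulting two one-parameter quadratic problems, one kills the off-plane coupling and is left with $\mu|P_dDP_d|^2+\frac{\lambda\mu}{\lambda+\mu}(\mathrm{tr}\,P_dDP_d)^2$.)

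Since $\mathcal{Q}^0_{2,iso}$ of a symmetric matrix depends only on its eigenvalues, $\mathcal{Q}^0_{2,iso}=\mu\sum_i\lambda_i^2+\frac{\lambda\mu}{\lambda+\mu}\big(\sum_i\lambda_i\big)^2$, it remains to show that $P_dDP_d$ and $N:=\sqrt{G_{2\times 2}}^{-1}F_{2\times 2}\sqrt{G_{2\times 2}}^{-1}$ have the same spectrum, apart from the extra zero eigenvalue of $P_dDP_d$ (with eigenvector $d$). First I would observe that, for $\xi\in\mathbb{R}^3$, $\langle A^{-1}e_3,A\xi\rangle=\langle e_3,\xi\rangle=\xi_3$, so $A\xi\in d^\perp$ iff $\xi_3=0$; hence $d^\perp=A\,(\mathbb{R}^2\times\{0\})$ and the linear isomorphism $\iota:\mathbb{R}^2\to d^\perp$, $\iota\xi=A\,\hat\xi$ with $\hat\xi=(\xi_1,\xi_2,0)^T$, is an isometry from $\big(\mathbb{R}^2,\langle\cdot\,,G_{2\times 2}\cdot\rangle\big)$ onto $\big(d^\perp,\langle\cdot\,,\cdot\rangle\big)$, because $\langle\iota\eta,\iota\xi\rangle=\langle\hat\eta,G\hat\xi\rangle=\langle\eta,G_{2\times 2}\xi\rangle$. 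The subspace $d^\perp$ is invariant under $P_dDP_d$, and for $v,w\in d^\perp$ one has $\langle w,P_dDP_d\,v\rangle=\langle w,Dv\rangle$; taking $v=\iota\xi$, $w=\iota\eta$ and using that $A^{-1}$ is symmetric and $F_{2\times 2}^*\hat\xi=\widehat{F_{2\times 2}\xi}$ gives $\langle\iota\eta,D\iota\xi\rangle=\langle\hat\eta,F_{2\times 2}^*\hat\xi\rangle=\langle\eta,F_{2\times 2}\xi\rangle$. Therefore the conjugated operator $M:=\iota^{-1}\circ(P_dDP_d)|_{d^\perp}\circ\iota$ on $\mathbb{R}^2$ satisfies $\langle\eta,G_{2\times 2}M\xi\rangle=\langle\eta,F_{2\times 2}\xi\rangle$ for all $\eta,\xi$, i.e. $M=G_{2\times 2}^{-1}F_{2\times 2}$.

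To finish: $P_dDP_d|_{d^\perp}$ is conjugate to $M$, while $N=\sqrt{G_{2\times 2}}\,M\,\sqrt{G_{2\times 2}}^{-1}$ is similar to $M$ as well, so $P_dDP_d|_{d^\perp}$, $M$ and the symmetric matrix $N$ all share the same pair of eigenvalues $\lambda_1,\lambda_2$. As $P_dDP_d$ is symmetric with eigenvalues $\lambda_1,\lambda_2,0$, we get $|P_dDP_d|^2=\lambda_1^2+\lambda_2^2=|N|^2$ and $\mathrm{tr}(P_dDP_d)=\lambda_1+\lambda_2=\mathrm{tr}\,N$, whence $\mathcal{Q}_2(x',F_{2\times 2})=\mu|N|^2+\frac{\lambda\mu}{\lambda+\mu}(\mathrm{tr}\,N)^2=\mathcal{Q}^0_{2,iso}(N)$, which is (\ref{iso3}). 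I do not expect a genuine obstacle: the only things needing care are bookkeeping — that $P_dDP_d$ preserves $d^\perp$ so the restriction makes sense, and that $D$, $P_d$, hence $P_dDP_d$ are symmetric, so nothing is lost to $\mathrm{sym}(\cdot)$ — plus the one conceptual point worth flagging, namely that the minimization collapses the dependence on the full $G$ to a dependence on $G_{2\times 2}$ alone, which is precisely what the identification with $M=G_{2\times 2}^{-1}F_{2\times 2}$ makes transparent.
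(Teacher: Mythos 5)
Your proof is correct, and it takes a genuinely different route from the paper's. The paper proceeds by brute-force term matching: it rewrites both the bracketed expressions in (\ref{iso2}) and the right-hand side of (\ref{iso3}) in terms of the blocks $P_{tan}$, $P_{cross}$, $P_{33}$ of $P=G^{-1}$, using the Schur-type identity $(G_{2\times 2})^{-1}=P_{tan}-\frac{1}{P_{33}}P_{cross}\otimes P_{cross}$, and verifies equality monomial by monomial. You instead observe that the two brackets in (\ref{iso2}) are exactly $|P_dDP_d|^2$ and $\mathrm{tr}(P_dDP_d)$ for the orthogonal projection $P_d$ onto $d^\perp$, and then diagonalize: since $\mathcal{Q}^0_{2,iso}$ is a symmetric function of the eigenvalues, it suffices to show that $P_dDP_d\big|_{d^\perp}$ and $N=\sqrt{G_{2\times 2}}^{-1}F_{2\times 2}\sqrt{G_{2\times 2}}^{-1}$ share the same spectrum, which you obtain by exhibiting the linear isometry $\iota\xi=A\hat\xi$ from $\big(\mathbb{R}^2,\langle\cdot,G_{2\times 2}\cdot\rangle\big)$ onto $\big(d^\perp,\langle\cdot,\cdot\rangle\big)$ and computing that $\iota$ conjugates $P_dDP_d\big|_{d^\perp}$ to $G_{2\times 2}^{-1}F_{2\times 2}$, which is similar to $N$. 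Your identities $\mathrm{tr}(P_dDP_d)=\mathrm{tr}\,D-\langle Dd,d\rangle/|d|^2$ and $|P_dDP_d|^2=|D|^2-2|Dd|^2/|d|^2+\langle Dd,d\rangle^2/|d|^4$ check out (via $\mathrm{tr}(P_dDP_d)=\mathrm{tr}(P_dD)$ and $\mathrm{tr}\big((P_dDP_d)^2\big)=\mathrm{tr}(P_dDP_dD)$), as does the key computation $\langle\iota\eta,D\,\iota\xi\rangle=\hat\eta^TA^TA^{-1}F^*_{2\times 2}\hat\xi=\langle\eta,F_{2\times 2}\xi\rangle$. The comparison is instructive: the paper's algebraic verification is shorter to write but opaque, whereas your argument explains structurally \emph{why} the answer depends on $G$ only through $G_{2\times 2}$ — the relaxation over the third column kills everything along $d=A^{-1}e_3$ and leaves an operator naturally identified, via $A$, with the intrinsic operator $G_{2\times 2}^{-1}F_{2\times 2}$ on the midplate. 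The one point to state explicitly if writing this up is that you are evaluating $\mathcal{Q}^0_{2,iso}$ (defined in (\ref{zero0}) on $2\times2$ matrices) on the $3\times 3$ symmetric matrix $P_dDP_d$ by the same formula; this is harmless precisely because the extra eigenvalue is $0$, which contributes nothing to $\sum\lambda_i^2$ or $\big(\sum\lambda_i\big)^2$.
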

\begin{proof}
Given $v\in\mathbb{R}^3$, we denote $v_{tan}=(v_1, v_2)^T\in\mathbb{R}^2$. 
As in the proof of Theorem \ref{th12}, given $F\in\mathbb{R}^{3\times 3}$, by
$F_{tan}\in\mathbb{R}^{2\times 2}$ we denote the principal $2\times 2$
minor of $F$, and we let $F_{cross}= (Fe_3)_{tan} = (F_{13},
F_{23})^T \in\mathbb{R}^2$. 
We now use the notation of Lemma \ref{isotropic} and identify the terms in
(\ref{iso2}). Call $P=G^{-1}$. Then:
\begin{equation*}
\begin{split}
|D|^2 & = \left\langle PF^*_{2\times 2}P : F^*_{2\times 2}\right\rangle
= \left\langle (PF^*_{2\times 2}P)_{tan}: F_{2\times 2}\right\rangle =
\left\langle P_{tan}F_{2\times 2}P_{tan} :F_{2\times 2}\right\rangle\\
|Dd|^2 & = \left\langle PF^*_{2\times 2}Pe_3, F^*_{2\times 2}Pe_3\right\rangle
= \left\langle (PF^*_{2\times 2}Pe_3)_{tan}, F_{2\times
    2}P_{cross}\right\rangle = \left\langle P_{tan}F_{2\times 2}P_{cross}, F_{2\times
    2}P_{cross}\right\rangle\\
\langle Dd, d\rangle & = \langle PF_{2\times 2}^* Pe_3, e_3\rangle =
\langle FP_{cross}, P_{cross}\rangle\\
|d|^2 & = \left\langle Pe_3, e_3\right\rangle = P_{33}\\
\mbox{tr} D & = \mbox{tr} (PF^*_{2\times 2}) = \mbox{tr} (P_{tan}F_{2\times 2}).
\end{split}
\end{equation*}
Hence, (\ref{iso2}) becomes:
\begin{equation}\label{3}
\begin{split}
\mathcal{Q}_2(x', F_{2\times 2}) 
= & \mu\left( \left\langle P_{tan}F_{2\times 2}P_{tan} :F_{2\times
      2}\right\rangle - 2\frac{\left\langle P_{tan}F_{2\times 2}P_{cross}, F_{2\times
    2}P_{cross}\right\rangle}{P_{33}} + \frac{\langle FP_{cross}, P_{cross}\rangle^2}{(P_{33})^2}
\right) \\ 
& + \frac{\lambda\mu}{\lambda+\mu}
\left(\mbox{tr}(P_{tan}F_{2\times 2}) - \frac{\langle FP_{cross}, P_{cross}\rangle}{P_{33}}\right)^2.
\end{split}
\end{equation}

We now identify the terms in the right hand side of (\ref{iso3}),
using the formulas (\ref{4}) and (\ref{44}):
\begin{equation*}
\begin{split}
|\sqrt{G_{tan}}^{-1} & F_{2\times 2} \sqrt{G_{tan}}^{-1}|^2 = \langle
(G_{tan})^{-1}F_{2\times 2} (G_{tan})^{-1} : F_{2\times 2} \rangle \\
& = \langle (P_{tan} - \frac{1}{P_{33}}P_{cross}\otimes P_{cross})
F_{2\times 2} (P_{tan} - \frac{1}{P_{33}}P_{cross}\otimes P_{cross}) : 
 F_{2\times 2} \rangle \\  & = \left\langle P_{tan}F_{2\times 2}P_{tan} :F_{2\times
      2}\right\rangle - \frac{2}{P_{33}} \langle (P_{cross}\otimes
  P_{cross}) F P_{tan} : F_{2\times 2}\rangle \\ & \qquad + \frac{1}{(P_{33})^2}
  \langle (P_{cross}\otimes P_{cross}) F (P_{cross}\otimes
  P_{cross}) : F\rangle \\ & = \left\langle P_{tan}F_{2\times 2}P_{tan} :F_{2\times
      2}\right\rangle - 2\frac{\left\langle P_{tan}F_{2\times 2}P_{cross}, F_{2\times
    2}P_{cross}\right\rangle}{P_{33}} + \frac{\langle FP_{cross},
P_{cross}\rangle^2}{(P_{33})^2},\\
\mbox{tr}(\sqrt{G_{tan}}&^{-1} F_{2\times 2}
  \sqrt{G_{tan}}^{-1} ) = \mbox{tr}\left((G_{tan})^{-1} F_{2\times 2}\right)
 = \mbox{tr}\left( (P_{tan} - \frac{1}{P_{33}}P_{cross}\otimes P_{cross})
F_{2\times 2}\right) \\ & 
= \mbox{tr}(P_{tan}F_{2\times 2}) - \frac{1}{P_{33}}\langle FP_{cross}, P_{cross}\rangle.
\end{split}
\end{equation*}
The equality in (\ref{iso3}) follows directly by (\ref{3}).
\end{proof}

\begin{remark}\label{rem4.3}
When $G=G_{2\times 2}^* + e_3\otimes e_3$ then $d=e_3$ and $Dd=De_3 =
0$, so (\ref{iso2}) directly becomes:
\begin{equation}\label{rem}
\mathcal{Q}_2(x', F_{2\times 2}) 
 = \mu |D|^2 + \frac{\lambda\mu}{\lambda+\mu}|\mathrm{tr} D|^2,  
\end{equation}
which is consistent with (\ref{zero0}).
\end{remark}

\begin{remark}
Call $C(x')=G^{-1}(x')F_{2\times 2}^*$ and note that:
\begin{equation*}
\begin{split}
& \mbox{tr}~ D = \mbox{tr} ~C, \quad |D|^2= \mbox{tr}\left(C^2\right)\\
& |Dd|^2 = \langle C^2 G^{-1}e_3, e_3\rangle, \quad |d|^2 = \langle
G^{-1}e_3, e_3\rangle, \quad \langle Dd, d\rangle = \langle C G^{-1}e_3, e_3\rangle
\end{split}
\end{equation*}
Consequently, (\ref{iso2}) can also be equivalently written as:
\begin{equation*}
\mathcal{Q}_2(x', F_{2\times 2}) 
 = \mu\left(\mbox{tr} \left(C^2\right) - 2\frac{\langle C^2 G^{-1}e_3, e_3\rangle}{\langle
G^{-1}e_3, e_3\rangle}  + \frac{\langle C G^{-1}e_3, e_3\rangle^2}{\langle
G^{-1}e_3, e_3\rangle^2}\right) + 
\frac{\lambda\mu}{\lambda+\mu}\left(\mathrm{tr}~ C  - \frac{\langle C G^{-1}e_3, e_3\rangle}{\langle
G^{-1}e_3, e_3\rangle}\right)^2.  
\end{equation*}
\end{remark}
\vspace{\baselineskip}

\begin{center}
{\bf Part B: Other limits}
\end{center}

\section{A characterization of bending: the 3d energy scaling at minimizers}\label{scale}

In this section we deduce the following property, complementary to Corollary \ref{upper}: 
\begin{theorem}\label{lower}
The non-vanishing of the three Riemann curvatures of the metric $G$:
\begin{equation}\label{curv}
\exists x\in\Omega \qquad \Big(|R^3_{112}| + |R^3_{221}| +
|R_{1212}|\Big) (x)\neq 0 
\end{equation}
is equivalent to the lower bound on the energy
scaling at minimizers:
\begin{equation}\label{h2}
\exists c>0 \qquad \inf_{u\in W^{1,2}(\Omega^h,\mathbb{R}^3)} E^h(u)
\geq ch^2.
\end{equation}
\end{theorem}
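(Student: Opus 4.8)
The plan is to establish the equivalence by proving the two implications separately, with the more substantial work being the contrapositive of ``(\ref{curv}) $\Rightarrow$ (\ref{h2})''. Assume first that (\ref{curv}) fails, i.e. $R^3_{112}\equiv R^3_{221}\equiv R_{1212}\equiv 0$ on $\Omega$. I would show this forces the existence of a smooth (or at least $W^{2,2}$) isometric immersion $y$ of $G_{2\times 2}$ together with a Cosserat field $\vec b$ as in (\ref{b}), which moreover realizes $\mathcal{I}_G(y)=0$; combined with the recovery sequence from Theorem \ref{thm3} this yields $\inf E^h = o(h^2)$, contradicting (\ref{h2}). The key computation is to recognize that the quantity $(\nabla y)^T\nabla\vec b$ — whose $\mathcal{Q}_2$-norm is the bending integrand — is, after the change of frame by $A^{-1}$, essentially the ``off-diagonal'' block of the connection/curvature data of $G$ viewed as a metric on the thin slab. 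More precisely, since $Q=[\partial_1 y,\partial_2 y,\vec b]$ satisfies $Q^TQ=G$ and $\det Q>0$, the matrix $Q$ plays the role of a moving frame (a ``square root'' realization of $G$), and the integrability/compatibility conditions for the existence of such a $Q$ with the additional property $(\nabla y)^T\nabla\vec b$ having vanishing symmetric part reduce exactly to the vanishing of the three curvature components $R^3_{112}, R^3_{221}, R_{1212}$ — these are precisely the components of the Riemann tensor of $G$ that involve the $x_3$-direction paired against the in-plane directions (the $R^3_{i j k}$ with one index equal to $3$) plus the Gauss curvature term $R_{1212}$ of $G_{2\times 2}$. I expect to invoke that the full Riemann tensor of a product-type metric $G(x')$ on $\Omega\times(-h/2,h/2)$ has these as its only independent components, so their vanishing is equivalent to $G$ being flat, hence immersible with a canonical choice of $\vec b$, along which the second-order strain vanishes.

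For the converse direction — (\ref{curv}) $\Rightarrow$ (\ref{h2}) — I would argue by contradiction: suppose $\inf E^h = o(h^2)$ along a sequence, so there exist $u^h$ with $E^h(u^h)\le \epsilon_h h^2$, $\epsilon_h\to 0$. Apply Theorem \ref{thm2}: after translation we get $y^h\to y$ in $W^{1,2}$, $y$ realizes $G_{2\times 2}$, and $\mathcal{I}_G(y)\le \liminf \tfrac{1}{h^2}E^h(u^h)=0$, so $\mathcal{Q}_2(x',(\nabla y)^T\nabla\vec b)=0$ a.e. By the minimization structure of $\mathcal{Q}_2$ in (\ref{cmin}) and nonnegativity of $\mathcal{Q}_3$, this means $A^{-1}\big((\nabla y)^T\nabla\vec b\big)^*_{2\times 2}A^{-1}$ equals a symmetrized rank-one $\mathrm{sym}(c\otimes e_3)$-type correction up to skew parts; equivalently $\mathrm{sym}\big((\nabla y)^T\nabla\vec b\big)=0$ after the appropriate reduction. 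Now $Q=[\nabla y,\vec b]$ is a $W^{1,2}$ field with $Q^TQ=G$, and the vanishing of $\mathrm{sym}(Q^T\partial_i\vec b)_{2\times 2}$ together with $Q^TQ=G$ forces enough compatibility on $\nabla Q$ that one can compute the distributional curvature of $G$ from $Q$ and show it vanishes — contradicting (\ref{curv}). This step is where I expect the main obstacle: one must make rigorous, for $Q\in W^{1,2}\cap L^\infty$ only, that $Q^TQ=G$ plus the partial strain-vanishing condition implies all of $R^3_{112}, R^3_{221}, R_{1212}$ vanish in the sense of distributions. The natural route is to differentiate the relations $\langle\partial_i y,\partial_j y\rangle=G_{ij}$, $\langle\partial_i y,\vec b\rangle=G_{i3}$, $\langle\vec b,\vec b\rangle=G_{33}$, which determine $\partial_j Q$ in terms of $Q$ and $\partial G$ modulo the ``missing'' component (the symmetric part of $(\nabla y)^T\nabla\vec b$, which is exactly what we're assuming vanishes), then impose the Schwarz symmetry $\partial_i\partial_j y=\partial_j\partial_i y$ (valid since $y\in W^{2,2}$) and $\partial_i\vec b$ consistency; the resulting integrability conditions are precisely the three curvature equations.

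An alternative, possibly cleaner, organization of the converse is to avoid $W^{2,2}$-regularity subtleties by working with the approximating frames $\tilde Q^h$ from Step 4 of the proof of Theorem \ref{thm2}: these satisfy $\tilde Q^h A^{-1}\in SO(3)$ a.e., converge to $Q$, and their scaled strains $S^h\rightharpoonup\bar S$ with the structure (\ref{otto}); the hypothesis $\mathcal{I}_G(y)=0$ forces $(\nabla y)^T\nabla\vec b=0$ as a symmetric matrix field (since $\mathcal{Q}_2$ is positive on symmetric $2\times2$ matrices once we quotient by the $c\otimes e_3$ directions — here I'd check that (\ref{iso2}), or more generally (\ref{Qform2}) with $M_A$ invertible, shows $\mathcal{Q}_2(x',F_{2\times 2})=0\Leftrightarrow$ the full $3\times 3$ minimizer reduces to something with vanishing $(2\times2)$ symmetric block, hence $F_{2\times 2}=0$ when $F_{2\times2}$ is already symmetric). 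Then $\vec b$ is a ``parallel'' extension and the pair $(y,\vec b)$ gives a genuine isometric immersion of the full metric $G$ on a slab, whose existence is obstructed exactly by the curvature condition (\ref{curv}) via the classical fundamental theorem of surface theory / Bonnet-type argument. I would then close the loop by noting Corollary \ref{upper} already gives the ``$\le Ch^2$'' side unconditionally, so (\ref{h2}) together with that corollary pins the scaling exactly at $h^2$, and the theorem's dichotomy is complete. The delicate point remains the low regularity of $(y,\vec b)$; I'd handle it either via the mollification/Friesecke--James--M\"uller truncation already used in Theorem \ref{thm3}, or by citing a distributional version of the Gauss--Codazzi compatibility as in \cite{lepa}.
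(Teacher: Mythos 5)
Your top-level strategy matches the paper's: Theorem~\ref{thm2} (the $\Gamma$-liminf) and Theorem~\ref{thm3} (the recovery sequence) reduce the energy-scaling statement (\ref{h2}) to the statement $\min\mathcal{I}_G > 0$, and then one must prove that $\min\mathcal{I}_G = 0$ is equivalent to the vanishing of $R^3_{112}$, $R^3_{221}$, $R_{1212}$. The paper does exactly this via Theorems~\ref{th12} and~\ref{th23}. You also correctly identify that $\mathcal{Q}_2(x',F_{2\times 2})=0$ for symmetric $F_{2\times 2}$ forces $F_{2\times 2}=0$, so the condition $\mathcal{I}_G(y)=0$ becomes $\mathrm{sym}\big((\nabla y)^T\nabla\vec b\big)=0$; this is Step~1 of the paper's proof of Theorem~\ref{th12}.

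However, there is a genuine and serious gap in your mechanism for relating $\mathrm{sym}\big((\nabla y)^T\nabla\vec b\big)=0$ to the three curvatures. You write that the Riemann tensor of an $x_3$-independent $G$ ``has these as its only independent components, so their vanishing is equivalent to $G$ being flat, hence immersible with a canonical choice of $\vec b$,'' and similarly in the converse direction you claim that $(y,\vec b)$ ``gives a genuine isometric immersion of the full metric $G$ on a slab.'' Both claims are false, and in fact their falsity is the central point of the paper. The tensor of a $3\times 3$ metric $G(x')$ has six independent components, of which $R^3_{112}$, $R^3_{221}$, $R_{1212}$ are only three; components such as $R_{1313}$, $R_{2323}$, $R_{1323}$ remain. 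Section~\ref{examples} gives explicit non-flat, non-immersible metrics with all three of $R^3_{112}$, $R^3_{221}$, $R_{1212}$ identically zero: e.g.\ Example~\ref{ex1} with $G=\mathrm{diag}(1,1,\lambda(x'))$ has nonvanishing Ricci curvature whenever $\nabla^2\lambda - \frac{1}{2\lambda}\nabla\lambda\otimes\nabla\lambda\not\equiv 0$, yet $\min\mathcal{I}_G=0$. This is precisely what makes Theorem~\ref{lower} nontrivial and leads to the F\"oppl--von K\'arm\'an-type $h^4$ scaling in Part~B. Consequently, $\mathrm{sym}\big((\nabla y)^T\nabla\vec b\big)=0$ does not make $u(x',x_3)=y(x')+x_3\vec b(x')$ an isometric immersion of the $3$d metric $G$ (that would require many more conditions on higher $x_3$-derivatives of $(\nabla u)^T\nabla u$). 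What the paper actually proves (Theorem~\ref{th12}, Step~3) is that $\mathrm{sym}\big((\nabla y)^T\nabla\vec b\big)=0$ pins down the second fundamental form $\Pi$ of the $2$d surface $y(\Omega)$ to the explicit Christoffel-symbol formula (\ref{zero6}), and then (Theorem~\ref{th23}) the Codazzi--Mainardi and Gauss compatibility conditions for that particular $(G_{2\times 2},\Pi)$ pair are, after a computation, exactly $R^3_{112}=R^3_{221}=R_{1212}=0$ --- a strictly weaker condition than flatness of $G$. To repair your argument you would need to replace the flatness claim with this Bonnet-theorem-for-a-prescribed-$\Pi$ computation, and carry out the algebra (as in the paper) relating the Christoffel symbols $\gamma^s_{kl}$ of $G_{2\times 2}$ to the $\Gamma^s_{kl}$ of $G$.
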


Recall that the Riemann curvature tensor $R^\cdot_{\cdot\cdot\cdot}$ and its covariant
version $R_{\cdot\cdot\cdot\cdot}$ are given by:
\begin{equation*}
\begin{split}
& R^s_{ijk} = \partial_j\Gamma^s_{ik} - \partial_k\Gamma^s_{ij}
+\sum_{m=1}^3\Gamma^s_{jm}\Gamma^m_{ik} -
\sum_{m=1}^3\Gamma^s_{km}\Gamma^m_{ij}\\
& R_{sijk} = \sum_{m=1}^3G_{sm}R^m_{ijk},
\end{split}
\end{equation*}
while the Christoffel symbols are:
$$\Gamma^i_{kl} = \frac{1}{2} \sum_{m=1}^3G^{im} (\partial_l G_{mk} + \partial_kG_{ml} - \partial_mG_{kl}).$$

\begin{remark} In \cite{lepa} we proved for the metric $G$ having a
2d structure $G=(G_{2\times 2})^*+e_3\otimes e_3$, that condition
(\ref{h2}) is equivalent to the nonimmersability of $G$, i.e. 
nonvanishing of its full Riemann curvature tensor $R$. The reason for
this seemingly more restricitve result is that for such $G$, its
flatness is equivalent to the vanishing of the Gaussian curvature of
$G_{2\times 2}$, i.e. the 2d flatness of the midplate metric
$G_{2\times 2}$. In fact, any isometric immersion of $G$ induces a
flat isometric immersion of $G_{2\times 2}$ whose second fundamental
form $\Pi=0$ trivially satisfies the condition (\ref{zero6}) below.
We see that in the general case the curvatures that
converge to the reduced 2d energy $\mathcal{I}_G$ 
at the scaling $h^2$ are those listed in (\ref{curv}),
rather than all the curvatures which naturally contribute towards
residual 3d energy $E^h$. 
\end{remark}

The proof of Theorem \ref{lower} will follow directly from the next
two theorems, which we present separately for their independent interest.

\begin{theorem}\label{th12}
The following conditions are equivalent:
\begin{itemize}
\item[(i)] The energy functional (\ref{IG}) satisfies:
\begin{equation}\label{zerozero}
\min \mathcal{I}_G=0,
\end{equation}
where the minimum is taken over isometric immersions of
$G_{2\times 2}$ in $\mathbb{R}^3$  of regularity $W^{2,2}$.
\item[(ii)] There exists a $W^{2,2}$ isometric immersion
  $y:\Omega\to\mathbb{R}^3$
of $G_{2\times 2}$ such that:
\begin{equation}\label{zero3}
\mathrm{sym} \Big((\nabla y)^T\nabla \vec b \Big)=0
\qquad \mbox{ a.e. in } \Omega.
\end{equation}
where $\vec b:\Omega\to\mathbb{R}^3$ is uniquely defined by:
$$\det Q>0 \quad \mbox{and} \quad Q^TQ=G, \quad \mbox {where: } ~~Qe_1 = \partial_1y,
~Qe_2 = \partial_2 y, ~ Qe_3 = \vec b.$$
\item[(iii)]  There exists a $W^{2,2}$ isometric immersion of 
$G_{2\times 2}$ in $\mathbb{R}^3$,  whose second fundamental form
$\Pi$ is given by the Christoffel symbols of $G$:
\begin{equation}\label{zero6}
\begin{split}
\Pi_{11} = -\frac{1}{\sqrt{G^{33}}}\Gamma_{11}^3, \qquad
\Pi_{22} = -\frac{1}{\sqrt{G^{33}}}\Gamma_{22}^3, \qquad
\Pi_{12} = -\frac{1}{\sqrt{G^{33}}}\Gamma_{12}^3.
\end{split}
\end{equation}
\end{itemize}
\end{theorem}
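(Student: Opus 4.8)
The plan is to prove Theorem \ref{th12} by showing the cycle of implications $(i)\Leftrightarrow(ii)$ (this is essentially immediate) and $(ii)\Leftrightarrow(iii)$ (the substantive part). For $(i)\Leftrightarrow(ii)$, recall from Corollary \ref{infmin} that $\mathcal{I}_G$ attains its minimum, and that by Theorem \ref{thm2}(iii) the integrand of $\mathcal{I}_G(y)$ is $\frac{1}{24}\mathcal{Q}_2(x',(\nabla y)^T\nabla\vec b)$ with $\mathcal{Q}_2(x',\cdot)$ nonnegative definite and depending only on the symmetric part of its argument; moreover $\mathcal{Q}_2(x', F_{2\times 2}) = 0$ forces $\mathrm{sym}\,F_{2\times 2} = 0$, since by \eqref{cmin} vanishing of $\mathcal{Q}_2$ means $\mathcal{Q}_3(A^{-1}(F^*_{2\times 2}+\mathrm{sym}(c\otimes e_3))A^{-1})=0$ for the minimizing $c$, and nondegeneracy \eqref{cond} (equivalently positive-definiteness of $\mathcal{Q}_3$ on symmetric matrices) then forces the symmetric part of $A^{-1}(F^*_{2\times 2}+\mathrm{sym}(c\otimes e_3))A^{-1}$, hence of $F^*_{2\times 2}+\mathrm{sym}(c\otimes e_3)$, to vanish, and reading off the $2\times 2$ block gives $\mathrm{sym}\,F_{2\times 2}=0$. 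Thus $\min\mathcal{I}_G=0$ iff there is a $W^{2,2}$ immersion $y$ with $\mathrm{sym}((\nabla y)^T\nabla\vec b)=0$, which is \eqref{zero3}.

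The heart of the matter is $(ii)\Leftrightarrow(iii)$: identifying the condition $\mathrm{sym}((\nabla y)^T\nabla\vec b)=0$ with the prescription \eqref{zero6} of the second fundamental form by Christoffel symbols of the full metric $G$. I would first unpack $\vec b$ via \eqref{b}: write $\vec b = \partial_1 y\,\alpha_1 + \partial_2 y\,\alpha_2 + \alpha_3\vec N$ with $(\alpha_1,\alpha_2)^T = (G_{2\times 2})^{-1}(G_{13},G_{23})^T$ and $\alpha_3 = \sqrt{\det G}/\sqrt{\det G_{2\times 2}}$ — note $\alpha_i$ are intrinsic functions of $G$ alone, not of $y$. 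Then compute $\langle\partial_i y, \partial_j\vec b\rangle$ by differentiating: the terms involving $\partial_j\alpha_k$ and $\langle\partial_i y,\partial_j\partial_k y\rangle$ combine, using the Gauss formula $\langle\partial_i y,\partial_j\partial_k y\rangle = \Gamma^{2d}_{jk,i}$ (Christoffel symbols of $G_{2\times 2}$), while $\langle\partial_i y, \partial_j\vec N\rangle = -\Pi_{ij}$ is (minus) the second fundamental form. So $\mathrm{sym}((\nabla y)^T\nabla\vec b)_{ij} = -\alpha_3\Pi_{ij} + (\text{intrinsic quantities of }G)$. Setting this to zero and matching against the right-hand side of \eqref{zero6} reduces to the purely algebraic/differential-geometric identity that the ``intrinsic part'' equals $\alpha_3\cdot\frac{1}{\sqrt{G^{33}}}\Gamma^3_{ij}$, which in turn unwinds to the relation between the 2d Christoffel symbols $\Gamma^{2d}$, the 3d Christoffel symbols $\Gamma$, the entries $G_{i3}$, and the cofactor identity $\alpha_3 = 1/\sqrt{G^{33}\det G_{2\times 2}}\cdot\det G_{2\times 2} = \dots$; concretely one uses $G^{33} = \det G_{2\times 2}/\det G$ so $\alpha_3 = 1/\sqrt{G^{33}}$ in the normalization where... — I would verify $\alpha_3\sqrt{G^{33}}=1$ directly from the cofactor formula for the inverse metric.

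The main obstacle is the last step: showing that the ``intrinsic leftover'' in $\langle\partial_i y,\partial_j\vec b\rangle$, after accounting for the $-\alpha_3\Pi_{ij}$ term, is exactly $-\frac{\alpha_3}{\sqrt{G^{33}}}\Gamma^3_{ij} \cdot(\text{correction})$, i.e. that the reduced condition $\mathrm{sym}((\nabla y)^T\nabla\vec b)=0$ is equivalent to $\Pi_{ij} = -\frac{1}{\sqrt{G^{33}}}\Gamma^3_{ij}$ and not to some other prescription. This is a bookkeeping-heavy computation with the Christoffel symbols of the $3\times 3$ metric $G$, and the key algebraic input is expressing $\Gamma^3_{ij}$ ($i,j\in\{1,2\}$) and the $2$d Christoffel symbols of $G_{2\times 2}$ in terms of derivatives of $G_{\alpha\beta}$, $G_{\alpha 3}$, and the inverse-metric entries $G^{3m}$; I expect to need $G^{3\alpha} = -G^{33}\sum_\beta (G_{2\times 2})^{-1}_{\alpha\beta}G_{\beta 3}$, which is precisely the cofactor relation that makes $(\alpha_1,\alpha_2)$ appear. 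Once the identity is in place both directions of $(ii)\Leftrightarrow(iii)$ follow, since $\vec b$, hence $\Pi$, is determined by $y$ and $G$, and conversely an immersion of $G_{2\times 2}$ with the prescribed $\Pi$ automatically has the right $\vec b$. I would also remark that existence of \emph{some} $W^{2,2}$ immersion of $G_{2\times 2}$ is guaranteed by Nash--Kuiper (as noted in the introduction), so condition (iii) is a genuine constraint on $\Pi$ rather than on existence of immersions.
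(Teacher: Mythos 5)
Your plan follows the paper's route exactly: establish $(i)\Leftrightarrow(ii)$ from nondegeneracy of $\mathcal{Q}_2$, then reduce $(ii)\Leftrightarrow(iii)$ by expanding $\mathrm{sym}\big((\nabla y)^T\nabla\vec b\big)$, separating the $\Pi_{ij}$-contribution (carrying the factor $\alpha_3$) from terms intrinsic in $G$. The preliminary reductions you carry out are correct: the argument that $\mathcal{Q}_2(x',F_{2\times 2})=0$ forces $\mathrm{sym}\,F_{2\times 2}=0$ is sound, and $\alpha_3=\sqrt{\det G/\det G_{2\times 2}}=1/\sqrt{G^{33}}$ does follow from the cofactor identity as you say.

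The gap is precisely the step you yourself flag as ``the main obstacle'': you assert, but never verify, that the intrinsic leftover collapses to exactly the right multiple of $\Gamma^3_{ij}$ so that $(ii)$ becomes $\Pi_{ij}=-\Gamma^3_{ij}/\sqrt{G^{33}}$. That identity \emph{is} the theorem; deferring it leaves a plan, not a proof, and the computation is not a trivial bookkeeping exercise. The paper's derivation (Steps 2--3 of the proof of Theorem \ref{th12}) goes from (\ref{zero1}) to (\ref{zero4}) to (\ref{zero5}), and the decisive input in the final step is the metric-compatibility relation (\ref{chris}) \emph{combined with the structural hypothesis $\partial_3 G\equiv 0$}, which gives $\sum_m G_{m1}\Gamma^m_{13}=\sum_m G_{m2}\Gamma^m_{23}=\sum_m\big(G_{m2}\Gamma^m_{13}+G_{m1}\Gamma^m_{23}\big)=0$ and thereby makes the $3$d Christoffel symbols $\Gamma^m_{ij}$ emerge cleanly from the tangential derivatives of $G$. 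Your sketch never invokes $\partial_3 G=0$, yet without it there is no reason for the derivatives of $G_{2\times 2}$, the $\alpha_k$, and their gradients to recombine into $\Gamma^3_{ij}$. Moreover, if you route through the $2$d Christoffel symbols $\gamma^m_{ij}$ of $G_{2\times 2}$ as your Gauss-formula approach suggests, you will also need the conversion $\gamma^s_{kl}=\Gamma^s_{kl}-\frac{G^{3s}}{G^{33}}\Gamma^3_{kl}$ (which the paper only records later, as (\ref{cs}) in the proof of Theorem \ref{th23}); the paper's proof of Theorem \ref{th12} actually avoids the $2$d symbols altogether, passing from (\ref{zero4}) directly to the $3$d $\Gamma$'s. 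Carry out this computation — and record where $\partial_3 G=0$ enters — and the argument is complete.
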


\medskip

\begin{corollary}
The unique (up to rigid motions) minimizing immersion in
(\ref{zerozero}) is the immersion $y$ satisfying (\ref{zero3}) and
(\ref{zero6}). This immersion is automatically smooth (up to the boundary of $\Omega$).
\end{corollary}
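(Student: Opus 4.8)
The plan is to identify the minimizing immersion abstractly from Theorem~\ref{th12}, upgrade its regularity by bootstrapping the Gauss--Weingarten system, and derive uniqueness from the classical rigidity of surfaces. First I would take any $y\in W^{2,2}(\Omega,\mathbb{R}^3)$ realizing $G_{2\times 2}$ as in (\ref{cinque}) with $\mathcal{I}_G(y)=0$; such immersions exist, by Theorem~\ref{th12}, as soon as $\min\mathcal{I}_G=0$. The nondegeneracy in (\ref{cond}) makes each form $\mathcal{Q}_2(x',\cdot)$ positive definite on symmetric $2\times 2$ matrices, and this coercivity is precisely what is used in the proof of Theorem~\ref{th12}; hence $\mathcal{I}_G(y)=0$ forces $\mathrm{sym}\big((\nabla y)^T\nabla\vec b\big)=0$ a.e., which, by the computation behind the equivalence (ii)$\Leftrightarrow$(iii) of Theorem~\ref{th12}, is in turn equivalent to the second fundamental form $\Pi_y=(\nabla y)^T\nabla\vec N$ of $y(\Omega)$ being the field prescribed in (\ref{zero6}). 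In particular $\Pi_y$ is a fixed field, and it is \emph{smooth}, because $G\in\mathcal{C}^\infty(\bar\Omega)$ renders the Christoffel symbols $\Gamma^3_{ij}$ and $G^{33}=\det G_{2\times 2}/\det G>0$ smooth.

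For the regularity I would work with the moving frame $F=[\,\partial_1 y\mid\partial_2 y\mid\vec N\,]$, which belongs to $W^{1,2}(\Omega,\mathbb{R}^{3\times 3})$ by Theorem~\ref{thm2}(ii) and has $\det F=\sqrt{\det G_{2\times 2}}$, bounded away from zero. The Gauss and Weingarten identities express $\partial_i F$ as $FA_i$ ($i=1,2$), where the matrices $A_i$ are assembled algebraically from $(G_{2\times 2})^{-1}$, $\nabla G_{2\times 2}$ and $\Pi_y$; by the previous paragraph they are smooth on $\bar\Omega$. Since multiplication by a fixed smooth function maps $W^{k,2}(\Omega)$ into itself for every $k\ge 1$, the relations $\partial_i F=FA_i$ and induction give $F\in W^{k,2}(\Omega)$ for all $k$, hence $F\in\mathcal{C}^\infty(\bar\Omega)$; no boundary conditions enter. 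Therefore $\nabla y\in\mathcal{C}^\infty(\bar\Omega)$, so $y\in\mathcal{C}^\infty(\bar\Omega)$, and the Cosserat vector $\vec b$ from (\ref{b}) is then smooth as well.

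For uniqueness, let $y_1,y_2$ be two such minimizers with frames $F_1,F_2$; by the first paragraph both satisfy $\partial_i F_j=F_jA_i$ with the \emph{same} smooth coefficients $A_i$. Fixing $x_0\in\Omega$, one has $F_j(x_0)^TF_j(x_0)=G_{2\times 2}^*(x_0)+e_3\otimes e_3$ for $j=1,2$, with $\det F_j(x_0)>0$, so $R=F_1(x_0)F_2(x_0)^{-1}\in SO(3)$; replacing $y_2$ by $Ry_2+c$ (which replaces $F_2$ by $RF_2$) with $c$ chosen so that $y_1(x_0)=y_2(x_0)$, I may assume $F_1(x_0)=F_2(x_0)$. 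Then $\phi=F_2F_1^{-1}$ satisfies $\partial_i\phi=F_2A_iF_1^{-1}-F_2A_iF_1^{-1}=0$ on the connected set $\Omega$ (using $\partial_i(F_1^{-1})=-A_iF_1^{-1}$), hence $\phi\equiv\mathrm{Id}$, i.e. $F_1\equiv F_2$; integrating and using $y_1(x_0)=y_2(x_0)$ gives $y_1\equiv y_2$. Undoing the rigid motion shows the original $y_1,y_2$ agree up to a rigid motion of $\mathbb{R}^3$, and since their common second fundamental form is (\ref{zero6}), this immersion coincides with the one of Theorem~\ref{th12}(iii).

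I do not expect a deep obstacle: the statement is essentially a packaging of Theorem~\ref{th12} with textbook surface theory. The one point requiring genuine care is the transition in the first paragraph — that the a priori merely nonnegative form $\mathcal{Q}_2$ is in fact coercive on symmetric matrices, which is what upgrades the integral identity $\mathcal{I}_G(y)=0$ to the pointwise relation $\mathrm{sym}\big((\nabla y)^T\nabla\vec b\big)=0$ and thereby rigidifies $\Pi_y$. Once $\Pi_y$ is pinned to the smooth field (\ref{zero6}), the Gauss--Weingarten system for $y$ is linear with smooth coefficients, and both smoothness up to $\partial\Omega$ and rigidity up to rigid motions follow by the standard bootstrap and Bonnet-type arguments.
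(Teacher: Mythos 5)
Your argument is correct and follows the same route as the paper: identify the pointwise constraint $\mathrm{sym}\big((\nabla y)^T\nabla\vec b\big)=0$ via Theorem~\ref{th12}, read off that $\Pi_y$ is the smooth field (\ref{zero6}), bootstrap the Gauss--Weingarten system to get smoothness up to $\bar\Omega$, and invoke Bonnet-type rigidity for uniqueness up to rigid motions. You simply write out the moving-frame ODE $\partial_i F=FA_i$ and its consequences in full where the paper appeals to the standard bootstrap and uniqueness theorems.
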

\begin{proof}
Let $y\in W^{2,2}(\Omega,\mathbb{R}^3)$ satisfy (\ref{cinque}) and
$\mathcal{I}_G(y)=0$.
Denoting by $\vec N\in W^{1,2}(\Omega,\mathbb{R}^3)$ the unit normal to the surface
$y(\Omega)$, we have \cite{HH}:
$$\partial_{ij}y = \sum_{m=1}^2\gamma_{ij}^m \partial_my - \Pi_{ij}\vec N.$$
Since $G_{2\times 2}$ is smooth, then its Christoffel symbols $\gamma_{ij}^m$ are smooth, and
also the coefficients in $\Pi_{ij}$ are smooth according to Theorem \ref{th12}.
Smoothness of $y$ follows then by a bootstrap argument.

Finally, uniqueness of isometric immersion with a prescribed second
fundamental form, completes the proof.
\end{proof}

\begin{theorem}\label{th23}
Conditions in Theorem \ref{th12} are further equivalent to the
vanishing of the following three Riemann curvatures of $G$:
\begin{equation}\label{Riem}
R^3_{112} = R^3_{221} = R_{1212} = 0.
\end{equation}
\end{theorem}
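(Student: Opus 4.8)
The plan is to prove Theorem~\ref{th23} by establishing the equivalence between condition (iii) of Theorem~\ref{th12}---the existence of a $W^{2,2}$ isometric immersion of $G_{2\times 2}$ whose second fundamental form $\Pi$ is prescribed by (\ref{zero6})---and the vanishing of the three Riemann curvatures $R^3_{112}, R^3_{221}, R_{1212}$ of $G$. The key tool is the Gauss--Codazzi--Mainardi system: a symmetric $2\times2$ tensor field $\Pi$ on the simply connected $\Omega$ arises as the second fundamental form of an isometric immersion of $G_{2\times2}$ (into $\mathbb{R}^3$) if and only if it satisfies the Gauss equation ($\det\Pi / \det G_{2\times2}$ equals the Gaussian curvature of $G_{2\times2}$) together with the two Codazzi--Mainardi equations relating the covariant derivatives of $\Pi$ to the Christoffel symbols $\gamma^m_{ij}$ of $G_{2\times2}$. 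Since the candidate $\Pi$ in (\ref{zero6}) is explicitly built from the Christoffel symbols $\Gamma^3_{ij}$ of the full $3\times3$ metric $G$, the strategy is to rewrite each of the three compatibility equations for this particular $\Pi$ and to recognize the resulting expressions as exactly (constant multiples of, or combinations involving only) $R^3_{112}$, $R^3_{221}$, and $R_{1212}$.

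First I would recall and fix notation: write $\gamma^m_{ij}$ ($m,i,j\in\{1,2\}$) for the Christoffel symbols of the intrinsic metric $G_{2\times2}$, observe (from the normalization $G(x',x_3)=G(x')$) that the relevant $3\times3$ Christoffel symbols decompose so that $\Gamma^m_{ij}=\gamma^m_{ij}$ for indices $1,2$ plus correction terms involving $G^{m3}$, and record the identity $G^{33}(\det G_{2\times2})=\det G$ (equivalently $1/G^{33} = \det G_{2\times2}/\det G$, which also appears implicitly in the Cosserat vector formula (\ref{b})). With $\Pi_{ij} = -(1/\sqrt{G^{33}})\,\Gamma^3_{ij}$, I would compute $\det\Pi = (1/G^{33})(\Gamma^3_{11}\Gamma^3_{22} - (\Gamma^3_{12})^2)$ and show that the Gauss equation $\det\Pi = \kappa\,\det G_{2\times2}$, with $\kappa$ the Gaussian curvature, is equivalent to $R_{1212}=0$: here one uses that $R_{1212}$ for the full metric $G$ expands as the intrinsic $2$d curvature term coming from the $\gamma$'s plus precisely the cross terms $\sum_m G_{sm}(\Gamma^s_{1m}\Gamma^m_{12} - \cdots)$ that produce the $\Gamma^3_{1\cdot}\Gamma^3_{2\cdot}$ combination. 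Similarly, plugging $\Pi$ into the two Codazzi equations $\nabla_1\Pi_{12} - \nabla_2\Pi_{11} = 0$ and $\nabla_1\Pi_{22} - \nabla_2\Pi_{12}=0$ (covariant derivatives taken with respect to $\gamma$) and expanding, the left-hand sides should reorganize, after substituting the definition of $R^s_{ijk}$, into exactly $\pm(1/\sqrt{G^{33}})\,R^3_{112}$ and $\pm(1/\sqrt{G^{33}})\,R^3_{221}$ (possibly modulo lower-order terms that cancel using the first Bianchi-type symmetry or the already-established Gauss relation). Then, invoking the fundamental theorem of surface theory on the simply connected $\Omega$ (in the $W^{2,2}$ regularity class, for which the Gauss--Codazzi system is still the exact integrability condition --- one may cite the same references used elsewhere, e.g. \cite{HH}, together with the smoothness upshot already noted in the corollary above), one concludes that such an immersion $y$ exists precisely when (\ref{Riem}) holds, which completes the chain (i)$\Leftrightarrow$(ii)$\Leftrightarrow$(iii)$\Leftrightarrow$(\ref{Riem}).

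The main obstacle I anticipate is the bookkeeping in the second step: verifying that the Codazzi--Mainardi expressions for the specific tensor (\ref{zero6}) collapse cleanly onto the curvature components $R^3_{112}$ and $R^3_{221}$ requires carefully tracking how the $3$d Christoffel symbols $\Gamma^3_{ij}$ differentiate and how the $3$d covariant-derivative terms (which run over an index $m\in\{1,2,3\}$) split into a purely $2$d part (matching $\nabla^{\gamma}\Pi$) plus terms that are absorbed by the factor $G^{33}$ and its derivatives. It is conceivable that the naive identification produces extra terms, and that one must use either the Gauss relation already derived, or an algebraic identity among the $\Gamma^3_{ij}$ and $\partial_k(1/\sqrt{G^{33}})$ coming from $\partial_k G^{33} = -\sum G^{3p}G^{3q}\partial_k G_{pq}$, to cancel them; sorting out this cancellation is the technical heart of the argument. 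A secondary, more routine point is confirming that the classical fundamental theorem of surfaces (existence of an immersion given compatible $G_{2\times2}$ and $\Pi$) remains valid when $\Pi$ is merely, a priori, a field of the regularity dictated by $W^{2,2}$ immersions---but since the Christoffel symbols $\Gamma^3_{ij}$ of the smooth metric $G$ are smooth, the candidate $\Pi$ is in fact smooth, and the standard ODE/PDE existence result applies directly, giving a smooth $y$ as in the corollary.
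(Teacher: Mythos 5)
Your plan matches the paper's own proof essentially line for line: both reduce Theorem~\ref{th12}~(iii) to the Gauss and Codazzi--Mainardi system for the candidate $\Pi$ in (\ref{zero6}), both rely on the decomposition $\gamma^s_{kl}=\Gamma^s_{kl}-\frac{G^{3s}}{G^{33}}\Gamma^3_{kl}$ together with the $\partial_i G^{33}$ identity (the paper's (\ref{cs}) and (\ref{zero66})), and both then reorganize the Codazzi equations into $R^3_{112}=R^3_{221}=0$ and the Gauss equation into $R_{1212}=0$. The cancellations you anticipate do indeed work out by direct algebra (no Bianchi identity needed), and your observation that smoothness of $G$ makes $\Pi$ smooth so the classical fundamental theorem of surfaces applies is exactly the resolution the paper uses.
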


\bigskip 

We now give proofs of Theorem \ref{th12} and Theorem \ref{th23}.

\bigskip

\noindent {\bf Proof of Theorem \ref{th12}.}

{\bf 1.} Condition (i) holds when there exists $y\in W^{2,2}(\Omega, \mathbb{R}^3)$ satisfying
(\ref{cinque}) and:
\begin{equation}\label{zeroo}
\mathcal{I}_G(y)=0.
\end{equation}
The equality (\ref{zeroo}) is clearly equivalent to:
$\mathcal{Q}_2(x', (\nabla y)^T\nabla \vec b) = 0$ holding for a.e. $x'\in \Omega.$
Since $\mathcal{Q}_3(F)=0$ iff $F\in\mathbb{R}^{3\times 3}$ is skew-symmetric, it
follows that (\ref{zeroo}) is further equivalent to (\ref{zero3}), hence
proving (ii).

\medskip

{\bf 2.}  Recall that the matrix field $Q$ in Theorem \ref{thm2}
(ii), whose columns are given by vectors $\partial_1y, \partial_2y$
and $\vec b$, satisfies $Q^TQ=G$. Hence, (\ref{zeroo}) becomes:
\begin{equation}\label{zero1}
\begin{split}
0 &= \langle \partial_1y,\partial_1\vec b\rangle
= \partial_1\langle\partial_1 y, \vec b\rangle -\langle \partial_{11}y,
\vec b\rangle = \partial_1 G_{13} - \langle \partial_{11}y,
\vec b\rangle\\
0 & = \langle \partial_2y,\partial_2\vec b\rangle
= \partial_2\langle\partial_2 y, \vec b\rangle -\langle \partial_{22}y,
\vec b\rangle = \partial_2 G_{23} - \langle \partial_{22}y,
\vec b\rangle\\
0 & = \langle \partial_1y,\partial_2\vec b\rangle + \langle\partial_2y,\partial_1\vec b\rangle
= \partial_1 G_{23} +\partial_2 G_{13} - 2\langle \partial_{12}y,
\vec b\rangle.
\end{split}
\end{equation}

Given $F\in\mathbb{R}^{3\times 3}$, by
$F_{tan}\in\mathbb{R}^{2\times 2}$ let us denote the principal $2\times 2$
minor of $F$, and we let $F_{cross} = (F_{13}, F_{23})^T \in\mathbb{R}^2$.  Call $P=G^{-1}$ Then:
\begin{equation}\label{4}
\begin{split}
& G_{tan}P_{tan} + G_{cross}\otimes P_{cross} = \mbox{Id}_2,\qquad 
G_{tan}P_{cross} + P_{33}G_{cross}=0,
\end{split}
\end{equation}
and so consequently:
\begin{equation}\label{44}
(G_{tan})^{-1} = P_{tan} + (G_{tan})^{-1} G_{cross}\otimes P_{cross}
  = P_{tan} - \frac{1}{P_{33}}P_{cross}\otimes P_{cross}.
\end{equation}
We therefore obtain:
\begin{equation*}
G^{33} = P_{33} = \frac{\det G_{2\times 2}}{\det G},\qquad 
(G_{tan})^{-1} G_{cross} = - \frac{1}{P_{33}} P_{cross}=
-\frac{1}{G^{33}} (G^{13}, G^{23})^T,
\end{equation*}
where as the standard notation is used: $ [G^{ij}]_{i,j:1..3} = G^{-1}= P$.
It follows by (\ref{b}) that:
$$\vec b = -\frac{1}{G^{33}} \left(G^{13}\partial_1y +
  G^{23}\partial_2y\right) + \frac{1}{\sqrt{G^{33}}}\vec N,$$
and hence (\ref{zero1}) becomes:
\begin{equation}\label{zero2}
\begin{split}
\partial_1 G_{13} =
& -\frac{1}{G^{33}}\Big(G^{13}\langle \partial_{11}y, \partial_1y\rangle
  + G^{23}\langle \partial_{11}y, \partial_2y\rangle \Big) - 
  \frac{1}{\sqrt{G^{33}}} \Pi_{11} \\
\partial_2 G_{23} =
& -\frac{1}{G^{33}}\Big(G^{13}\langle \partial_{22}y, \partial_1y\rangle
  + G^{23}\langle \partial_{22}y, \partial_2y\rangle \Big) - 
  \frac{1}{\sqrt{G^{33}}} \Pi_{22} \\
\frac{1}{2}\big(\partial_1 G_{23} +\partial_2 G_{13} \big) =
& -\frac{1}{G^{33}}\Big(G^{13}\langle \partial_{12}y, \partial_1y\rangle
  + G^{23}\langle \partial_{12}y, \partial_2y\rangle \Big) - 
  \frac{1}{\sqrt{G^{33}}} \Pi_{12},
\end{split}
\end{equation}
where we used the fact that the coefficients of the second fundamental
form $\Pi$ of the surface $y(\Omega)$ satisfy: $\Pi_{ij}
= \langle \partial_iy,\partial_j\vec N\rangle =
-\langle\partial_{ij}y,\vec N\rangle$ for $i,j:1.. 2.$

Also, note that $\partial_i G = 2\mbox{sym}\big((\partial_i Q)^T
Q\big)$ for $i=1,2$, from where we deduce:
\begin{equation}\label{zero4}
\begin{split}
&\langle\partial_{11}y, \partial_1 y\rangle =
\frac{1}{2}\partial_1G_{11}, \qquad \langle\partial_{22}y, \partial_2 y\rangle =
\frac{1}{2}\partial_2G_{22}, \\
&\langle\partial_{12}y, \partial_1 y\rangle =
\frac{1}{2}\partial_2G_{11}, \qquad \langle\partial_{12}y, \partial_2 y\rangle =
\frac{1}{2}\partial_1G_{22}, \\
\langle\partial_{11}y, \partial_2 &y\rangle =
\partial_1 G_{12} - \frac{1}{2}\partial_2G_{11}, \qquad \langle\partial_{22}y, \partial_1 y\rangle =
\partial_2 G_{12} - \frac{1}{2}\partial_1G_{22}.
\end{split}
\end{equation}

\medskip

{\bf 3.} We now want to rewrite the equations in (\ref{zero2}) and the formulas
(\ref{zero4}) using the Christoffel symbols $\Gamma_{ij}^m,$ $i,j,m=1..3$ of
the metric $G$.
Recall that, since the Levi-Civita connection is
metric-compatible, we have:
\begin{equation}\label{chris}
\partial_i G_{jk} = \sum_{m=1}^3G_{mk}\Gamma_{ij}^m +
\sum_{m=1}^3G_{mj}\Gamma_{ik}^m,
\end{equation}
\begin{equation}\label{chris2}
\partial_i G^{jk} = -\sum_{m=1}^3G^{mk}\Gamma_{mi}^j -
\sum_{m=1}^3G^{mj}\Gamma_{mi}^k.
\end{equation}
Since $\partial_3 G=0$, it follows that:
$$ \sum_{m=1}^3G_{m1}\Gamma^m_{13} =0,\qquad \sum_{m=1}^3G_{m2}\Gamma^m_{23} =0\qquad
\sum_{m=1}^3(G_{m2}\Gamma^m_{13} + G_{m1}\Gamma^m_{23}) =0. $$
Therefore and  in view of (\ref{chris}), (\ref{zero4}) become:
\begin{equation}\label{zero5}
\begin{split}
&\langle\partial_{11}y, \partial_1 y\rangle =
\sum_{m=1}^3G_{m1}\Gamma_{11}^m, \qquad \langle\partial_{22}y, \partial_2 y\rangle =
\sum_{m=1}^3G_{m2}\Gamma^m_{22},\\
&\langle\partial_{12}y, \partial_1 y\rangle =
\sum_{m=1}^3G_{m1}\Gamma^m_{12},\qquad \langle\partial_{12}y, \partial_2 y\rangle =
\sum_{m=1}^3G_{m2}\Gamma_{12}^m,\\
&\langle\partial_{11}y, \partial_2 y\rangle =
\sum_{m=1}^3G_{m2}\Gamma_{11}^m, \qquad \langle\partial_{22}y, \partial_1 y\rangle =
\sum_{m=1}^3G_{m1}\Gamma_{22}^m.
\end{split}
\end{equation}
By (\ref{zero2}), (\ref{zero5}) and (\ref{chris}) we now obtain:
\begin{equation*}
\begin{split}
\Pi_{11} & = -\frac{1}{\sqrt{G^{33}}}\sum_{m=1}^3 (G^{13}G_{m1} +
G^{23}G_{m2}) \Gamma^m_{11} - \sqrt{G^{33}} \sum_{m=1}^3 G_{m3}\Gamma_{11}^m,\\
\Pi_{22} & = -\frac{1}{\sqrt{G^{33}}}\sum_{m=1}^3 (G^{13}G_{m1} +
G^{23}G_{m2}) \Gamma^m_{22} - \sqrt{G^{33}} \sum_{m=1}^3 G_{m3}\Gamma_{22}^m,\\
\Pi_{12} & = -\frac{1}{\sqrt{G^{33}}}\sum_{m=1}^3 (G^{13}G_{m1} +
G^{23}G_{m2}) \Gamma^m_{12} - \sqrt{G^{33}} \sum_{m=1}^3 G_{m3}\Gamma_{12}^m.
\end{split}
\end{equation*}
Since $\sum_{i=1}^3 G^{i3}G_{mi} = \delta_{m3}$, we conclude (\ref{zero6})
and note that it is equivalent to (\ref{zeroo}).
\endproof

\bigskip

\noindent {\bf Proof of Theorem \ref{th23}.}

Clearly, Theorem \ref{th12} (iii) is
equivalent (see \cite{HH} for details) to the satisfaction of the Codazzi-Mainardi equations
for the 2d metric $G_{2\times 2}$ and the second fundamental form $\Pi$:
\begin{equation}\label{CM}
\begin{split}
&\partial_2\left(\frac{1}{\sqrt{G^{33}}} \Gamma_{11}^3\right)
- \partial_1\left(\frac{1}{\sqrt{G^{33}}} \Gamma_{12}^3\right) 
= \frac{1}{\sqrt{G^{33}}} \left( \sum_{m=1}^2\Gamma^3_{1m}\gamma^m_{12} -
\sum_{m=1}^2\Gamma^3_{2m}\gamma^m_{11} \right)\\
&\partial_2\left(\frac{1}{\sqrt{G^{33}}} \Gamma_{12}^3\right)
- \partial_1\left(\frac{1}{\sqrt{G^{33}}} \Gamma_{22}^3\right) 
= \frac{1}{\sqrt{G^{33}}} \left( \sum_{m=1}^2\Gamma^3_{1m}\gamma^m_{22} -
\sum_{m=1}^2\Gamma^3_{2m}\gamma^m_{12} \right)
\end{split}
\end{equation}
together with the Gauss equation:
\begin{equation}\label{Gauss}
\Gamma_{11}^3\Gamma_{22}^3 - (\Gamma_{12}^3)^2 = G^{33} \kappa \det
G_{2\times 2}.
\end{equation}
Above, by $\gamma_{ij}^k$ we denote the Christoffel symbols of the
metric $G_{2\times 2}$, while $\kappa=\kappa(G_{2\times 2})$ is the
Gaussian curvature of $G_{2\times 2}$. 
We now prove that (\ref{CM}) and
(\ref{Gauss}) are equivalent to (\ref{Riem}).

\medskip

{\bf 1.} We first relate the Christoffel symbols $\gamma_{kl}^s$ with
$\Gamma_{kl}^s$. By (\ref{4}), the inverse matrix
$[g^{ij}]_{i,j=1..2}=[(G_{2\times 2})^{-1}]_{ij}$ is given by:
$g^{ij} = G^{ij} - \frac{1}{G^{33}} G^{3i}G^{3j}.$
Hence:
\begin{equation}\label{cs}
\begin{split}
\gamma^s_{kl} & = \frac{1}{2}\sum_{m=1}^2 g^{sm}(\partial_lG_{mk}
+ \partial_kG_{ml} - \partial_mG_{kl}) \\ & = \frac{1}{2} \sum_{m=1}^2 G^{sm}(\partial_lG_{mk}
+ \partial_kG_{ml} - \partial_mG_{kl}) - \frac{1}{2} \frac{G^{3s}}{G^{33}} \sum_{m=1}^2 G^{3m}\big(\partial_lG_{mk}
+ \partial_kG_{ml} - \partial_mG_{kl}\big) \\ & = \Gamma^s_{kl} -
G^{3s}(\partial_lG_{3k} +\partial_kG_{3l}) -
\frac{G^{3s}}{G^{33}}\Big(\Gamma_{kl}^3 - G^{33}(\partial_lG_{3k}
  +\partial_kG_{3l})\Big) \\ & = \Gamma^s_{kl} - \frac{G^{3s}}{G^{33}} \Gamma_{kl}^3.
\end{split}
\end{equation}
Also, note that by (\ref{chris2}), for $i=1,2$ we have: 
\begin{equation}\label{zero66}
\sqrt{G^{33}}\partial_i (\frac{1}{\sqrt{G^{33}}}) =
-\frac{1}{2}\frac{\partial_i G^{33}}{G^{33}} = \frac{1}{G^{33}}
\sum_{m=1}^3G^{m3}\Gamma^3_{mi}.
\end{equation}

\medskip

{\bf 2.}  The first equation in (\ref{CM}) now becomes:
\begin{equation*}
\begin{split}
\partial_2\Gamma_{11}^3 - \partial_1 &\Gamma_{12}^3 - \frac{1}{2}
\left(\frac{\partial_2G^{33}}{G^{33}} \Gamma_{11}^3
- \frac{\partial_1G^{33}}{G^{33}} \Gamma_{12}^3\right) \\ & 
\qquad\qquad = \left( \sum_{m=1}^2\Gamma^3_{1m}\Gamma^m_{12} -
\sum_{m=1}^2\Gamma^3_{2m}\Gamma^m_{11} \right) +
\frac{G^{32}}{G^{33}}(\Gamma^3_{11}\Gamma^3_{22} - (\Gamma_{12}^3)^2)
\end{split}
\end{equation*}
Therefore, in view of (\ref{zero66}) we obtain:
\begin{equation*}
\begin{split}
R^3_{121} & = 
\partial_2\Gamma_{11}^3 - \partial_1 \Gamma_{12}^3 +
\sum_{m=1}^3 (\Gamma^3_{2m}\Gamma^m_{11} - \Gamma^3_{1m}\Gamma^m_{12})
\\ & = \frac{1}{G^{33}} \left(G^{33}(\Gamma^3_{23}\Gamma^3_{11}
  - \Gamma^3_{13}\Gamma^3_{12}) + \frac{1}{2}
(\partial_2G^{33}\Gamma_{11}^3 - \partial_1G^{33}\Gamma_{12}^3) 
+ G^{32}(\Gamma^3_{11}\Gamma^3_{22} - (\Gamma_{12}^3)^2)\right) \\ & =
\frac{1}{G^{33}} \left(\sum_{m=1}^2(G^{m3}\Gamma^3_{m1}\Gamma^3_{12} -
  G^{m3}\Gamma^3_{m2}\Gamma^3_{11}) 
+ G^{32}(\Gamma^3_{11}\Gamma^3_{22} - (\Gamma_{12}^3)^2)\right),
\end{split}
\end{equation*}
which gives $R^3_{121}=0$ by direct inspection.
Similarly, the second equation in (\ref{CM}) yields:
\begin{equation*}
\begin{split}
\partial_2\Gamma_{12}^3 - \partial_1 &\Gamma_{22}^3 - \frac{1}{2}
\left(\frac{\partial_2G^{33}}{G^{33}} \Gamma_{12}^3
- \frac{\partial_1G^{33}}{G^{33}} \Gamma_{22}^3\right) \\ & 
\qquad\qquad = \left( \sum_{m=1}^2\Gamma^3_{1m}\Gamma^m_{22} -
\sum_{m=1}^2\Gamma^3_{2m}\Gamma^m_{12} \right) +
\frac{G^{31}}{G^{33}}(\Gamma^3_{11}\Gamma^3_{22} - (\Gamma_{12}^3)^2)
\end{split}
\end{equation*}
Consequently, using (\ref{zero66}) as before:
\begin{equation*}
\begin{split}
R^3_{221} & = 
\partial_2\Gamma_{12}^3 - \partial_1 \Gamma_{22}^3 +
\sum_{m=1}^3 (\Gamma^3_{2m}\Gamma^m_{12} - \Gamma^3_{1m}\Gamma^m_{22})
\\ & = \frac{1}{G^{33}} \left(G^{33}(\Gamma^3_{23}\Gamma^3_{12}
  - \Gamma^3_{13}\Gamma^3_{22}) + \frac{1}{2}
(\partial_2G^{33}\Gamma_{12}^3 - \partial_1G^{33}\Gamma_{22}^3) 
- G^{31}(\Gamma^3_{11}\Gamma^3_{22} - (\Gamma_{12}^3)^2)\right) \\ & =
\frac{1}{G^{33}} \left(\sum_{m=1}^2(G^{m3}\Gamma^3_{m1}\Gamma^3_{22} -
  G^{m3}\Gamma^3_{m2}\Gamma^3_{12}) 
- G^{31}(\Gamma^3_{11}\Gamma^3_{22} - (\Gamma_{12}^3)^2)\right),
\end{split}
\end{equation*}
which implies $R^3_{221}=0$.

\medskip

{\bf 3.} We now turn to proving equivalence of (\ref{Gauss}) with
$R_{1212}=0$. Denoting $r^s_{ijk}$ and $r_{sijk}$ the Riemann
curvatures of the metric $G_{2\times 2}$ (where $i,j,k,s=1..2$) we
obtain:
$$\kappa\det G_{2\times 2} = r_{1212}= G_{11}r^1_{212} + G_{12}r^2_{212}$$
Further, for $i=1,2$ we get by (\ref{cs}) and (\ref{chris2}):
\begin{equation*}
\begin{split}
r^i_{212}  = & ~\partial_1(\Gamma^i_{22} - \frac{G^{3i}}{G^{33}}\Gamma_{22}^3) -
\partial_2 (\Gamma^i_{12} - \frac{G^{3i}}{G^{33}}\Gamma_{12}^3) \\ &
+\sum_{m=1}^2(\Gamma^i_{1m} - \frac{G^{3i}}{G^{33}}\Gamma_{1m}^3)
(\Gamma^m_{22} - \frac{G^{3m}}{G^{33}}\Gamma_{22}^3) 
- \sum_{m=1}^2(\Gamma^i_{2m} - \frac{G^{3i}}{G^{33}}\Gamma_{2m}^3)
(\Gamma^m_{12} - \frac{G^{3m}}{G^{33}}\Gamma_{12}^3)\\
= & ~R^i_{212} - \frac{G^{3i}}{G^{33}}(\partial_1\Gamma^3_{22} - \partial_2 \Gamma^3_{12})  
+\left(\frac{G^{1i}}{G^{33}} - \frac{G^{3i} G^{31}}{(G^{33})^2}\right)
(\Gamma^3_{11} \Gamma_{22}^3 - (\Gamma^3_{12})^2) \\ & - \frac{G^{3i}}{G^{33}} 
\sum_{m=1}^3 (\Gamma^3_{1m}\Gamma^m_{22} - \Gamma^3_{2m}\Gamma^m_{12}).
\end{split}
\end{equation*}
Consequently, the Gauss equation (\ref{Gauss}) yields:
\begin{equation*}
\begin{split}
R_{1212}  & = G_{11}R^1_{212} + G_{12}R^2_{212} + G_{13} R^3_{212}\\ & = \kappa\det
G_{2\times 2} + G_{11}(R^1_{212} - r^1_{212}) + G_{12}(R^2_{212} -
r^2_{212}) + G_{13} R^3_{212}\\ & = G_{13} R^3_{212} + \frac{1}{G^{33}} (\Gamma^3_{11} \Gamma_{22}^3 -
(\Gamma^3_{12})^2) - G_{13} (\partial_1\Gamma^3_{22} - \partial_2
\Gamma^3_{12})  \\ & \quad - \left( \frac{(1-G^{13}G_{13})}{G^{33}} + \frac{G^{33}G_{13} G^{31}}{(G^{33})^2}\right)
(\Gamma^3_{11} \Gamma_{22}^3 - (\Gamma^3_{12})^2)
 - G_{31} \sum_{m=1}^3 (\Gamma^3_{1m}\Gamma^m_{22} -
 \Gamma^3_{2m}\Gamma^m_{12}) \\ &
= G_{13} R^3_{212} - G_{13} \left(\partial_1\Gamma^3_{22} - \partial_2
\Gamma^3_{12} + \sum_{m=1}^3 (\Gamma^3_{1m}\Gamma^m_{22} -
 \Gamma^3_{2m}\Gamma^m_{12}) \right) = 0.
\end{split}
\end{equation*}
Note that we did not use the fact that $R^3_{212}=0$ in the above
calculation. This completes the proof of Theorem \ref{th23}.
\endproof

\section{Examples}\label{examples}

\begin{example}\label{ex1} 
Let $\lambda:\bar\Omega\to\mathbb{R}$ be a smooth positive function and define:
\begin{equation}\label{e1}
G(x', x_3)= G(x') = \mbox{diag} \big(1, 1, \lambda(x')\big).
\end{equation}
Clearly, the 2d metric $G_{2\times 2}=\mbox{Id}_2$ has an isometric
immersion $y_0(x')=x'$ with the second fundamental form $\Pi = 0$.
On the other hand:
$$\forall i,j:1..2\qquad \Gamma^3_{ij}= \frac{1}{2\lambda} (\partial_i
G_{3j} + \partial_j G_{3i}) = 0,$$
and we see that both conditions (i) and (ii) in  Theorem \ref{th12}
are satisfied,  so that $\mathcal{I}_G(y_0) =0$.

We can further check directly that the only possibly non-zero Christoffel symbols are:
$$\forall i:1..2\qquad \Gamma^i_{33}= -\frac{1}{2}\partial_i\lambda,
\qquad \Gamma^3_{i3} = \frac{1}{2\lambda} \partial_i\lambda.$$
In particular, it easily follows that: $R^3_{121} = R^3_{221} = R_{1212} =
0$, which is consistent with Theorem \ref{th23}.
At the same time $G$ is, in general,  non-immersable. To see this,
recall that the Ricci curvatures are given by:
$R_{ij} = \sum_{l=1}^3(\partial_l\Gamma_{ij}^l
- \partial_j\Gamma_{il}^l) +  \sum_{l, m=1}^3
(\Gamma_{ij}^l\Gamma_{lm}^m - \Gamma_{il}^m\Gamma_{jm}^l)$, for
$i,j=1\ldots 3$. In the present case, we have:
\begin{equation*}
\begin{split}
& R_{11} = \frac{1}{4\lambda^2} \big((\partial_1\lambda)^2 -
2\lambda(\partial_{11}\lambda)\big), \quad R_{12}=
\frac{1}{4\lambda^2} \big((\partial_1\lambda)(\partial_2\lambda) - 
2\lambda(\partial_{12}\lambda)\big), \quad  R_{13} = R_{23} = 0,\\
& R_{22} = \frac{1}{4\lambda^2} \big((\partial_2\lambda)^2 -
2\lambda(\partial_{22}\lambda)\big), \quad 
R_{33} = - \frac{1}{4\lambda} \big((\partial_1\lambda)^2 -
2\lambda(\partial_{11}\lambda)  + (\partial_2\lambda)^2 -
2\lambda(\partial_{22}\lambda)\big).
\end{split}
\end{equation*}
We hence see that $G$ is immersable if and only if:
\begin{equation}\label{pies}
M:=\nabla^2\lambda -
\frac{1}{2\lambda}\nabla\lambda\otimes\nabla\lambda \equiv
0\quad\mbox{ in } \Omega.
\end{equation}

Let us now consider the scaling of the $3$d non-Euclidean
energy studied in this paper:
$$E^h(u^h) = \frac{1}{h}\int_{\Omega^h} W(\nabla
u^h\sqrt{G}^{-1})~\mbox{d}x,\qquad \sqrt{G}^{-1}=\mbox{diag}(1,1,\frac{1}{\sqrt{\lambda}}),$$
at the following sequence of smooth deformations of $\Omega^h$:
\begin{equation}\label{koko}
u^h(x', x_3) = x' + \big(- \frac{x_3^2}{4}\partial_1\lambda, 
- \frac{x_3^2}{4}\partial_2\lambda, \sqrt{\lambda}~x_3\big)^T.
\end{equation}
We have: $\big((\nabla u^h)\sqrt{G}^{-1}\big)_{2\times 2} = \mbox{Id}_2 -
\frac{x_3^2}{4} \nabla^2\lambda$, and: 
$$\big((\nabla u^h)\sqrt{G}^{-1}\big) e_3 = \big(- \frac{x_3}{2}\frac{\partial_1\lambda}{\sqrt{\lambda}}, 
- \frac{x_3}{2}\frac{\partial_2\lambda}{\sqrt{\lambda}}, 1\big)^T,
\qquad \big((\nabla u^h)\sqrt{G}^{-1}\big)^T e_3 = \big(\frac{x_3}{2}\frac{\partial_1\lambda}{\sqrt{\lambda}}, 
\frac{x_3}{2}\frac{\partial_2\lambda}{\sqrt{\lambda}}, 1\big)^T.$$
Recall that for every $F=\mbox{Id}_3 + \mathcal{A}\in\mathbb{R}^{3\times 3}$
when $\mathcal{A}$ is sufficiently small, we have: 
$ \mbox{dist}(F, SO(3))=
|\sqrt{F^TF} - \mbox{Id}|  = |\sqrt{\mbox{Id} + 2\mbox{sym }
  \mathcal{A} + \mathcal{A}^T\mathcal{A}} - \mbox{Id}| = 
\big |\mbox{sym } \mathcal{A} + \frac{1}{2}\mathcal{A}^T\mathcal{A} + {o}(|\mbox{sym } \mathcal{A} +
\frac{1}{2}\mathcal{A}^T\mathcal{A}|)\big|$.
Consequently:
$$W\big((\nabla u^h)\sqrt{G}^{-1}\big) \leq C\mbox{dist}^2\big((\nabla
u^h)\sqrt{G}^{-1}, SO(3)\big)\leq C x_3^4, $$
and therefore:
\begin{equation}\label{ex_conclusion}
\inf E^h \leq E^h(u^h) \leq \frac{C}{h} \int_{-h/2}^{h/2}
x_3^4~\mbox{d}x_3 = Ch^4,
\end{equation}
for any choice of $\lambda$ in (\ref{e1}). In section \ref{exactscaling}
we will show that this scaling is also optimal, provided that the condition
(\ref{pies}) does not hold.
\end{example}

\medskip

\begin{example}\label{ex2}
Let $\lambda:\bar\Omega\to\mathbb{R}$ be a smooth positive function
and consider the metric:
\begin{equation}\label{e2}
G(x', x_3)= G(x') =\lambda(x') \mbox{Id}_3. 
\end{equation}
One checks directly that $\Gamma^i_{kl} = \frac{1}{2\lambda}
(\delta_{ik}\partial_l\lambda + \delta_{il}\partial_k\lambda -
\delta_{kl}\partial_i\lambda) = \delta_{ik}\partial_l f +
\delta_{il}\partial_k f - \delta_{kl}\partial_i f$, where we denote
$f=\frac{1}{2}\log\lambda$. We directly compute:
$$R^3_{112}= R^3_{221} = 0\quad \mbox{ and } \quad R_{1212} =
-\frac{1}{2}\lambda\Delta(\log \lambda) = \lambda^2\kappa(\lambda\mbox{Id}_2).$$
Therefore, condition (\ref{Riem}) which is equivalent to $\min
\mathcal{I}_G=0$ according to Theorem \ref{th23}, holds if and only if:
\begin{equation}\label{e22}
\Delta(\log\lambda) = 0,
\end{equation}
or equivalently, when the 2d metric $G_{2\times 2}=\lambda\mbox{Id}_2$
is flat (immersable in $\mathbb{R}^2$). Note also that since
$\Gamma^3_{ij}=0$ for $i,j:1..2$ then this is precisely the case when
(\ref{zero6}) of Theorem \ref{th12} is satisfied.

We now compute the Ricci curvature of $G$ using the conformal rescaling formula:
\begin{equation*}
\begin{split}
\mbox{Ric}(G) & = \mbox{Ric}(e^{2f}\mbox{Id}_3)= -(\nabla^2f -\nabla
f\otimes\nabla f) - \big(\Delta f - |\nabla f|^2\big)\mbox{Id}_3 \\ & =
\Big(-2(\Delta f)\mbox{Id}_2 + \mbox{cof}(\nabla^2f - \nabla f\otimes
\nabla f)\Big)^* - (\Delta f + |\nabla f|^2) ~e_3\otimes e_3.
\end{split}
\end{equation*}
We observe that $G$ is immersable iff $\mbox{Ric}(G)=0$,
i.e. when $\nabla f = 0$, which is equivalent to:
\begin{equation}\label{e23}
\lambda \equiv \mbox{const}.
\end{equation}
Clearly (\ref{e23}) implies (\ref{e22}), but conversely: there exist nonimmersable
metrics $G$ for which (\ref{e22}) holds i.e. for which the minimum of
the residual energy $\mathcal{I}_G$ is $0$, and it is attained by the
unique (up to rigid motions) smooth isometric immersion
$y:\Omega\to\mathbb{R}^2$  of $\lambda\mbox{Id}_2$ .

As in Example \ref{ex1}, we now consider scaling of the 3d energies
$E^h$, assuming (\ref{e22}). Define:
\begin{equation}\label{ciag}
u^h(x', x_3) = y(x')^* + x_3\sqrt{\lambda}e_3 -\frac{x_3^2}{4}
\Big((\nabla y)^{-1, T}\nabla\lambda\Big)^*.
\end{equation}
We easily compute that $\big(\nabla u^h(x', x_3)\big)_{2\times 2} = \nabla y +
\mathcal{O}(x_3^2)$ and:
\begin{equation*}
\begin{split}
\big(\nabla u^h(x', x_3)\big)e_3 & = 
\Big(- \frac{x_3}{2} \langle (\nabla y)^{-1, T}\nabla \lambda,
e_1\rangle, - \frac{x_3}{2} \langle (\nabla y)^{-1, T}\nabla \lambda, e_2\rangle, 
\sqrt{\lambda}\Big)^T, \\
\big(\nabla u^h(x', x_3)\big)^T e_3 & = 
\Big(\frac{x_3}{2\sqrt{\lambda}} \partial_1 \lambda, \frac{x_3}{2\sqrt{\lambda}} \partial_2 \lambda,
\sqrt{\lambda}\Big)^T.
\end{split}
\end{equation*}
Since $(\nabla y)^T\nabla y = \lambda\mbox{Id}_2$ it follows that:
\begin{equation*}
\begin{split}
\Big((\nabla u^h)^T \nabla u^h\Big)_{2\times 2} & = \lambda \mbox{Id}_2 +\mathcal{O}(x_3^2)\\
\forall i=1..2\quad \Big((\nabla u^h)^T \nabla u^h\Big)_{3i} & = -\frac{x_3}{2}
\langle \partial_i y, (\nabla y)^{-1, T}\nabla \lambda\rangle +
\frac{x_3}{2} \partial_i\lambda + \mathcal{O}(x_3^2) = \mathcal{O}(x_3^2) \\
\Big((\nabla u^h)^T \nabla u^h\Big)_{33} & = \lambda + \mathcal{O}(x_3^2).
\end{split}
\end{equation*}
Therefore it follows,  by polar decomposition theorem:
\begin{equation*}
\begin{split}
W\big((\nabla u^h)\sqrt{G}^{-1}\big) & \leq C\mbox{dist}^2\Big(\frac{1}{\sqrt{\lambda}}\nabla
u^h, SO(3)\Big)\leq C \mbox{dist}^2\Big(\sqrt{\frac{1}{\lambda}(\nabla
u^h)^T\nabla u^h}, SO(3)\Big) \\ & 
=  C \mbox{dist}^2\Big(\sqrt{\mbox{Id}_3 + \mathcal{O}(x_3^2)},
SO(3)\Big) \leq C x_3^4,
\end{split}
\end{equation*}
which again yields the scaling $h^4$, precisely as in (\ref{ex_conclusion}).
\end{example}

\begin{remark}
A more general example of $G$ in the same spirit as above,
is: $G(x')=G_{2\times2}^*+\lambda(x')e_3\otimes e_3$ with
$G_{2\times 2}$  immersable in $\mathbb{R}^2$. Since $\Gamma^3_{ij}=0$
for $i,j=1..2$, we see that $\min\mathcal{I}_G = 0$, in virtue of Theorem \ref{th12}.
On the other hand, one can check directly that taking smooth $y:\Omega\to\mathbb{R}^2$ such that $(\nabla
y)^T \nabla y = G_{2\times2}$ and defining the $3$d deformations $u^h$
as in (\ref{ciag}), it again follows: $W\big((\nabla
u^h)\sqrt{G}^{-1}\big)  \leq  C x_3^4$. Consequently, the
same energy scaling as in (\ref{ex_conclusion}) is valid here as well.
\end{remark}

\medskip

\begin{example}\label{ex21} 
{\bf (i).}  Let $\lambda_1, \lambda_2, \lambda_3:\bar\Omega\to\mathbb{R}$ be 
smooth functions such that $\lambda_3 > \lambda_1^2 + \lambda_2^2$. Define:
\begin{equation}\label{e21}
G(x', x_3)= G(x') = \left[\begin{array}{ccc} 1 & 0 & \lambda_1\\ 0 & 1
    & \lambda_2 \\ \lambda_1 & \lambda_2 & \lambda_3 \end{array}\right].
\end{equation}
We directly compute the Christoffel symbols involved in (\ref{zero6}):
\begin{equation}\label{pom}
\Gamma_{11}^3 = \frac{\partial_1\lambda_1}{\lambda_3 - (\lambda_1^2
  + \lambda_2^2)}, \qquad \Gamma_{12}^3 =
\frac{\frac{1}{2}(\partial_1\lambda_2 + \partial_2\lambda_1)}{\lambda_3 - (\lambda_1^2
  + \lambda_2^2)}, \qquad \Gamma_{22}^3 =
\frac{\partial_2\lambda_2}{\lambda_3 - (\lambda_1^2 + \lambda_2^2)}. 
\end{equation}
Hence, if $|\partial_1\lambda_1| + |\partial_2\lambda_2|\not\equiv 0$
in $\Omega$,  it follows by Theorem \ref{th12} (iii), that the isometric
immersion $y_0(x')=x'$ of $G_{2\times 2}=\mbox{Id}_2$ is certainly not
the immersion for which $\mathcal{I}_G(y_0) = 0$. Of course, this does not
preclude the possibility that there exists another immersion
$y:\Omega\to \mathbb{R}^3$ of $G_{2\times 2}$ (now necessarily non-flat), for
which $\mathcal{I}_G(y)=0$.  As we shall see below, both scenarios are possible.

\medskip

{\bf (ii).}  Consider a subcase of (\ref{e21}), where $\lambda_1=0$
and $\lambda_3 = \lambda_2^2 +1$, so that:
\begin{equation*}
G(x', x_3)= G(x') = \left[\begin{array}{ccc} 1 & 0 & 0\\ 0 & 1
    & \lambda_2 \\ 0 & \lambda_2 & \lambda_2^2 + 1 \end{array}\right].
\end{equation*}
Consequently, by (\ref{pom}):
\begin{equation}\label{pom2} 
\Gamma_{11}^3 = 0, \qquad \Gamma_{12}^3 =
\frac{1}{2} \partial_1\lambda_2, \qquad \Gamma_{22}^3 = \partial_2\lambda_2. 
\end{equation}
We further compute:
$$ \Gamma_{11}^1 = \Gamma_{11}^2 = 0, 
\qquad \Gamma_{13}^3 = \frac{1}{2}\lambda_2\partial_1\lambda_2, 
\qquad \Gamma_{12}^2 = -\frac{1}{2}\lambda_2\partial_1\lambda_2, $$
and:
$$R_{112}^3 = \partial_1\Gamma_{12}^3 + (\Gamma_{12}^3\Gamma_{12}^2 +
\Gamma_{13}^3\Gamma_{12}^3 ) - (\Gamma_{12}^3\Gamma_{11}^1 +
\Gamma_{22}^3\Gamma_{11}^2 ) = \frac{1}{2}\partial_{11}\lambda_2.$$
We see that when $\partial_{11}\lambda_2\not\equiv 0$ in $\Omega$, then
$\min\mathcal{I}_G>0$, in view of Theorem \ref{th23}. In particular,
there is no isometric immersion of $G_{2\times 2}$ satisfying (\ref{zero6}).

\medskip

{\bf (iii).}  Consider now a further subcase of (\ref{e21}), with
$\lambda_1 =0$, $\lambda_2 = -x_2$ and $\lambda_3 = x_2^2 + 1$, so that:
\begin{equation*}
G(x', x_3)= G(x_2) = \left[\begin{array}{ccc} 1 & 0 & 0\\ 0 & 1
    & - x_2 \\ 0 & -x_2 & x_2^2 + 1 \end{array}\right].
\end{equation*}
By (\ref{pom2}) we get: $\Gamma_{11}^3 = \Gamma_{12}^3 = 0$
and  $\Gamma_{22}^3 = -1$.
Let $y(x_1, x_2) = (x_1, \sin x_2, \cos x_2)$ be an isometric
immersion of $G_{2\times 2}$ into a cylinder. This immersion has the second
fundamental form $\Pi$:
$$\Pi = (\nabla y)^T \nabla \vec N = \left[\begin{array}{cc} 0 & 0\\ 0
    & 1 \end{array}\right] =
-\frac{1}{\sqrt{G^{33}}}\left[\begin{array}{cc} \Gamma^3_{11} &
    \Gamma^3_{12} \\ \Gamma^3_{12} & \Gamma^3_{22} \end{array}\right]. $$ 
Hence, by Theorem \ref{th12} (iii) it follows that $\mathcal{I}_G(y) = 0$.

In particular, $R_{112}^3 = R_{221}^3 = R_{1212}\equiv 0$ in
$\Omega$. Metric $G$ is, however, nonimmersable, as a direct
calculation of its scalar Ricci curvature shows:
$$S = G^{11} R_{11} + G^{22} R_{22} + G^{33}R_{33} + 2G^{23}R_{23} =
-2 + 2x_2^2 \not\equiv 0.$$

\end{example}

\medskip

\begin{example}\label{ex3}
In this example we will have $G_{2\times 2}$ nonimmersable in
$\mathbb{R}^2$. Let $\bar \Omega\subset \{(x_1, x_2)\in\mathbb{R}^2;~x_1>x_2>0\}$ and define $G$:
\begin{equation}\label{e3}
G(x_1, x_2)=\left[\begin{array}{ccc} 1+x_1^2 & x_1x_2 & b_1 + x_1b_3\\
x_1x_2 & 1+ x_2^2 & b_2+x_2b_3\\
b_1+x_1b_3 & b_2 + x_2b_3 & |\vec b|^2 \end{array}\right] \quad
\mbox{where } \vec b = \Big( -\frac{1}{3}x_1^3, \frac{1}{3}x_2^3, \frac{1}{2}(x_1^2-x_2^2)\Big)^T. 
\end{equation}
We see that:
$$y(x_1, x_2) = \big(x_1, x_2, \frac{1}{2}(x_1^2+x_2^2)\big)$$
is an isometric immersion of $G_{2\times 2}$ in $\mathbb{R}^3$.
Therefore: 
$$\kappa(G_{2\times 2}) = \frac{\partial_{11}y_3 \partial_{22}y_3 -
  (\partial_{12}y_3)^2}{(1+|\nabla y_3|^2)^2} = \big(1+x_1^2 +
x_2^2\big)^{-2}\neq 0.$$
By Theorem \ref{th12}, we have: $\min \mathcal{I}_G =
0$ iff (\ref{zero3}) holds. This is equivalent to 
$\mbox{sym}(\nabla \vec b_{tan} + \nabla b_3\otimes (x_1, x_2)) = 0$,
and further to:
$$\mbox{sym}(\nabla b_3\otimes (x_1, x_2)) = -\mbox{sym}\nabla \vec b_{tan}.$$
Given a scalar field $b_3$, there exists $\vec b_{tan}$ so that the above
condition is satisfied iff:
$$0 = \mbox{curl}^T\mbox{curl}(\nabla b_3\otimes (x_1, x_2)) = -\Delta b_3.$$
We see that indeed $b_3$ in (\ref{e3}) is harmonic, and that $(b_1,
b_2)$ satisfiy: $\mbox{sym}\nabla \vec b_{tan} = \mbox{diag}(-x_1^2,
x_2^2)$, which implies $\mathcal{I}_G(y)=0$, for the $3$d metric $G=Q^TQ$, where
$Q=\left[ \partial_1y, \partial_2 y, \vec b\right]$. Note also that $\det
Q>0$ in $\bar\Omega$. Hence, $G$ is
given by (\ref{e3}). One can check that $G$ is nonimmersable in $\mathbb{R}^3$, by
calculating its Ricci curvature (we have used Maple\textregistered). In particular,
the scalar Ricci curvature of $G$ equals:
$$S=\sum_{i,j=1..3} G^{ij}R_{ij} = \frac{12}{2x_1^2 + 2x_2^2 + 3}\neq 0.$$
\end{example}

\medskip

In section \ref{lg} we will discuss other examples of types of metric $G$,
motivated by the modelling of the nematic glass.

\section{Scaling analysis for Example \ref{ex1}}\label{exactscaling}

In this section we will prove the optimality of the scaling $h^4$ in 
(\ref{ex_conclusion}) for every non-immersable metric tensor $G$ of the form 
in Example \ref{ex1}. 

\begin{theorem}\label{optimal}
Let $\lambda:\bar\Omega\to\mathbb{R}$ be a smooth positive function
such that: 
\begin{equation*}\label{ges}
M:=\nabla^2\lambda - \frac{1}{2\lambda} \nabla\lambda\otimes
\nabla\lambda \not\equiv 0 \quad \mbox{ in } \Omega.
\end{equation*}
Define $G(x', x_3) = G(x') = \mathrm{diag} (1,1,\lambda(x'))$. Then
$G$ is non-immersable and
there exist $c,C>0$ such that:
\begin{equation}\label{glupiages}
ch^4\leq \inf E^h \leq Ch^4.
\end{equation}
\end{theorem}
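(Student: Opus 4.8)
The plan is to establish the two halves of \eqref{glupiages} separately; the upper bound has already been proved in Example \ref{ex1} via the explicit deformation \eqref{koko}, so the real content is the lower bound $\inf E^h \geq ch^4$ under the hypothesis $M \not\equiv 0$. Non-immersibility of $G$ itself was also already recorded in Example \ref{ex1} (it is immersible iff $M \equiv 0$), so that part is a citation back to \eqref{pies}.

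For the lower bound I would argue by contradiction and a compactness/liminf scheme analogous to Theorem \ref{thm2}, but now at the finer scaling $h^4$. Suppose $u^h$ is a sequence with $E^h(u^h) \leq Ch^4$; in particular $E^h(u^h) \leq Ch^2$, so Theorem \ref{thm2} applies and, after subtracting translations and rescaling, $y^h \to y$ strongly in $W^{1,2}(\Omega^1,\mathbb R^3)$ with $(\nabla y)^T \nabla y = G_{2\times 2} = \mathrm{Id}_2$; moreover $\mathcal I_G(y) = 0$ (since $\liminf h^{-2} E^h \le \liminf C h^2 = 0$), so by Theorem \ref{th12} (and the Corollary after it) $y$ is, up to a rigid motion, the flat immersion $y(x') = x'$ with Cosserat vector $\vec b = \vec N = e_3$. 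Next I would invoke the approximation Lemma \ref{approx}, producing rotation-like fields $Q^h$, and then set up the rescaled strains $S^h = h^{-1}((R^h)^T \nabla u^h(x',hx_3) A^{-1} - \mathrm{Id})$ as in Step 4 of the proof of Theorem \ref{thm2}; from $E^h \le Ch^4$ one now gets the sharper estimate $\|S^h\|_{L^2(\Omega^1)} \le Ch$, hence $\tilde S^h := S^h/h$ is bounded in $L^2$ and converges weakly to some $\bar S$. The point of the finer scaling is that $R^h \to \mathrm{Id}$ (since the limiting rotation field $QA^{-1}$ equals $[\,e_1\,|\,e_2\,|\,e_3\,]A^{-1} = A A^{-1}/\ldots$ — more precisely $Q = \mathrm{diag}(1,1,\sqrt\lambda)$ so $QA^{-1} = \mathrm{Id}_3$), and moreover the first-order corrections are controlled: one should get a Friesecke–James–Müller-type second-order expansion showing that $\mathrm{sym}\,\bar S$ in the $2\times 2$ block is forced to absorb a nontrivial explicit inhomogeneous contribution coming from the $x_3$-derivatives and from the fact that $A^{-1} = \mathrm{diag}(1,1,\lambda^{-1/2})$ varies in $x'$. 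Expanding $W(\mathrm{Id} + h\tilde S^h h) = W(\mathrm{Id}+ h^2\tilde S^h)$ (Taylor, on good sets where $|S^h|^2 \le h$) yields $\liminf h^{-4} E^h \ge \tfrac12 \int_{\Omega^1} \mathcal Q_3(\bar S)$.

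The crux is then to show that $\int_{\Omega^1}\mathcal Q_3(\bar S) > 0$, i.e. that $\bar S$ cannot be pointwise skew-symmetric. This is where the hypothesis $M \not\equiv 0$ enters. The mechanism is: the limiting strain $\bar S$ must be compatible in the sense that $\nabla u^h$ is a gradient, and passing to the limit in difference quotients (as in Step 5 of Theorem \ref{thm2}, now one order finer) one derives that the $2\times 2$ block of the effective second-order strain is of the form $\mathrm{sym}\big((A\bar S(x',x_3)A)_{2\times 2}\big) = \mathrm{sym}\,E_0(x') + x_3\,(\text{bending-type term}) + \tfrac{x_3^2}{2} N(x')$ where $N$ is an explicitly computable matrix field built from second derivatives of $\lambda$ — indeed the recovery computation in Example \ref{ex1} gives exactly $\big((\nabla u^h)\sqrt G^{-1}\big)_{2\times 2} = \mathrm{Id}_2 - \tfrac{x_3^2}{4}\nabla^2\lambda$ together with a cross term contributing, after symmetrizing $\tfrac12 \mathcal A^T\mathcal A$, the piece $\tfrac{x_3^2}{8\lambda}\nabla\lambda\otimes\nabla\lambda$; so the \emph{unavoidable} quadratic-in-$x_3$ part of $\mathrm{sym}\,\bar S$ in the midplate block is $-\tfrac{x_3^2}{4}\big(\nabla^2\lambda - \tfrac{1}{2\lambda}\nabla\lambda\otimes\nabla\lambda\big) = -\tfrac{x_3^2}{4} M$. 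Since the symmetric part of $\bar S$ cannot be killed by any choice of free warping profile when $M\not\equiv 0$, and since $\mathcal Q_3$ (equivalently $\mathcal Q_2$) is positive definite on symmetric matrices, integrating in $x_3 \in (-1/2,1/2)$ produces $\int \mathcal Q_3(\bar S) \ge c\int_\Omega |M|^2 > 0$, a contradiction with $h^{-4}E^h \to 0$. Thus $\inf E^h \ge ch^4$.

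The main obstacle I anticipate is making rigorous the claim that the quadratic-in-$x_3$ part of $\mathrm{sym}\,\bar S$ is \emph{forced} to contain the term $-\tfrac{x_3^2}{4}M$ regardless of the deformation — i.e. that the competitor cannot cheat by distributing energy differently through the thickness or by exploiting the allowed $O(h)$ rotations. This requires a careful second-order rigidity argument: one must show $R^h = \mathrm{Id} + O(h)$ with the $O(h)$ skew correction $\omega^h$ itself converging (to control $\mathrm{sym}(R^{h,T}\nabla u^h A^{-1})$ to order $h^2$), and then extract, via difference quotients in $x_3$ and the compatibility (curl-free) structure of $\nabla u^h$, the precise $x_3$-dependence of the effective strain — the key being that the $x_3^2$ coefficient is a geometric quantity determined by $A^{-1}(x')$ and not a free parameter. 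This is morally the same ``the quadratic term is the prestrain incompatibility'' phenomenon that underlies the derivation of von Kármán-type theories, and I would model the argument on the interpolation/slicing techniques of \cite{FJMhier} and \cite{lepa}, adapted to track one further order in $h$.
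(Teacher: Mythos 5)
The overall shape of your argument matches the paper's — upper bound and non-immersibility cited to Example \ref{ex1}, and a liminf/compactness scheme one order finer than Theorem \ref{thm2} for the lower bound — and you correctly guess that the unavoidable quadratic-in-$x_3$ contribution to the midplate strain is $-\tfrac{x_3^2}{4}M$, composed of a $\nabla^2\lambda$ piece from the warping and a $\tfrac{1}{2\lambda}\nabla\lambda\otimes\nabla\lambda$ piece from the skew correction. However, there is a genuine gap exactly where you flag the ``main obstacle'': the step ``from $E^h\le Ch^4$ one now gets the sharper estimate $\|S^h\|_{L^2}\le Ch$'' does not follow from Lemma \ref{approx}. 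Estimate (i) of that lemma has a hard $O(h^2)$ floor coming from the FJM slicing (the term $Ch^2$ on the right-hand side is independent of the energy), so with $E^h\le Ch^4$ one still only gets $\|S^h\|_{L^2}\le C$, not $\le Ch$. Your rescaled strain $\tilde S^h=S^h/h$ therefore need not be bounded, and the whole chain of weak limits and difference quotients after that point has no foundation. In other words, the ordinary approximation lemma compares $\nabla u^h$ to a rotation field $Q^h$, but what is needed at scale $h^4$ is a comparison to the first-order Taylor expansion $R^h(Q+x_3B)$ of a competitor deformation.

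The paper supplies precisely this missing second-order rigidity. The key device you are missing is the explicit diffeomorphism $\phi(x',x_3)=(x'-\tfrac{x_3^2}{4}\nabla\lambda,\sqrt\lambda\,x_3)$: since $(\nabla\phi)A^{-1}=\mathrm{Id}_3+x_3BA^{-1}-\tfrac{x_3^2}{4}(\nabla^2\lambda)^*$ with $BA^{-1}$ skew, one has $(\nabla\phi)A^{-1}=(\mathrm{Id}+\mathcal O(x_3^2))R(x)$ with $R(x)\in SO(3)$, so a change of variables by $\phi$ turns the non-Euclidean energy of $u^h$ into (essentially) the Euclidean distance of $\nabla(u^h\circ\phi^{-1})$ from $SO(3)$, to which the geometric rigidity estimate applies with no loss. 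This produces Lemma \ref{krowa}: rotation fields $R^h$ with $\tfrac1h\int_{\Omega^h}|\nabla u^h-R^h(Q+x_3B)|^2\le Ch^4$ and $\int|\nabla R^h|^2\le Ch^2$. Only then can one set $\mathcal G^h=h^{-2}\big((R^h)^T\nabla u^h-(Q+hx_3B)\big)A^{-1}$ (bounded in $L^2$), run the difference-quotient argument to get \eqref{gest}, and Taylor-expand $W$ at $\mathrm{Id}$ after conjugating by $e^{-hx_3BA^{-1}}$ to pick up the $(BA^{-1})^2$ contribution. Your identification of the final term $-\tfrac{x_3^2}{4}M$ is correct, but without the change-of-variables/refined-rigidity step (or an equivalent substitute), the argument does not close; citing Lemma \ref{approx} at that point is an error, not merely an omission.
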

\begin{proof}
The equivalence of nonimmersability of $G$ with the condition
$M\not\equiv 0$, as well as the upper bound has been established in
Example \ref{ex1}, see (\ref{pies}) and (\ref{ex_conclusion}). Hence
it remains to prove the lower bound of (\ref{glupiages}).
Recall that the minimizing isometric immersion of $G_{2\times 2}$ here
is $y(x') = (x',0)$, whereas other quantities in the proofs of Theorems
\ref{thm2} and \ref{thm3} are:
\begin{equation*}
\begin{split}
&\vec b=\sqrt{\lambda}e_3, \qquad Q=A=\mathrm{diag} (1,1,
\sqrt{\lambda})\\
&\vec d=(-\frac{\partial_1\lambda}{2}, 
-\frac{\partial_2\lambda}{2}, 0)^T, \qquad 
B(x') = \left[\begin{array}{ccc} \partial_1\vec b, \partial_2\vec b,
    \vec d\end{array}\right] = \left[\begin{array}{ccc} 0 & 0 & -\frac{\partial_1\lambda}{2}\\
0 & 0 & -\frac{\partial_2\lambda}{2}\\
\frac{\partial_1\lambda}{2\sqrt{\lambda}} &
\frac{\partial_2\lambda}{2\sqrt{\lambda}} & 0\end{array}\right].
\end{split}
\end{equation*}

\medskip

{\bf 1.}  Let $u^h\in W^{1,2}(\Omega^h, \mathbb{R}^3)$ be any sequence of
deformations, satisfying:
\begin{equation}\label{ges2}
E^h(u^h)\leq Ch^4.
\end{equation}
Consider a smooth diffeomorphism $\phi:\Omega^{h_0}\to\mathbb{R}^3$ (for
$h_0>0$ sufficiently small) given as in (\ref{koko}):
$$ \phi(x_1, x_2, x_3) = (x_1 - \frac{x_3^2}{4}\partial_1\lambda,~ x_2 - \frac{x_3^2}{4}\partial_2\lambda, 
~\sqrt{\lambda}x_3)^T.$$
We will write $\mathcal{U}^h = \phi(\Omega^h)$. Since $\nabla \phi = Q+x_3 B
- \frac{x_3^2}{4} (\nabla^2\lambda)^*$, it follows that:
$$(\nabla \phi) A^{-1} = \mbox{Id}_3 + x_3 BA^{-1} - \frac{x_3^2}{4} (\nabla^2\lambda)^*.$$
Further, noting that $BA^{-1}\in so(3)$, by polar decomposition
theorem we get:
$$\exists R(x)\in SO(3)\qquad 
(\nabla \phi) A^{-1} = \sqrt{(\nabla \phi) A^{-1} ((\nabla \phi) A^{-1})^T }
R(x) 
= (\mbox{Id}_3 + \mathcal{O}(x_3^2)) R(x). $$
Then:
\begin{equation}\label{gesa}
\begin{split}
E^h(u^h) & \geq c\frac{1}{h}\int_{\Omega^h} \mbox{dist}^2\left((\nabla
  u^h) A^{-1}, SO(3)\right)~\mbox{d}x \\
& = c\frac{1}{h}\int_{\mathcal{U}^h} \mbox{dist}^2\left(\nabla
  (u^h\circ \phi^{-1}) (\nabla h A^{-1}), SO(3)\right) |\det \nabla
h|^{-1}~\mbox{d}z \\
& \geq c\frac{1}{h}\int_{\mathcal{U}^h} \mbox{dist}^2\left(\nabla
  (u^h\circ \phi^{-1}), SO(3)\right) ~\mbox{d}z + \mathcal{O}(h^4),
\end{split}
\end{equation}
where we used that:
$$\mbox{dist}^2\left(F(\mbox{Id}_3+\mathcal{O}(x_3^2)), SO(3)\right)
\geq c~\mbox{dist}^2\left(F,
  SO(3)(\mbox{Id}_3+\mathcal{O}(x_3^2))^{-1}\right) \geq
c~\mbox{dist}^2\left(F, SO(3)\right) + \mathcal{O}(x_3^4).$$
By the geometric rigidity estimate \cite{FJMhier}, and since $\det\nabla \phi$ is uniformly bounded away
from $0$ in $\Omega$, the estimate (\ref{gesa}) becomes:
\begin{equation}\label{gesb}
\begin{split}
E^h(u^h) & \geq c_h\frac{1}{h}\int_{\mathcal{U}^h} \left |\nabla
  (u^h\circ \phi^{-1}) - R_h\right|^2 ~\mbox{d}z + \mathcal{O}(h^4) \geq 
c_h\frac{1}{h}\int_{\Omega^h} \left |\nabla u^h - R_h\nabla \phi\right|^2
+ \mathcal{O}(h^4) \\ & \geq c_h\frac{1}{h}\int_{\Omega^h} \left
  |\nabla u^h - R_h(Q+x_3B)\right|^2 ~\mbox{d}x + \mathcal{O}(h^4), 
\end{split}
\end{equation}
for some $R_h\in SO(3)$ and the constants $c_h$ depending on the
domains $\mathcal{U}^h$. However, if we replace the integration on
$\Omega^h$ in (\ref{gesa}), (\ref{gesb}), by integration on a small
cube $(-\frac{h}{2}, \frac{h}{2})^3$, all the subsequent constants,
including $c_h$ from the geometric rigidity estimate, will be uniform
and independent of $\phi$. This leads, by the well known technique of
approximation \cite{FJMhier, lemapa1, lepa}, to the following result:

\begin{lemma}\label{krowa}
Assume (\ref{ges2}). There exists a sequence of rotation fields
$R^h\in W^{1,2}(\Omega, SO(3))$, with:
\begin{equation}\label{gesp}
\frac{1}{h} \int_{\Omega^h} |\nabla u^h - R^h(Q+x_3 B)|^2~\mathrm{d}x
\leq Ch^4 \quad \mbox{ and } \quad \int_\Omega |\nabla R^h|^2\leq Ch^2.
\end{equation}
\end{lemma}
The proof of Lemma \ref{krowa}  reproduces the lines of Theorem 1.6 in
\cite{lemapa1}, hence we omit it. Note that the above is parallel to Lemma \ref{approx}, now with an
improved accuracy of the error bound due to the smaller scaling of
the energy in (\ref{ges2}). With the help of (\ref{gesp}) we now
derive the limiting properties of the sequence $u^h$.

\medskip

{\bf 2.} Define $\bar R^h=\mathbb{P}_{SO(3)} \fint_{\Omega^h} (\nabla
u^h)Q^{-1}~\mbox{d}x$. Since: 
$\mbox{dist}^2(\fint (\nabla u^h)Q^{-1}, SO(3)) \leq |\fint (\nabla
u^h)Q^{-1} - R^h(x')|^2$ for every $x'\in\Omega$, it follows upon integrating and using the Poincar\`e
inequality, together with (\ref{gesp}), that the projection in $\bar
R^h$ is well defined. Indeed:
\begin{equation*}
\begin{split}
\mbox{dist}^2(\fint_{\Omega^h} (\nabla u^h)Q^{-1}, SO(3)) & \leq 
\fint |\fint (\nabla u^h)Q^{-1}~\mbox{d}x - R^h(x')|^2~\mbox{d}x' \\ &
\leq C\left(  \fint |\fint (\nabla u^h)Q^{-1} - \fint R^h|^2 +   \fint |R^h - \fint R^h|^2\right)
\\ & \leq C\left(  \fint_{\Omega^h} | (\nabla u^h)Q^{-1} - R^h|^2 +   \fint_{\Omega^h}
  |\nabla R^h|^2\right)\leq Ch^2.
\end{split}
\end{equation*}
In a similar manner as above and again by (\ref{gesp}), we observe that:
\begin{equation}\label{gesc}
\begin{split}
& \int_\Omega |R^h - \bar R^h|^2 \leq \int_\Omega |R^h - \fint (\nabla
u^h)Q^{-1} |^2 \leq Ch^2, \\ &
\|(\bar R^h)^T R^h - \mbox{Id}_3\|^2_{W^{1,2}(\Omega)} \leq Ch^2.
\end{split}
\end{equation}
In particular, we have the following convergence to some matrix field
$S$ on $\Omega^1$, up to a subsequence: 
\begin{equation}\label{gesk}
\frac{1}{h} \big((\bar R^h)^T R^h - \mbox{Id}_3\big) \rightharpoonup S
\qquad \mbox{ weakly in } W^{1,2}(\Omega^1, \mathbb{R}^{3\times 3}),
\end{equation}

\medskip

{\bf 3.} Define the renormalised deformations $y^h\in W^{1,2}(\Omega^1,
\mathbb{R}^3)$ by $y^h(x', x_3) = (\bar R^h)^T u^h(x', hx_3) - c^h$,
where $c^h$ is a constant vector such that by the Poincar\`e inequality
and in view of:
$$\int_{\Omega^1} |(\bar R^h)^T\nabla u^h(x', hx_3) - Q|^2 \leq
C\frac{1}{h} \int_{\Omega^h} |\nabla u^h - \bar R^h Q|^2 \leq Ch^2,$$
there holds:
$$ y^h \to y \quad \mbox{ and } \quad \frac{1}{h} \partial_3y^h \to
\vec b \qquad \mbox{ in } W^{1,2}(\Omega^1, \mathbb{R}^3).$$ 
The above is, naturally, consistent with  the results of Theorem \ref{thm2} (i).
We now perform the analysis similar to step 5 in the proof of Theorem
\ref{thm2}. Define the rescaled strains:
$$\mathcal{G}^h(x', x_3) = \frac{1}{h^2} \Big((R^h)^T \nabla u^h(x',
hx_3) - (Q+hx_3B)\Big)A^{-1}.$$
Note that in view of (\ref{gesp}), $\{\mathcal{G}^h\}$ is uniformly
bounded in $L^2$, so that up to a subsequence:
\begin{equation}\label{mewa}
\mathcal{G}^h \rightharpoonup \mathcal G \quad \mbox{ weakly in } L^2(\Omega^1, \mathbb{R}^{3\times 3}).
\end{equation}

Consider the vector fields:
\begin{equation*}
\begin{split}
\frac{1}{h^2}&\left( \partial_3 y^h - h(\vec b + hx_3\vec d)\right)  = 
\frac{1}{h}\left( (\bar R^h)^T\nabla u^h(x', hx_3) - (Q +
  hx_3B)\right)e_3 \\ & \qquad = 
\frac{1}{h}(\bar R^h)^T\left(\nabla u^h(x', hx_3) - R^h(Q +
  hx_3B)\right)e_3  + \frac{1}{h}\left( (\bar R^h)^TR^h - \mbox{Id}\right)(Q + hx_3B)e_3. 
\end{split}
\end{equation*}
The first term in the right hand side above converges in $L^{2}(\Omega^1)$ to $0$ by
(\ref{gesp}), while the second term converges to $SQe_3 = S\vec b$ by
(\ref{gesk}).
Hence we observe the same convergence of the difference quotients:
\begin{equation*}
\begin{split}
f^{s,h}(x', x_3) & = \frac{1}{h^2}\frac{1}{s} \left(y^h(x', x_3+s) -
  y^h(x', x_3) - hs(\vec b + \big(hx_3 + \frac{hs}{2})\vec d\big)\right) \\ & = 
\frac{1}{h^2} \fint_0^s \partial_3 y^h(x', x_3+t) - h\big(\vec b + h(x_3 + t)
\vec d\big)~\mbox{d}t,
\end{split}
\end{equation*}
and of their $\partial_3$ derivatives, namely:
\begin{equation*}
f^{s,h} \to S\vec b \quad \mbox{ and } \quad 
\partial_3 f^{s,h} \to 0\qquad \mbox{ in } L^{2}(\Omega^1, \mathbb{R}^3).
\end{equation*}
Regarding the in-plane derivatives, for $i=1,2$ we have:
\begin{equation*}
\begin{split}
\partial_i &f^{s,h}(x', x_3) \\ & = \frac{1}{h^2s}\Big((\bar R^h)^T\partial_i u^h(x', hx_3+hs) -
(\bar R^h)^T\partial_i u^h(x', hx_3) - hs\big(\partial_i\vec b +
h(x_3+\frac{s}{2})\partial_i\vec d\big)\Big)\\ & 
= \frac{1}{s}\Big((\bar R^h)^T R^h \mathcal{G}^h(x', x_3+s) - (\bar
R^h)^T R^h \mathcal{G}^h(x', x_3) \Big)e_i \\ & \qquad\qquad
\qquad\qquad  \qquad\qquad + \frac{1}{h} \big((\bar
R^h)^T R^h - \mbox{Id}_3\big)B e_i - (x_3+\frac{s}{2})\partial_i \vec d.
\end{split}
\end{equation*}
Therefore, by (\ref{mewa}) and (\ref{gesk}):
\begin{equation*}
\partial_i f^{s,h} \rightharpoonup  \frac{1}{s}\Big(\mathcal{G}(x', x_3+s)
- \mathcal{G}(x', x_3) \Big)e_i + SBe_i - (x_3+\frac{s}{2})\partial_i
\vec d \qquad  \mbox{ weakly in } L^{2}(\Omega^1, \mathbb{R}^3).
\end{equation*}
Concluding, $f^{s,h}$ converges weakly (up to a subsequence) in
$W^{1,2}(\Omega^1)$ to $S\vec b$, and the limit in the right hand side
above must coincide with $\partial_i (S\vec b)$, yielding:
\begin{equation}\label{gest}
\mathcal{G}(x', x_3)e_i - \mathcal{G}(x',0)e_i = x_3\big( \partial_i(S\vec
b) - SBe_i\big) + \frac{x_3^2}{2} \partial_i\vec d \qquad \mbox{ for } i=1,2.
\end{equation}

\medskip

{\bf 4.} We now estimate the rescaled energies, as desired.
Firstly, observe that for $(x', x_3)\in\Omega^1$:
$$W\big((\nabla u^h(x', hx_3)) A^{-1}\big) = W\big((R^h(x'))^T (\nabla u^h(x', hx_3)) A^{-1}\big) =
W\big(\mbox{Id}_3 + h^2\mathcal{G}^h(x', x_3) + hx_3 B(x')A^{-1}\big).$$
Since $BA^{-1}\in so(3)$, it follows by frame invariance that:
\begin{equation*}
\begin{split}
W\big((\nabla &u^h(x', hx_3)) A^{-1}\big)  = W\Big( (e^{-hx_3
  BA^{-1}})(\mbox{Id}_3 + h^2\mathcal{G}^h(x', x_3) + hx_3
B(x')A^{-1})\Big) \\ & = W\Big( \big(\mbox{Id}-
hx_3BA^{-1}+\frac{h^2x_3^2}{2} (BA^{-1})^2 +
\mathcal{O}(h^3)\big)\big(\mbox{Id}_3 + h^2\mathcal{G}^h(x', x_3) + hx_3 
B(x')A^{-1}\big)\Big) \\ & = W\Big( \mbox{Id}_3 + h^2\mathcal{G}^h 
- \frac{h^2x_3^2}{2} (BA^{-1})^2 + \mathcal{O}(h)h^2\mathcal{G}^h +\mathcal{O}(h^3)\Big).
\end{split}
\end{equation*}
Define the sets $\Omega_h = \{x\in\Omega^1;~ h|\mathcal{G}^h(x)|\leq 1\}$.
Using Taylor's expansion, we now obtain:
\begin{equation*}
\begin{split}
\liminf_{h\to 0}\frac{1}{h^4}E^h(u^h) & \geq \liminf_{h\to
  0}\frac{1}{h^4} \int_{\Omega^1} \chi_{\Omega_h} W\Big( \mbox{Id}_3 + h^2\mathcal{G}^h 
- \frac{h^2x_3^2}{2} (BA^{-1})^2 + \mathcal{O}(h^3)\Big)~\mbox{d}x 
\\ & = \liminf_{h\to 0}\frac{1}{2}\int_{\Omega^1} \mathcal{Q}_3\Big(
\chi_{\Omega_h}(\mathcal{G}^h - \frac{x_3^2}{2}
(BA^{-1})^2 \Big) \geq \frac{1}{2}\int_{\Omega^1} \mathcal{Q}_3\Big(
\mathcal{G} - \frac{x_3^2}{2} (BA^{-1})^2 \Big)  \\ & 
\geq \frac{1}{2}\int_{\Omega^1} \mathcal{Q}_2\Big(
\mathcal{G}(x', x_3)_{2\times 2} + \frac{x_3^2}{2}
\frac{1}{4\lambda}\nabla\lambda\otimes \nabla\lambda\Big) ~\mbox{d}x,
\end{split}
\end{equation*}
where we used the weak sequential lower-semincontinuity of the quadratic form
$\mathcal{Q}_3$ in $L^2$, and since
by (\ref{mewa}) $\chi_{\Omega_h} \to 1$ in $L^1(\Omega^1)$, 
resulting in $\chi_{\Omega_h} \mbox{sym } \mathcal{G}^h \rightharpoonup
\mbox{sym } \mathcal{G}$ weakly in $L^2(\Omega^1)$. 

By (\ref{gest}) we hence arrive at:
\begin{equation*}
\begin{split}
\liminf_{h\to 0}&\frac{1}{h^4}E^h(u^h)  
\geq \frac{1}{2}\int_{\Omega^1} \mathcal{Q}_2\Big(
\mathcal{G}(x',0)_{2\times 2} + x_3 \big(\nabla(S\vec b)
- SB\big)_{2\times 2} - \frac{x_3^2}{4} M\Big) ~\mbox{d}x \\ &
\geq \frac{1}{2}\int_{\Omega^1} \mathcal{Q}_2\Big(
\mathcal{G}(x',0)_{2\times 2} - \frac{x_3^2}{4} M\Big) ~\mbox{d}x \\ &
= \frac{1}{2}\Big(\int_{\Omega^1} \mathcal{Q}_2(
\mathcal{G}(x',0)_{2\times 2}) - \int_{\Omega^1}
\frac{x_3^2}{2}\left\langle {L}_2\big(\mathcal{G}(x',0)_{2\times 2}\big) :
M\right\rangle + \int_{\Omega^1} \frac{x_3^4}{16}\mathcal{Q}_2(M)\Big)
\\ & = \frac{1}{2}\Big(\int_{\Omega} \mathcal{Q}_2(
\mathcal{G}(x',0)_{2\times 2}) - \frac{1}{24}\int_{\Omega}
\left\langle {L}_2\big(\mathcal{G}(x',0)_{2\times 2}) : M\right\rangle
+ \frac{1}{16}(\frac{1}{144} + \frac{1}{180}) \int_{\Omega}
\mathcal{Q}_2(M)~\mbox{d}x' \Big) \\ &
= \frac{1}{2}\Big(\int_{\Omega} \mathcal{Q}_2\big(
\mathcal{G}(x',0)_{2\times 2} - \frac{1}{48} M\big) ~\mbox{d}x'
+ \frac{1}{16}\frac{1}{180} \int_{\Omega} \mathcal{Q}_2(M)~\mbox{d}x'
\Big)\\ & \geq c \int_{\Omega} \mathcal{Q}_2(M)~\mbox{d}x'.
\end{split}
\end{equation*}
Above, $L_2:\mathbb{R}^{2\times 2}\to\mathbb{R}^{2\times 2}$ stands
for  the linear map with the property: $\mathcal{Q}_2(F) = \langle L_2(F)
: F\rangle$ and  $\langle L_2(F) : \tilde F\rangle = \langle
L_2(\tilde F) : F\rangle$ for all $F, \tilde F\in \mathbb{R}^{2\times 2}.$
The proof of Theorem \ref{optimal} is now complete in view of the nondegeneracy of
the quadratic form $\mathcal{Q}_2$ on  symmetric matrices.
\end{proof}
\vspace{\baselineskip}

\begin{center}
{\bf Part C: Application}
\end{center}

\section{Application to liquid crystal glass }\label{lg}

Nematic liquid crystals elastomers are rubber-like, cross-linked,
polymeric solids, which have both positional elasticity, due to
rubber-like, solid response of the polymer chains, 
and the  orientation elasticity due to the separately deforming director.
A nematic liquid crystal glass is a very highly
cross-linked nematic liquid crystal elastomer such 
that the director is effectively constrained to move with the liquid
crystal elastomer matrix. 

In this section we consider a model of nematic glass \cite{Warner, MWB} whose
referential conformation $A$ corresponds to a prolate
ellipsoid, elongating the eigenvector $\vec n$ by the factor
$\lambda$, while shrinking the invariant $2$d subspace $\vec n^\perp =
\mbox{span}(v, w)$ by factor $\lambda^\nu$:
$$A= \lambda^{-\nu} v\otimes v + \lambda^{-\nu} w\otimes w + \lambda \vec n\otimes \vec n =
\lambda^{-\nu}\big(\mbox{Id}_3 + (\lambda^{\nu+1}-1)\vec n\otimes \vec
n\big), \qquad \lambda>1, \quad |\vec n|=1.$$
In other circumstances,  $A$ corresponds to a contraction $\lambda$ in
direction of $\vec n$ and an expansion $\lambda^{-\nu}$ in the
perpendicular directions, and so $\lambda$ could also be 
less than 1 \cite{modes2011blueprinting}. The coefficient $\nu$
is experimentally verified to be in the range $\frac{1}{2}<\nu<2$.
Setting $r=\lambda^{\nu +1}$, and writing $\lambda^{-\nu} =
r^\delta$ with $\delta= -\frac{\nu}{\nu+1}$, we obtain
the metric $G$ and its  symmetric square root $A=\sqrt{G}$ given by:
\begin{equation}\label{lc}
G(x', x_3) =G(x') = r^{2\delta} (\mbox{Id}_3 + (r^2-1)\vec n\otimes
\vec n), \qquad A(x') = r^\delta (\mbox{Id}_3 + (r-1)\vec n\otimes \vec n).
\end{equation}

\medskip

We start by the following observation:

\begin{theorem}\label{lem71}
Assume that: 
\begin{equation}\label{flat}
\vec n \in S^1 \quad \mbox{i.e. } \vec n = (n_1, n_2, 0)^T\in S^2,
\mbox{ with } n=(n_1, n_2)\in S^1.
\end{equation}
Then the following conditions are equivalent:
\begin{itemize}
\item[(i)] the metric $G$ as in (\ref{lc}) is immersable, i.e. $G=(\nabla u)^T\nabla u$ for
some smooth $u:\Omega^1\rightarrow \mathbb{R}^3$,
\item[(ii)] the Gaussian curvature $\kappa (\mathrm{Id}_2 + (r^2-1) n\otimes n)$ vanishes
identically in $\Omega$,
\item[(iii)] $\mathrm{curl}^T\mathrm{curl }~ G_{2\times 2} = 0$,
\item[(iv)] the following curvatures of $G$ vanish: $R^3_{112} =
  R^3_{221} = R_{1212} = 0$.
\end{itemize}
\end{theorem}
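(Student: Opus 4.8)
Throughout, recall that $r$ and $\delta$ are (constant) material parameters, while the director field $\vec n=\vec n(x')$ is inhomogeneous. Under the in-plane assumption (\ref{flat}) one has $\vec n\otimes\vec n=(n\otimes n)^*$, so the metric (\ref{lc}) is block-diagonal: $G_{13}=G_{23}=0$, $G_{33}=r^{2\delta}$ is a positive constant, and $G_{2\times 2}=r^{2\delta}\big(\mathrm{Id}_2+\gamma\,n\otimes n\big)$ with $\gamma:=r^2-1$. By the matrix-determinant lemma and $|n|=1$, $\det G_{2\times 2}=r^{4\delta}(1+\gamma|n|^2)=r^{4\delta+2}$ is a positive constant. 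Since in addition $\partial_3 G=0$, a short inspection of the Christoffel formula shows that \emph{every} symbol $\Gamma^a_{bc}$ of $G$ carrying an index equal to $3$ vanishes, and that $\Gamma^a_{bc}$ with $a,b,c\in\{1,2\}$ are exactly the Christoffel symbols of $G_{2\times 2}$; thus $G$ is, up to the constant rescaling $x_3\mapsto r^\delta x_3$, the Riemannian product of $(\Omega,G_{2\times 2})$ with an interval. The plan is to transfer each of (i)--(iv) to this two-dimensional factor.

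\emph{(iv) $\Leftrightarrow$ (ii).} Because the index-$3$ Christoffel symbols drop out of the curvature formulas, $R^3_{ijk}\equiv 0$ for all $i,j,k$ --- in particular $R^3_{112}=R^3_{221}=0$ hold automatically --- while $R_{1212}=G_{11}r^1_{212}+G_{12}r^2_{212}=\kappa(G_{2\times 2})\det G_{2\times 2}$, where $r^m_{ijk}$ denote the curvatures of $G_{2\times 2}$. Since $\det G_{2\times 2}>0$, condition (iv) is equivalent to $\kappa(G_{2\times 2})\equiv 0$, hence --- as the Gaussian curvature is multiplied by $r^{-2\delta}$ under the constant rescaling $G_{2\times 2}=r^{2\delta}(\mathrm{Id}_2+\gamma\,n\otimes n)$ --- to $\kappa(\mathrm{Id}_2+\gamma\,n\otimes n)\equiv 0$, which is (ii). (Alternatively, one invokes Theorems \ref{th12} and \ref{th23}: since $\Gamma^3_{ij}=0$ for $i,j\in\{1,2\}$, the prescription (\ref{zero6}) reduces to $\Pi\equiv 0$, so Theorem \ref{th12}(iii) holds iff $G_{2\times 2}$ admits a flat isometric immersion, i.e. iff $\kappa(G_{2\times 2})\equiv 0$, and by Theorem \ref{th23} this is also $\min\mathcal{I}_G=0$.)

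\emph{(ii) $\Leftrightarrow$ (iii).} The one non-bookkeeping ingredient is the fact that a two-dimensional metric $g$ of constant determinant obeys the simplified Gauss formula $\kappa(g)=-\tfrac{1}{2\det g}\,\mathrm{curl}^T\mathrm{curl}\,g$. I would establish this by a direct computation --- e.g. from the Brioschi formula, observing that the terms built out of $\nabla(\det g)$ disappear once $\det g$ is constant; or by first rescaling $g$ to be unimodular and noting that the exact and linearized curvature expressions then coincide. Applied to $G_{2\times 2}$, whose determinant is a positive constant, this yields $\kappa(G_{2\times 2})\equiv 0$ if and only if $\mathrm{curl}^T\mathrm{curl}\,G_{2\times 2}\equiv 0$; combined with the previous paragraph, this is (ii) $\Leftrightarrow$ (iii). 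I expect this constant-determinant curvature identity to be the step that most needs care.

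\emph{(i) $\Leftrightarrow$ (ii).} If (ii) holds, then $r^{-2\delta}G_{2\times 2}$ is a flat metric on the simply connected $\Omega$, hence admits a smooth isometric immersion $\zeta:\Omega\to\mathbb{R}^2$; then $u(x',x_3):=r^\delta\big(\zeta(x'),x_3\big)$ is smooth on $\Omega^1$ and a direct check gives $(\nabla u)^T\nabla u=G$, so $G$ is immersible. Conversely, any $u:\Omega^1\to\mathbb{R}^3$ with $(\nabla u)^T\nabla u=G$ is a local diffeomorphism, so $G$ is a flat metric; by the product structure established above, its $\Omega$-factor $G_{2\times 2}$ is then flat as well, i.e. $\kappa(G_{2\times 2})\equiv 0$, which is (ii). Concatenating the three equivalences closes the cycle among (i)--(iv).
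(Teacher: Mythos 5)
Your reductions of (i) and (iv) to (ii) are correct and in essence identical to the paper's: the block-diagonal structure with $G_{33}$ constant makes every Christoffel symbol of $G$ carrying an index $3$ vanish, so $R^3_{ijk}\equiv 0$ and $R_{1212}=\kappa(G_{2\times 2})\det G_{2\times 2}$, while the forward/backward immersion argument for (i) is exactly the paper's. The gap is where you predicted it: the proposed \emph{constant-determinant curvature identity} $\kappa(g)=-\frac{1}{2\det g}\,\mathrm{curl}^T\mathrm{curl}\,g$ for 2d metrics with $\det g\equiv\mathrm{const}$ is \textbf{false}. Take for instance
\begin{equation*}
g(x_1,x_2)=\begin{pmatrix}1+x_1 & x_2 \\[2pt] x_2 & \dfrac{1+x_2^2}{1+x_1}\end{pmatrix},\qquad \det g\equiv 1.
\end{equation*}
At the origin $\mathrm{curl}^T\mathrm{curl}\,g = g_{11,22}-2g_{12,12}+g_{22,11}=0-0+2=2$, so your formula predicts $\kappa(0,0)=-1$; but the Brioschi formula (or a direct Christoffel computation) gives $\kappa(0,0)=-\tfrac{3}{2}$. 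The quadratic-in-$\nabla g$ terms in the Brioschi expansion do \emph{not} vanish under the sole hypothesis $\det g=\mathrm{const}$, and consequently neither of your two suggested routes (dropping the $\nabla(\det g)$ terms in Brioschi; arguing that the exact and linearized curvature coincide after a unimodular rescaling) can be completed.

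What actually makes the identity true here is the specific structure $g=\mathrm{Id}_2+\gamma\,n\otimes n$ with $|n|\equiv 1$ and $\gamma$ \emph{constant}, and this is what the paper exploits: it writes out the two Brioschi determinants for this $g$, observes that all terms of order $\gamma^2$ and $\gamma^3$ cancel identically (a cancellation that uses $n=(\cos\theta,\sin\theta)$, i.e.\ the unit-norm relations $n_1 n_{1,j}+n_2 n_{2,j}=0$), and is left with $(\det g)^2\kappa=(1+\gamma)q$ where $q=-\tfrac12\mathrm{curl}^T\mathrm{curl}\,g$. This yields $\kappa=-\frac{\gamma}{2(1+\gamma)}\mathrm{curl}^T\mathrm{curl}(n\otimes n)$, whence (ii) $\Leftrightarrow$ (iii) since $\mathrm{curl}^T\mathrm{curl}\,G_{2\times2}=r^{2\delta}\gamma\,\mathrm{curl}^T\mathrm{curl}(n\otimes n)$ and $\gamma\neq 0$. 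To repair your write-up, you must replace the constant-determinant lemma with this structure-specific Brioschi computation; the rest of your argument can then stand.
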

\begin{proof}
It is clear that (i) holds iff the Riemann tensor of $G$ vanishes, which is equivalent to:
$$ \kappa = \kappa (\mbox{Id}_2 + (r^2-1) n \otimes n)=0.$$
We now calculate the Gaussian curvature $\kappa$ of the $2$d metric
$ \mathrm{Id}_2 + (r^2-1) n \otimes n$ and prove that:
$$ \kappa = r^{2\delta} \kappa(G_{2\times2})  = \frac{1}{2} (r^{-2} -1) \mbox{curl}^T\mbox{curl }
(n\otimes n).$$
This will achieve the lemma, because $R^3_{112}=R^3_{221}=0$
automatically, while $R_{1212}=0$ is equivalent to (ii).
We write $r^2-1=\gamma > 0$ and compute:
\begin{equation*}
\begin{split}
\big(\det (\mathrm{Id}_2+ \gamma n \otimes n)\big)^2 \kappa = &
\det\left[\begin{array}{ccc} q & \frac{1}{2} e_{,1} & f_{,1} - \frac{1}{2}e_{,2}  \\
f_{,2} - \frac{1}{2}g_{,1} & e & f \\
\frac{1}{2}g_{,2} & f & g \end{array}\right] \\
& - \det \left[\begin{array}{ccc} 0 & \frac{1}{2} e_{,2} & g_{,1}  \\
\frac{1}{2} e_{,2} - \frac{1}{2}g_{,1} & e & f \\
\frac{1}{2}g_{,1} & f & g \end{array}\right],
\end{split}
\end{equation*}
where $q= -\frac{1}{2} e_{,22} + f_{,12} - \frac{1}{2}g_{,11}$ and
$e=1+\gamma n_1^2$, $f=\gamma n_1n_2$, $g=1+\gamma n_2^2$.
A direct calculation now gives that the right hand side above equals:
\begin{equation*}
\begin{split}
(1+\gamma)q + \gamma^3 \cdot 0 + \gamma^2 (& -n_2^2 n_{1,1} n_{2,2} -
n_1n_2n_{2,1} n_{2,2} + n_1n_2 n_{1,2} n_{2,2} - n_1n_2 n_{1,1}
n_{1,2} \\
& - n_1^2 n_{1,1} n_{2,2}  + n_1n_2 n_{1,1} n_{2,1} +
n_2^2n_{2,1}^2 + n_1^2 n_{1,2}^2) = (1+\gamma)q.
\end{split}
\end{equation*}
The equality above follows since all the terms in the bracket
multiplying $\gamma^2$ cancel out. 
This can be easily seen by substituting $(n_1, n_2) = (\cos\theta, \sin \theta)$
for some angle function $\theta:\Omega\rightarrow \mathbb{R}$.

We hence see that $\kappa = 0$ iff $q=0$. But note that:
$$q= -\frac{\gamma}{2} \big( (n_1^2)_{,22} - 2(n_1n_2)_{,12} +
(n_2^2)_{,11}\big) = -\frac{\gamma}{2} \mbox{curl}^T \mbox{curl} (n\otimes n).$$
Since $\det (\mathrm{Id}_2+ (r^2-1) n \otimes n) = 1+\gamma$, it
follows that $\kappa = -\frac{1}{2} \frac{\gamma}{\gamma+1}
\mbox{curl}^T \mbox{curl} (n\otimes n).$ 
\end{proof}

\begin{example}\label{kaushik}
In accordance with (\ref{lc}), the following metric has been put forward in \cite{MWB2} for the description
of disclination-mediated thermo-optical response in nematic glass sheets:
$G(x', x_3) = \alpha \mbox{Id}_3 + \beta \vec n(x') \otimes \vec
n(x')$, where $\alpha, \beta > 0$ are constants, and $\vec n$ is as in
(\ref{flat}) with:
$$n_1 = \cos (\theta + \psi),\quad n_2 = \sin (\theta + \psi), \qquad 
\theta = \arctan \frac{x_2}{x_1}, \quad \psi\equiv const.$$
Note that $\theta$ is the polar angle and so setting the constant $\psi = 0$
gives the radial pattern, while $\psi = \pi/2$ gives the azimuthal pattern, and
other values of $\psi$ yield spiral patterns.
It is easy to check that $\mbox{curl}^T\mbox{curl} (n\otimes n) =
0$. Therefore,  if the simply connected $\Omega$ does not contain $0$
(since $0$ is a singularity for $G$), then the metrics $G_{2 \times 2}$ and $G$ are
immersable by Lemma \ref{lem71}, and thus $\inf E^h = 0$.
However, one may not have a global immersion (implying hence a
higher energy scaling -- see \cite{MWB} for a construction with $h^2$ scaling) if $0\in \Omega$. 
\end{example}

\begin{remark}
Consider any 2d metric $G_{2\times 2}$ with constant eigenvalues $0<
\lambda_1\leq \lambda_2$: 
$$G_{2\times 2} = \lambda_1 v\otimes v + \lambda_2 w\otimes w = \lambda_1
(\mbox{Id}_2- \frac{\lambda_2-\lambda_1}{\lambda_1} w\otimes w). $$
We see that such $G_{2\times 2}$ is flat iff 
$\mbox{curl}^T \mbox{curl} (G_{2\times 2})=0$. Interestingly,
$\mbox{curl}^T \mbox{curl} $ is the leading order term in the
expansion of the Gaussian curvature of a $2$d metric at $\mbox{Id}_2$.
\end{remark}

In the $2$d case as in (\ref{flat}), we directly obtain:

\begin{theorem}
Assume that $G$ is as in (\ref{lc}) with (\ref{flat}).
Then, Theorems \ref{thm2} and \ref{thm3} hold with the Cosserat vector
$\vec b$ given by:
$$\vec b = r^\delta \vec N$$ 
and with the limiting functional:
\begin{equation*}\label{lqfun}
\begin{split}
\mathcal{I}_G(y) = \mathcal{I}_{\vec n}(y) & = 
 \frac{1}{24} \int_\Omega
\mathcal{Q}_2\left(x', r^{\delta} (\nabla y)^T\nabla \vec
  N\right)~\mathrm{d}x' \\ & = 
\frac{1}{24}\int_\Omega \mathcal{Q}_2^0\left(r^{\delta} (A_{2\times 2})^{-1} (\nabla y)^T \nabla \vec
  N (A_{2\times 2})^{-1} \right)~\mathrm{d}x'.
\end{split}
\end{equation*}
Denote:
$$\alpha=\frac{r-1}{r}.$$
Then: 
$$ (A_{2\times 2})^{-1}  = \frac{1}{r^{1+\delta}} \left(\mathrm{Id}_2 +
(r-1) n^\perp\otimes n^\perp\right) = \frac{1}{r^{\delta}}\left(\mathrm{Id}_2 -
\alpha ~ n\otimes n\right)$$
and the quadratic form in the second integrand in (\ref{lqfun}) can be equivalently expressed as:
\begin{equation*}
\begin{split}
r^{\delta} (A_{2\times 2})^{-1} &(\nabla y)^T \nabla \vec
  N (A_{2\times 2})^{-1} \\ & = r^{-\delta}\left( (\nabla y)^T\nabla \vec
    N - 2 \alpha \mathrm{sym}\big((n\otimes \partial_n y)\nabla \vec N\big)
  + \alpha^2 \left\langle \partial_n y, \partial_n\vec N\right\rangle n\otimes n\right).
\end{split}
\end{equation*}
Moreover, when $W$ is isotropic  so that (\ref{Qiso}) holds, we have:
\begin{equation}\label{nowe}
\forall F_{2\times 2}\in\mathbb{R}^{2\times 2}_{sym} \qquad 
\mathcal{Q}_2(x', F_{2\times 2}) =
\frac{1}{r^{4\delta}}\mathcal{Q}^0_{2, iso} \Big((\mathrm{Id}_2 -
\alpha ~ n\otimes n)F_{2\times 2}(\mathrm{Id}_2 -\alpha ~ n\otimes n)\Big)
\end{equation}
\end{theorem}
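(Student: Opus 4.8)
The plan is to specialize Theorems \ref{thm2} and \ref{thm3} to the metric in (\ref{lc}) under the planarity assumption (\ref{flat}), and then to carry out the rank-one algebra explicitly. Since $n_3=0$ we have $\vec n\otimes\vec n=(n\otimes n)^*$, so both $G$ and $A$ in (\ref{lc}) are block-diagonal: $G=G_{2\times2}^*+r^{2\delta}e_3\otimes e_3$ with $G_{2\times2}=r^{2\delta}(\mathrm{Id}_2+(r^2-1)n\otimes n)$, and $A=(A_{2\times2})^*+r^{\delta}e_3\otimes e_3$ with $A_{2\times2}=r^{\delta}(\mathrm{Id}_2+(r-1)n\otimes n)$. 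In particular $G_{13}=G_{23}=0$ and $\sqrt{\det G}/\sqrt{\det G_{2\times2}}=\sqrt{G_{33}}=r^{\delta}$, so formula (\ref{b}) reduces to $\vec b=r^{\delta}\vec N$ — exactly as in the Remark following Theorem \ref{thm2}, with $r^{2\delta}$ in place of $1$ — and Theorems \ref{thm2} and \ref{thm3} apply. Differentiating $\vec b=r^{\delta}\vec N$ and using $\langle\partial_j y,\vec N\rangle=0$, which annihilates the $\nabla r^{\delta}$ term, gives $(\nabla y)^T\nabla\vec b=r^{\delta}(\nabla y)^T\nabla\vec N$; substituting into (\ref{IG}) yields the first form of $\mathcal I_G$.

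For the inverse, I would use $(\mathrm{Id}_2+cP)^{-1}=\mathrm{Id}_2-\frac{c}{1+c}P$ for a projection $P$, applied with $P=n\otimes n$ and $c=r-1$: with $\alpha=(r-1)/r$ this gives $(A_{2\times2})^{-1}=r^{-\delta}(\mathrm{Id}_2-\alpha n\otimes n)$, which equals $r^{-(1+\delta)}(\mathrm{Id}_2+(r-1)n^\perp\otimes n^\perp)$ after writing $\mathrm{Id}_2-n\otimes n=n^\perp\otimes n^\perp$. To pass from the first form of $\mathcal I_G$ to the $\mathcal Q_2^0$-form (and to obtain (\ref{nowe})) I would invoke the identity $\mathcal Q_2(x',F_{2\times2})=\mathcal Q_2^0\big((A_{2\times2})^{-1}F_{2\times2}(A_{2\times2})^{-1}\big)$, with $\mathcal Q_2^0$ as in the Remark after Theorem \ref{thm2}: since $A^{-1}$ is block-diagonal, the $2\times2$ block of $A^{-1}\tilde FA^{-1}$ is forced to be $(A_{2\times2})^{-1}F_{2\times2}(A_{2\times2})^{-1}$ while the remaining entries run over all of $\mathbb{R}^3$ as $\tilde F$ ranges over matrices with $\tilde F_{2\times2}=F_{2\times2}$, so the minimization (\ref{cmin}) collapses to the one defining $\mathcal Q_2^0$; in the isotropic case $\mathcal Q_2^0=\mathcal Q^0_{2,iso}$ of (\ref{zero0}), and this is precisely Theorem \ref{isobest} with $\sqrt{G_{2\times2}}=A_{2\times2}$. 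Degree-two homogeneity of $\mathcal Q_2^0$ then extracts the scalar $r^{-4\delta}$ from the conjugation by $(A_{2\times2})^{-1}=r^{-\delta}(\mathrm{Id}_2-\alpha n\otimes n)$, giving (\ref{nowe}); applied with $F_{2\times2}=r^{\delta}(\nabla y)^T\nabla\vec N$ it gives the $\mathcal Q_2^0$-form of $\mathcal I_G$.

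Finally, I would expand $r^{\delta}(A_{2\times2})^{-1}(\nabla y)^T\nabla\vec N(A_{2\times2})^{-1}=r^{-\delta}(\mathrm{Id}_2-\alpha n\otimes n)(\nabla y)^T\nabla\vec N(\mathrm{Id}_2-\alpha n\otimes n)$ by multiplying out. The two cross terms $-\alpha(n\otimes n)(\nabla y)^T\nabla\vec N$ and $-\alpha(\nabla y)^T\nabla\vec N(n\otimes n)$ are mutual transposes — here one uses that $(\nabla y)^T\nabla\vec N=-\Pi$ is symmetric, which follows by differentiating $\langle\partial_iy,\vec N\rangle=0$ — and, since $(n\otimes n)(\nabla y)^T\nabla\vec N=(n\otimes\partial_ny)\nabla\vec N$, they combine to $-2\alpha\,\mathrm{sym}\big((n\otimes\partial_ny)\nabla\vec N\big)$; a direct double contraction gives the last term $\alpha^2\langle\partial_ny,\partial_n\vec N\rangle\,n\otimes n$. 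This reproduces the claimed expansion and completes the proof. The only steps that require any care are the block-diagonal reduction of the minimization (\ref{cmin}) to the problem defining $\mathcal Q_2^0$ — i.e. checking that with $A^{-1}$ block-diagonal the off-diagonal entries of $A^{-1}\tilde FA^{-1}$ are genuinely unconstrained — and the bookkeeping of the symmetry of $(\nabla y)^T\nabla\vec N$ when collecting the cross terms; neither is a substantive obstacle.
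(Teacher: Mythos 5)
Your proof is correct and, since the paper presents this theorem as a direct specialization without an explicit proof, it is essentially the computation the authors have in mind. All the key steps check out: the block-diagonal structure of $A$ under $n_3=0$, the reduction of (\ref{b}) to $\vec b=r^\delta\vec N$ via $G_{13}=G_{23}=0$ and $\sqrt{\det G/\det G_{2\times2}}=r^\delta$, the observation that $(\nabla y)^T\vec N=0$ kills the $\nabla r^\delta$ term so $(\nabla y)^T\nabla\vec b=r^\delta(\nabla y)^T\nabla\vec N$, the projection-inverse identity, the collapse of the minimization (\ref{cmin}) to $\mathcal Q_2^0$ under block-diagonal $A^{-1}$ (which is indeed the content of the Remark following Theorem \ref{thm2}, here with $\sqrt{G_{2\times2}}=A_{2\times2}$), the $r^{-4\delta}$ coming out by quadratic homogeneity, and the rank-one expansion. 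One cosmetic note: with the paper's sign convention $\Pi_{ij}=\langle\partial_i y,\partial_j\vec N\rangle$, you have $(\nabla y)^T\nabla\vec N=+\Pi$, not $-\Pi$; this has no effect on the symmetry argument or anything else.
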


\bigskip

We now turn to the general case of the general $3$d director $\vec n$.

\begin{theorem}\label{th3d}
Assume that $G$ is of the form (\ref{lc}). Let $n=(n_1,
n_2)\in \mathbb{R}^2$ denote the tangential component of the director
vector $\vec n$.  Denote also:
$$\gamma = \frac{1}{n_3^2+|n|^2r^2}.$$
Then Theorems \ref{thm2} and \ref{thm3} hold with the Cosserat vector $\vec b$ is given by:
\begin{equation}\label{lcb}
\vec b = (r^2-1)n_3\gamma (\partial_n y) +
r\sqrt{\gamma} (r^\delta\vec N)
= \frac{(r^2-1)n_3}{n_3^2 + |n|^2r^2} \partial_n y +
\frac{r^{1+\delta}}{\sqrt{n_3^2 + |n|^2r^2}}\vec N
\end{equation}
and with the limiting functional:
\begin{equation*}
\begin{split}
\mathcal{I}_G(y) = \mathcal{I}_{\vec n}(y) & = 
 \frac{1}{24} \int_\Omega
\mathcal{Q}_2\left(x', (\nabla y)^T\nabla \vec
  b\right)~\mathrm{d}x'.
\end{split}
\end{equation*}
Moreover, when $W$ is isotropic so that (\ref{Qiso}) holds, we have:
\begin{equation}\label{lcQ1}
\begin{split}
\forall F_{2\times 2}\in\mathbb{R}^{2\times 2}_{sym} \qquad 
& \mathcal{Q}_2(x', F_{2\times 2}) = \\
 & = \mu\frac{1}{r^{4\delta}}\Big(|F_{2\times 2}|^2 - 2\big((r^2-1)\gamma\big) |F_{2\times
    2} n|^2 + \big((r^2-1)\gamma\big)^2\langle F_{2\times 2} n, n\rangle^2\Big) \\ &
+ \frac{\lambda\mu}{\lambda+\mu} \frac{1}{r^{4\delta}}\Big(
\mathrm
{tr}F_{2\times 2} - \big((r^2-1)\gamma\big) \langle F_{2\times 2}n,  n\rangle\Big)^2.
\end{split}
\end{equation}
Setting $\tilde \gamma = \frac{1}{|n|^2} \left(1 - \sqrt{\gamma} \right)$,
the above formula is equivalent to:
\begin{equation}\label{lcQ2}
\mathcal{Q}_2(x', F_{2\times 2}) = 
\frac{1}{r^{4\delta}}\begin{cases}\mathcal{Q}^0_{2, iso}\Big( 
(\mathrm{Id}_2 -
\tilde \gamma ~ n\otimes n)F_{2\times 2}(\mathrm{Id}_2 -\tilde\gamma ~
n\otimes n)\Big)&\text{ if }n_3(x')^2<1,\\ 
\mathcal{Q}^0_{2, iso}(F_{2\times2})&\text{ if }n_3(x')^2=1.
\end{cases}
\end{equation}
\end{theorem}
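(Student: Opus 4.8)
The plan is to observe first that a metric of the form (\ref{lc}) is a smooth field of symmetric positive definite $3\times 3$ matrices that does not depend on $x_3$, so it satisfies the standing hypothesis (\ref{g}); consequently Theorems \ref{thm2} and \ref{thm3} apply without change, and the only task specific to the nematic glass setting is to make two objects explicit: the Cosserat vector $\vec b$ coming from formula (\ref{b}), and (in the isotropic case) the effective density $\mathcal{Q}_2$ coming from Theorem \ref{isobest}. Throughout I would use the two elementary identities $\det(\mathrm{Id}+a\otimes a)=1+|a|^2$ and the Sherman--Morrison formula $(\alpha\,\mathrm{Id}+a\otimes b)^{-1}=\frac1\alpha\mathrm{Id}-\frac{1}{\alpha(\alpha+\langle a,b\rangle)}\,a\otimes b$ (the latter already recorded in the proof of Lemma \ref{isotropic}), together with the single algebraic relation that drives all the simplifications: since $|\vec n|^2=|n|^2+n_3^2=1$, one has $1+(r^2-1)|n|^2=n_3^2+|n|^2r^2=\gamma^{-1}$, whence also $(r^2-1)\gamma|n|^2=1-\gamma$.

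Next I would compute the data entering (\ref{b}). From $G_{2\times2}=r^{2\delta}(\mathrm{Id}_2+(r^2-1)\,n\otimes n)$ and Sherman--Morrison, $(G_{2\times2})^{-1}=r^{-2\delta}(\mathrm{Id}_2-(r^2-1)\gamma\,n\otimes n)$; also $(G_{13},G_{23})^T=r^{2\delta}(r^2-1)n_3\,n$, so $(G_{2\times2})^{-1}(G_{13},G_{23})^T=(r^2-1)n_3\big(1-(r^2-1)\gamma|n|^2\big)\,n=(r^2-1)n_3\gamma\,n$ by that relation. Since $|\vec n|=1$ we get $\det G=r^{6\delta+2}$, while $\det G_{2\times2}=r^{4\delta}\gamma^{-1}$, hence $\sqrt{\det G}/\sqrt{\det G_{2\times2}}=r^{1+\delta}\sqrt\gamma$. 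Inserting these into (\ref{b}) and writing $\partial_n y=(\nabla y)n=n_1\partial_1y+n_2\partial_2y$ gives precisely (\ref{lcb}); the statement that Theorems \ref{thm2}, \ref{thm3} hold with this $\vec b$ and the stated limiting functional is then automatic.

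For the isotropic case I would start from Theorem \ref{isobest}, i.e. $\mathcal{Q}_2(x',F_{2\times2})=\mathcal{Q}^0_{2, iso}\big(\sqrt{G_{2\times2}}^{-1}F_{2\times2}\sqrt{G_{2\times2}}^{-1}\big)=\mu|E|^2+\frac{\lambda\mu}{\lambda+\mu}(\mathrm{tr}E)^2$ with $E=\sqrt{G_{2\times2}}^{-1}F_{2\times2}\sqrt{G_{2\times2}}^{-1}$, which is symmetric. By cyclicity of the trace $|E|^2=\langle(G_{2\times2})^{-1}F_{2\times2}(G_{2\times2})^{-1}:F_{2\times2}\rangle$ and $\mathrm{tr}E=\mathrm{tr}\big((G_{2\times2})^{-1}F_{2\times2}\big)$, so only $(G_{2\times2})^{-1}$, already in hand, is needed. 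Expanding $(\mathrm{Id}_2-\beta\,n\otimes n)F_{2\times2}(\mathrm{Id}_2-\beta\,n\otimes n)$ with $\beta=(r^2-1)\gamma$, contracting against $F_{2\times2}$, and using $\langle(n\otimes n)F_{2\times2}:F_{2\times2}\rangle=|F_{2\times2}n|^2$ and $\langle(n\otimes n)F_{2\times2}(n\otimes n):F_{2\times2}\rangle=\langle F_{2\times2}n,n\rangle^2$ produces exactly (\ref{lcQ1}). To obtain the equivalent form (\ref{lcQ2}) I would note that $\mathrm{Id}_2+(r^2-1)n\otimes n$ has eigenvalue $\gamma^{-1}$ on $\mathrm{span}(n)$ and $1$ on $n^{\perp}$, whence (when $|n|^2=1-n_3^2>0$) $\sqrt{G_{2\times2}}^{-1}=r^{-\delta}(\mathrm{Id}_2-\tilde\gamma\,n\otimes n)$ with $\tilde\gamma=\frac{1-\sqrt\gamma}{|n|^2}$; plugging this into Theorem \ref{isobest} and using the degree-two homogeneity of $\mathcal{Q}^0_{2, iso}$ gives the first branch of (\ref{lcQ2}). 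When $n_3(x')^2=1$ the tangential part vanishes, $G_{2\times2}=r^{2\delta}\mathrm{Id}_2$, $\sqrt{G_{2\times2}}^{-1}=r^{-\delta}\mathrm{Id}_2$, and one reads off the second branch.

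I expect the only real difficulty to be bookkeeping: correctly extracting the square root $\sqrt{G_{2\times2}}^{-1}=r^{-\delta}(\mathrm{Id}_2-\tilde\gamma\,n\otimes n)$, and keeping straight the three interrelated scalars $\gamma$, $(r^2-1)\gamma=\frac{1-\gamma}{|n|^2}$ and $\tilde\gamma=\frac{1-\sqrt\gamma}{|n|^2}$, all governed by $\gamma^{-1}=1+(r^2-1)|n|^2$ together with $|\vec n|=1$. There is no new analytic input, since all of the $\Gamma$-convergence and rigidity content is already packaged into Theorems \ref{thm2}, \ref{thm3} and \ref{isobest}.
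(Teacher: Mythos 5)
Your proposal is correct and follows essentially the same route as the paper: apply Theorems \ref{thm2}, \ref{thm3} and \ref{isobest} after computing $(G_{2\times 2})^{-1}$, $\det G$, $\det G_{2\times 2}$ and identifying $\sqrt{G_{2\times 2}}^{-1}=r^{-\delta}(\mathrm{Id}_2-\tilde\gamma\, n\otimes n)$. The only cosmetic difference is that you verify this square-root identity by diagonalizing on $\mathrm{span}(n)$ and $n^\perp$, while the paper checks it by squaring; both hinge on the same scalar relation $\gamma^{-1}=1+(r^2-1)|n|^2$.
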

\begin{proof}
We easily compute:
\begin{equation*}
\begin{split}
&\det G = r^{2+6\delta}, \qquad \det G_{2\times 2} = r^{4\delta}(r^2 -
(r^2-1)n_3^2) \\
&(G_{2\times 2})^{-1} = \frac{1}{r^{2\delta}}(n_3^2 +
|n|^2 r^2)^{-1}\left(\mbox{Id} - (r^2-1) n^\perp\otimes n^\perp\right),
\end{split}
\end{equation*}
where $n^\perp = (n_1, n_2)^\perp = (-n_2, n_1)$. Therefore:
$$(G_{2\times 2})^{-1}\left[\begin{array}{c}G_{13}\\G_{23}\end{array}\right] = 
\frac{(r^2-1)n_3}{n_3^2 + |n|^2 r^2} n$$
which implies the formula (\ref{lcb}).

To prove (\ref{lcQ2}) in view of Theorem \ref{isobest}, it is now enough to check
directly that the positive definite matrix $r^{-\delta}
\left(\mbox{Id}_2 - \tilde\gamma n\otimes n\right)$ equals
$\sqrt{G_{2\times 2}}^{-1}$.
Indeed:
\begin{equation*}
\begin{split}
\left(\mbox{Id}_2 - \tilde\gamma n\otimes n\right)^2
\left(\mbox{Id}_2 + (r^2-1) n\otimes n\right) & = 
\left(\mbox{Id}_2 +(\tilde\gamma |n|^2 - 2\tilde\gamma) n\otimes n\right)
\left(\mbox{Id}_2 + (r^2-1) n\otimes n\right) \\ & = 
\left(\mbox{Id}_2 - (r^2-1)\gamma n\otimes n\right)
\left(\mbox{Id}_2 + (r^2-1) n\otimes n\right) =\mbox{Id},
\end{split}
\end{equation*}
as $\tilde\gamma^2|n|^2 - 2\tilde\gamma = -(r^2-1)\gamma. $
\end{proof}

\begin{remark}
The expression in (\ref{nowe}) is consistent with  (\ref{lcQ2}), as
for $n_3=0$ it follows that $\gamma = \frac{r^2-1}{r^2}$ and
$\tilde\gamma = 1-1/r = \alpha$.
The expression in (\ref{nowe}) is also consistent with  Remark
  \ref{rem4.3}, in the following sence. Take $\vec n=e_3$. Then
  $D= r^{-2\delta} \mbox{diag}(1,1,r^{-1}) F^*_{2\times 2}
  \mbox{diag}(1,1,r^{-1}) =r^{-2\delta} F^*_{2\times 2}$.
Hence, by (\ref{rem}):
$$\mathcal{Q}_2(x', F_{2\times 2}) = \frac{1}{r^{4\delta}} \left(
\mu |F_{2\times 2}|^2 + \frac{\lambda\mu}{\lambda+\mu}
|\mbox{tr}F_{2\times 2}|^2 \right), $$
while  (\ref{lcQ1}), (\ref{lcQ2}) give the same formula directly.
\end{remark}

\medskip

\end{document}